\def\author#1{\gdef\autrun{\def\and{\unskip, }#1}\gdef\@author{#1}}
\newcommand{\rref}[2]{\hyperref[#2]{{#1}~\ref*{#2}}}
\newtheorem{theorem}{\sc Theorem}
\newtheorem{lemma}[theorem]{\sc Lemma}
\newtheorem{corollary}[theorem]{\sc Corollary}
\theoremstyle{definition}
\newtheorem{definition}[theorem]{\sc Definition}
\newtheorem{example}[theorem]{\sc Example}
\newtheorem{remark}[theorem]{\sc Remark}
\newcommand{\PG}{\mathrm{PG}}
\renewcommand{\P}{\mathcal{P}}
\renewcommand{\S}{\mathcal{S}}
\newcommand{\Q}{\mathcal{Q}}
\newcommand{\N}{\mathcal{N}}
\newcommand{\C}{\mathcal{C}}
\newcommand{\X}{\mathcal{X}}
\newcommand{\FF}{\mathbb{F}}
\newcommand{\CC}{\mathbb{C}}
\newcommand{\ZZ}{\mathbb{Z}}
\newcommand{\id}{I}
\renewcommand{\phi}{\varphi}
\begin{document}

\baselineskip=17pt
\author{Simeon Ball\thanks{Supported by PID2020-113082GB-I00 financed by MCIN / AEI / 10.13039/501100011033, the Spanish Ministry of Science and Innovation.}, Edgar Moreno\thanks{Supported by the Higher Interdisciplinary  Training Center (CFIS) - Universitat Politècnica de Catalunya (UPC) and SANTANDER- UPC INITIAL RESEARCH GRANTS (INIREC).} and Robin Simoens\thanks{Supported by the Research Foundation Flanders through the grant 11PG724N.}
}
\title{Stabiliser codes over fields of even order}
\date{}
\maketitle


\begin{abstract}
    We prove that the natural isomorphism \(\FF_{2^h}\cong\FF_2^h\) induces a bijection between stabiliser codes on \(n\) quqits with local dimension \(q=2^h\) and binary stabiliser codes on \(hn\) qubits.
    This allows us to describe these codes geometrically: a stabiliser code over a field of even order corresponds to a so-called \emph{quantum set of symplectic polar spaces}. 
    Moreover, equivalent stabiliser codes have a similar geometry, which can be used to prove the uniqueness of a \([\![4,0,3]\!]_4\) stabiliser code and the nonexistence of both a \([\![7,1,4]\!]_4\) and an \([\![8,0,5]\!]_4\) stabiliser code.
\end{abstract}

\section{Introduction}



It has been known since the late nineties \cite{CRSS} that qubit stabiliser codes are equivalent to classical additive codes contained in their symplectic dual. It was proved in \cite{ketkar} that stabiliser codes over finite fields are equivalent to additive codes over finite fields which are contained in their trace symplectic dual. In this article, we look at stabiliser codes over finite fields of characteristic two in detail. We prove that there is a bijection between stabiliser codes on $n$ quqits over the field ${\mathbb F}_{2^h}$ and stabiliser codes on $hn$ qubits over the field ${\mathbb F}_2$. We prove that the geometrical object which best describes a stabiliser code over a finite field of even order is a set of $n$ symplectic polar spaces of rank $h$ in a projective space over \(\FF_2\). More precisely, a stabiliser code on $n$ quqits with \(q=2^h\), with a stabiliser group of size $2^r$, is equivalent to a set \(\X\) of $n$ projective $(2h-1)$-spaces in a projective $(r-1)$-space each equipped with a symplectic polarity, with the following property: every codimension two subspace intersects an even number of the elements of \(\X\) in a subspace for which the perpendicular space (with respect to the symplectic polarity) is totally isotropic. This allows us to deduce new results concerning  stabiliser codes over ${\mathbb F}_4$. We prove that the $[\![4, 0, 3]\!]_4$ code is unique, that there is no $[\![7, 1, 4]\!]_4$ code and no $[\![8, 0, 5]\!]_4$ code. This last case is of particular interest since it rules out the existence of a \emph{stabilised} absolutely maximum entangled state on $8$ quqits with local dimension $4$, see \cite{Hubertables}.

This paper is structured as follows. In \rref{Section}{sec:prelim}, we give a background on stabiliser codes, and make the link with classical additive codes. In \rref{Section}{sec:main}, we establish the link between stabiliser codes over \(\FF_{2^h}\) and over \(\FF_2\). In \rref{Section}{sec:geometry}, we provide a geometrical interpretation of stabiliser codes over fields of even order. Finally, in \rref{Section}{sec:applications}, these results are used to prove the uniqueness of a \([\![4,0,3]\!]_4\) code and the nonexistence of a \([\![7,1,4]\!]_4\) code.

\section{Preliminaries}\label{sec:prelim}

In order to define stabiliser codes over finite fields of even order, we first need the definition of the Pauli group.

\subsection{The Pauli group}

Let \(n\in\mathbb{N}\) and let \(q=2^h\) be a power of two. Consider \(\mathcal{H}=\left(\CC^q\right)^{\otimes n}\). This Hilbert space provides a mathematical interpretation for \(n\) physical particles that are each in a superposition of \(q\) basis states (quqits). Its elements are linear combinations of \(n\)-dimensional tensors of \(q\)-dimensional complex vectors. A \emph{quantum error-correcting code} is a subspace of \(\mathcal{H}\). Let \(\{\ket{x}\mid x\in\mathbb{F}_q\}\) be an orthonormal basis of \(\CC^q\), labelled by the elements of the finite field \(\FF_q\).

\begin{definition}
    For any \(a,b\in\FF_q\), let \(X(a)\) and \(Z(b)\) be the linear operators on \(\CC^q\) that are defined on the basis \(\{\ket{x}\mid x\in\FF_q\}\) by
    \begin{align*}
        X(a)\ket{x}=&\ket{x+a}\\
        Z(b)\ket{x}=&(-1)^{\tr(bx)}\ket{x}
    \end{align*}
    where \(\tr\) is the trace map \(\tr:\FF_q\to\FF_2:x\mapsto x+x^2+\cdots+x^{2^{h-1}}\).
    A \emph{Pauli operator} is a linear operator on \(\mathcal{H}\) of the form
    \[E=cX(a_1)Z(b_1)\otimes\cdots\otimes X(a_n)Z(b_n)\]
    for some \(a_1,\dots,a_n,b_1,\dots,b_n\in\FF_q\) and \(c\in\{\pm1,\pm i\}\). Its \emph{weight} is the number of nonidentity factors in its tensor product, i.e.\ the number of \(j\)'s for which \((a_j,b_j)\neq(0,0)\).
    The \emph{Pauli group} \(\P_n\) is the set of Pauli operators. 
\end{definition}

Note that 
\[Z(b)X(a)=(-1)^{\tr(ab)}X(a)Z(b).\]
In particular, \((\P_n,\cdot)\) is indeed a group, and every two elements commute or anti-commute. Pauli operators are unitary and their squares are either \(\id\) or \(-\id\) (where \(\id\) denotes the identity operator). 

\begin{remark}\label{remark:pauli}
    There are other possible definitions of the (generalised) Pauli group, see \cite{nice error bases}. Ours is based upon a labelling with elements of a finite field, as introduced in \cite{ketkar}. Another common one uses a labelling with elements of the modular ring \(\ZZ/q\ZZ\). However, if \(q=2\), they coincide: \(X(1)\) and \(Z(1)\) are the ``ordinary'' Pauli operators, represented by the matrices \[X=\begin{pmatrix}0&1\\1&0\end{pmatrix}\text{\quad and \quad}Z=\begin{pmatrix}1&0\\0&-1\end{pmatrix}.\]
    For general \(q\), our results are specific to the definition of the Pauli operators based on the finite field.
\end{remark}



\subsection{Stabiliser codes}

Stabiliser codes were introduced in \cite{CRSS97} and \cite{gottesman96} independently.

\begin{definition}
    A \emph{stabiliser code} is a quantum error-correcting code of the form
    \[\Q=\left\{\ket{x}\in\mathcal{H}\mid E\ket{x}=\ket{x}\text{ for all }E\in\S\right\}\]
    for some subgroup \(\S\) of the Pauli group \(\P_n\). We assume that this subgroup is maximal with that property, i.e.\
    \[\S=\left\{E\in\P_n\mid E\ket{x}=\ket{x}\text{ for all }\ket{x}\in\Q\right\}.\]
    It is called the \emph{stabiliser} of \(\Q\). We also assume that \(\Q\) is nontrivial, i.e.\ \(\Q\neq\{0\}\), or equivalently, \(\S\neq\P_n\).
\end{definition}


\begin{lemma}[{\cite[Lemma~10]{ketkar}}]\label{lemma:abelian}
    \(\S\) is a stabiliser if and only if \(\S\) is an abelian subgroup of \(\P_n\) and \(-I\notin\S\).
\end{lemma}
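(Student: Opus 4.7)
The plan is to prove both directions, with the harder one (the converse) handled via the standard trace-projection trick: if \(\S\) is an abelian subgroup with \(-I\notin\S\), then \(P:=\frac{1}{|\S|}\sum_{E\in\S}E\) is a projector whose image equals \(\Q\), and \(\dim\Q=\tr(P)\) can be computed explicitly.

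For the forward direction, I would exploit that any two Pauli operators commute or anti-commute. If \(E,F\in\S\) anti-commuted, then for any \(\ket{x}\in\Q\) we would get \(\ket{x}=EF\ket{x}=-FE\ket{x}=-\ket{x}\), forcing \(\ket{x}=0\) and contradicting the nontriviality of \(\Q\); analogously, \(-I\in\S\) would imply \(\ket{x}=-\ket{x}\) for every \(\ket{x}\in\Q\).

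For the converse, assume \(\S\) is abelian with \(-I\notin\S\). I would first record two consequences: \(\pm iI\notin\S\) (their squares give \(-I\)), and every \(E\in\S\) satisfies \(E^2=I\) (else \(E^2=-I\in\S\)), so each such \(E\) has spectrum \(\{\pm1\}\). A routine calculation using that \(\S\) is a group yields \(P^2=P\) and \(EP=P\) for all \(E\in\S\), so the image of \(P\) coincides with \(\Q\). To see that \(\Q\) is nontrivial, I compute the trace of an arbitrary Pauli operator: since \(\tr(X(a)Z(b))\) vanishes unless \(a=b=0\), the tensor-product structure forces \(\tr(E)=0\) for every non-scalar Pauli \(E\); combined with the fact that the only scalar lying in \(\S\) is \(I\) itself, this gives \(\dim\Q=\tr(P)=q^n/|\S|>0\).

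Finally, for the maximality clause in the definition, let \(\S'\) denote the full set of Pauli operators fixing every vector of \(\Q\); then \(\S\subseteq\S'\) trivially. Applying the anti-commutation argument from the forward direction to \(\S'\) shows that \(\S'\) is itself abelian and \(-I\notin\S'\), so the same trace computation produces \(\dim\Q=q^n/|\S'|\). Equating the two expressions for \(\dim\Q\) forces \(|\S|=|\S'|\), hence \(\S=\S'\). The main point of care throughout is bookkeeping with the scalar subgroup \(\{\pm I,\pm iI\}\leq\P_n\); this is exactly where the hypothesis \(-I\notin\S\) does all the work, both to rule out phase obstructions in the trace computation and to keep \(\Q\) from collapsing.
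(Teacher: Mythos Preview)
The paper does not actually prove this lemma; it simply cites \cite[Lemma~10]{ketkar} and moves on. Your argument is correct and is essentially the standard proof found in that reference (and in most treatments of stabiliser codes): the projector \(P=\frac{1}{|\S|}\sum_{E\in\S}E\) together with the trace computation \(\tr(P)=q^n/|\S|\), using that the only scalar in \(\S\) is \(I\).

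One small point you glossed over in the maximality step: to conclude \(\dim\Q=q^n/|\S'|\) from the trace computation applied to \(\S'\), you implicitly need the common fixed space of \(\S'\) to equal \(\Q\) rather than merely be contained in it. This is immediate---every element of \(\S'\) fixes \(\Q\) pointwise by the very definition of \(\S'\), so \(\Q\) sits inside the fixed space of \(\S'\), while the reverse inclusion follows from \(\S\subseteq\S'\)---but it deserves one sentence.
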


\begin{lemma}\label{lemma:involutions}
        Nonidentity elements of \(\S\) have order \(2\).
\end{lemma}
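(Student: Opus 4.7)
The plan is to combine two facts already collected in the excerpt: first, that every Pauli operator $E \in \P_n$ satisfies $E^2 \in \{\id,-\id\}$ (stated immediately after the definition of the Pauli group), and second, that a stabiliser $\S$ is an abelian subgroup of $\P_n$ with $-\id\notin\S$ (Lemma~\ref{lemma:abelian}). From these two facts the result is essentially immediate, so the ``proof'' is really a one-line deduction.

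More precisely, I would argue as follows. Fix an arbitrary $E\in\S\setminus\{\id\}$. Since $\S$ is a subgroup of $\P_n$, the element $E^2$ lies in $\S$. On the other hand $E$ is a Pauli operator, so $E^2\in\{\id,-\id\}$. The case $E^2=-\id$ is excluded because it would put $-\id$ into $\S$, contradicting \rref{Lemma}{lemma:abelian}. Hence $E^2=\id$, and since $E\neq \id$ by assumption, $E$ has order exactly $2$.

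There is no real obstacle here; the only subtle point is to notice that one must invoke the ``$-\id\notin \S$'' clause of \rref{Lemma}{lemma:abelian} in order to rule out order $4$. Everything else is just the group law applied to the square of $E$. Accordingly, in the final write-up I would present this as a single short paragraph rather than a multi-step argument.
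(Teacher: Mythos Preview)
Your argument is correct; the only difference from the paper's proof is the source of the contradiction used to exclude $E^2=-\id$. You invoke \rref{Lemma}{lemma:abelian} to get $-\id\notin\S$, whereas the paper argues directly from the definition of a stabiliser: it picks a nonzero $\ket{x}\in\Q$ (which exists since $\Q\neq\{0\}$) and observes that $E^2=-\id$ would force $\ket{x}=E^2\ket{x}=-\ket{x}$. Your route is marginally cleaner once \rref{Lemma}{lemma:abelian} is in hand; the paper's route is slightly more self-contained since it avoids citing the characterisation result. Either way the proof is a one-liner.
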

\begin{proof}
    Choose any \(E\in\S\) and let \(\ket{x}\) be a nonzero vector of \(\Q\). By definition, \(E\ket{x}=\ket{x}\).
        Since \(E\in\P_n\), either \(E^2=\id\) or \(E^2=-\id\). If \(E^2=-\id\), then \(\ket{x}=E\ket{x}=E^2\ket{x}=-\ket{x}\), a contradiction. So \(E^2=\id\).
\end{proof}

The following property is not as trivial as it looks. For a relatively short proof, see the first part of \cite[Theorem~5.4]{qeccandtheirgeom}.

\begin{lemma}[\cite{CRSS}]\label{lemma:dim}
    The dimension of \(\Q\) is \(q^n/|\S|\).
\end{lemma}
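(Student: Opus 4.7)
The plan is to realise $\Q$ as the image of an explicit projection operator and then read off the dimension by taking traces. Since \rref{Lemma}{lemma:abelian} and \rref{Lemma}{lemma:involutions} tell us that $\S$ is a commuting set of involutions, I would introduce the operator
\[P=\frac{1}{|\S|}\sum_{E\in\S}E.\]
A short computation shows $P^2=P$ and $EP=P$ for every $E\in\S$, so $P$ is a projection whose range lies in $\Q$; conversely any $\ket{y}\in\Q$ satisfies $P\ket{y}=\ket{y}$, so the range of $P$ is exactly $\Q$. This reduces the claim to the identity
\[\dim\Q=\mathrm{Tr}(P)=\frac{1}{|\S|}\sum_{E\in\S}\mathrm{Tr}(E)=\frac{q^n}{|\S|}.\]

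The central step is then to show that $\mathrm{Tr}(E)=0$ for every non-scalar $E\in\P_n$. Because the operator trace is multiplicative across tensor factors, this reduces to the one-particle computation of $\mathrm{Tr}(X(a)Z(b))$ on $\CC^q$. Examining the action $X(a)Z(b)\ket{x}=(-1)^{\tr(bx)}\ket{x+a}$, all diagonal entries vanish when $a\neq 0$, while for $a=0$ the trace becomes $\sum_{x\in\FF_q}(-1)^{\tr(bx)}$, which is zero for $b\neq 0$ since $x\mapsto\tr(bx)$ is then a nonzero $\FF_2$-linear form on $\FF_q$. Hence each tensor factor contributes a nonzero trace only when $(a_j,b_j)=(0,0)$, which forces $E$ to be a scalar multiple of $\id$.

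To finish, I would observe that the only scalar in $\S$ is $\id$ itself: if $c\id\in\S$, then $c^2\id=\id$ by \rref{Lemma}{lemma:involutions}, so $c\in\{\pm 1\}$, and since $-\id\notin\S$ by \rref{Lemma}{lemma:abelian} we conclude $c=1$. Therefore only the $E=\id$ term of the trace sum survives, and $\mathrm{Tr}(\id)=q^n$ yields the desired equality. I do not foresee a serious obstacle; the only substantive ingredient is the vanishing of traces of non-scalar Pauli operators, which is immediate from the explicit action of $X(a)Z(b)$ on the basis $\{\ket{x}\}$.
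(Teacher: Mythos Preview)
Your argument is correct and complete: the projection $P=\frac{1}{|\S|}\sum_{E\in\S}E$ is idempotent with range exactly $\Q$, and the trace computation goes through because every non-scalar Pauli operator has a tensor factor $X(a)Z(b)$ with $(a,b)\neq(0,0)$, whose trace vanishes by the character-sum argument you give. The only point worth making explicit is that $\mathrm{Tr}(P)=\mathrm{rank}(P)$ holds for any idempotent on a finite-dimensional space (or, alternatively, that each $E\in\S$ is Hermitian since $E^2=\id$ forces $E=E^{-1}=E^\dagger$, so $P$ is an orthogonal projection); either way the conclusion $\dim\Q=\mathrm{Tr}(P)$ is justified.

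As for comparison with the paper: the paper does not actually prove this lemma. It remarks that the statement ``is not as trivial as it looks'' and defers to \cite[Theorem~5.4]{qeccandtheirgeom} for a proof. Your projection-and-trace argument is the standard route and is almost certainly what the cited reference does as well, so you have supplied what the paper omits.
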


The \emph{normaliser} of \(\S\) in \(\P_n\) is
\(\N(\S)=\{E\in\P_n\mid E\S E^\dagger=\S\}\).
It can be shown to equal the centraliser, see \cite[Equation~(3.6)]{gottesman}.
Let \([\S]:=\{cE\mid c\in\{\pm1,\pm i\}\text{ and }E\in\S\}\). By \rref{Lemma}{lemma:abelian}, \([\S]\subseteq \N(\S)\). 
\begin{definition}\label{def:parameters}
    If \(|\S|=q^n\), we define the \emph{minimum distance \(d\)} of \(\Q\) as the minimum nonzero weight of the elements of \(\S\). If \(|\S|\neq q^n\), we define \(d\) as the minimum nonzero weight of the elements of 
    \(\N(\S)\setminus[\S]\).
    We define \(k:=\log_q(\dim\Q)\) and say that \(\Q\) is an \([\![n,k,d]\!]_q\) code.
\end{definition}

The motivation for this definition comes from the fact that a stabiliser code with minimum distance \(d\) can detect all errors up to weight \(d-1\). See \cite[Lemma 11]{ketkar}.

Note that \(|\P_n|=4\cdot q^{2n}=2^{2hn+2}\) and that \(\S\) is a subgroup of \(\P_n\), so the size of \(\S\) is a power of two. By \rref{Lemma}{lemma:dim}, \(|\S|=2^{h(n-k)}\), so \(kh\) must be an integer, though \(k\) need not be an integer.

\begin{definition}
    A code is \emph{pure} if \(d\) is the minimum nonzero weight of the elements of 
    \(\N(\S)\), or, equivalently, if the minimum nonzero weight of the elements of \(\S\) is at least \(d\).
\end{definition}

\begin{theorem}[Quantum Singleton bound {\cite[Theorem~2]{mds}}]\label{thm:mds}
    Let \(\Q\) be an \([\![n,k,d]\!]_q\) code. Then \[k\leq n-2d+2.\] If equality holds, the minimum nonzero weight of \(\S\) is \(n-d+2\). In particular, \(\Q\) is pure.
\end{theorem}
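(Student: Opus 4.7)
The plan is to reduce the theorem to a statement about additive codes via the equivalence of \cite{ketkar}: the stabiliser $\S$ corresponds to an additive subgroup $C\subseteq\FF_q^{2n}$ of size $q^{n-k}$ that is self-orthogonal for the trace-symplectic form, so that $C\subseteq C^{\perp_s}$ with $|C^{\perp_s}|=q^{n+k}$. Under the identification $\FF_q^{2n}\cong\FF_{q^2}^n$ the symplectic weight becomes the Hamming weight, and the minimum distance $d$ of $\Q$ is the least Hamming weight of a nonzero element of $C^{\perp_s}\setminus C$ (or of $C^{\perp_s}$ itself when $\Q$ is pure).

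For a pure code the bound is immediate from the classical additive Singleton bound: the projection of $C^{\perp_s}\subseteq\FF_{q^2}^n$ onto any $n-d+1$ coordinates must be injective, so $q^{n+k}=|C^{\perp_s}|\le(q^2)^{n-d+1}$, which rearranges to $k\le n-2d+2$. For a general, possibly impure, code the same projection has kernel contained in $C$, because any element of $C^{\perp_s}$ supported on at most $d-1$ coordinates lies in $C$ by the definition of $d$. By itself this yields only the weak bound $k\le n-d+1$; to sharpen it to $k\le n-2d+2$ one has to combine it with the trace-symplectic self-orthogonality of $C$ inside $C^{\perp_s}$, most cleanly through the symplectic MacWilliams identity linking the Hamming weight distributions $A_i$ of $C$ and $B_i$ of $C^{\perp_s}$. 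The hypothesis that $B_i=A_i$ for $0<i<d$ then yields a Delsarte-style linear-programming inequality that forces the sharp bound. I expect this symplectic bookkeeping to be the main technical obstacle.

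For the equality case, observe first that if $k=n-2d+2$ and $\Q$ is pure, then $C^{\perp_s}$ is an MDS additive code of size $q^{2(n-d+1)}$ in $\FF_{q^2}^n$, projecting surjectively onto every set of $n-d+1$ coordinates. A nonzero $v\in C$ of weight at most $n-d+1$ would then be symplectically orthogonal to all of $\FF_{q^2}^{T'}$ for any coordinate set $T'\supseteq\operatorname{supp}(v)$ of size $n-d+1$, forcing $v|_{T'}=0$ and hence $v=0$ by nondegeneracy of the form; thus the minimum weight of $C$ is at least $n-d+2$, while the classical Singleton bound applied to $C$ itself gives the reverse inequality. The impure case at equality is ruled out by the same MacWilliams analysis used in the general bound: when $k=n-2d+2$, the weight distributions of $C$ and $C^{\perp_s}$ are uniquely determined and match those of an MDS code, so $A_i=B_i=0$ for $0<i<d$ and $\Q$ is automatically pure.
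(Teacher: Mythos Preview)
The paper does not give its own proof of this theorem; it is simply quoted from Rains \cite{mds}. So there is no in-paper argument to compare against, and your proposal should be judged on its own merits and against the cited source.

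Your outline follows the standard route that Rains takes: pass to the additive code $C\subseteq C^{\perp_s}$ via \rref{Theorem}{thm:ketkar}, read symplectic weight as Hamming weight in $\FF_{q^2}^n$, and get the pure case from the classical Singleton bound applied to $C^{\perp_s}$. Your treatment of the equality case \emph{conditional on purity} is also correct and rather clean: the surjective-projection property of the MDS code $C^{\perp_s}$, combined with $C=(C^{\perp_s})^{\perp_s}$ and nondegeneracy of the restricted form, does force every nonzero $v\in C$ to have weight at least $n-d+2$.

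The genuine gap is the impure case, which is precisely the hard content of the theorem. You correctly note that the projection argument degrades to $k\le n-d+1$ and point toward the MacWilliams identity and a Delsarte-type inequality as the remedy, but you do not carry this out; you explicitly flag it as ``the main technical obstacle'' and leave it there. That step is not routine bookkeeping: one has to show that the relations $A_i=B_i$ for $0<i<d$, fed into the symplectic MacWilliams transform, actually force $k\le n-2d+2$, and at equality force $A_i=B_i=0$ for $0<i<d$. This is the Krawtchouk-polynomial computation at the heart of \cite{mds}, and until it is written down, neither the general bound nor the purity statement at equality is proved. As it stands, your proposal is a correct plan with the central lemma missing.
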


\subsection{Equivalent stabiliser codes}

\begin{definition}
    A \emph{Clifford operator} is a unitary operator on \(\mathcal{H}\) that normalises the Pauli group. In other words, it is an operator \(U\) for which
    \[UU^\dagger=\id\qquad\text{and}\qquad U\P_nU^\dagger=\P_n.\]
    The \emph{Clifford group \(\C_n\)} is the set of Clifford operators on \(\P_n\). A Clifford operator is called \emph{local} if it is the \(n\)-fold tensor product of elements of \(\C_1\). The \emph{local Clifford group \(\C_n^l\)} is the set of local Clifford operators, i.e.\ \(\C_n^l=\left(\C_1\right)^{\otimes n}\).
\end{definition}

Note that \(\P_n\subseteq\C_n^l\subseteq\C_n\).

\begin{definition}\label{def:equivalent}
    Two stabiliser codes are \emph{equivalent} if their stabilisers can be obtained from one another by a sequence of the following operations:
    \begin{enumerate}[(i)]
        \item A permutation on the positions of the tensor product.
        \item Multiplying elements by \(\pm1\) while preserving the group structure.
        \item Conjugation with a local Clifford operator, i.e.\ \(\S\mapsto U\S U^\dagger\) for some \(U\in\C_n^l\).
    \end{enumerate}
\end{definition}

We only allow multiplication of an element of \(\S\) by \(\pm1\) and not by \(\pm i\), since the latter would increase its order to \(4\), contradicting \rref{Lemma}{lemma:involutions}.

Note that a more general notion of equivalence exists, where the conjugation is not necessarily with a local Clifford operator, but rather with any unitary operator, see \cite{LU-LC}.


\begin{lemma}\label{lemma:equivalent}
    Equivalent codes have the same parameters.
\end{lemma}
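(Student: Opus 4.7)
The plan is to verify that each of the three operations in \rref{Definition}{def:equivalent} preserves $n$, $k$, and $d$ independently. The invariance of $n$ is immediate since all three operations act on $\mathcal{H}=(\CC^q)^{\otimes n}$. For $k$, by \rref{Lemma}{lemma:dim} it suffices to show that $|\S|$ is unchanged: operation~(i) induces a group automorphism of $\P_n$; operation~(ii) rescales each element of $\S$ by $\pm 1$ and so is a bijection with a new group of the same cardinality; and operation~(iii) is a bijection of $\P_n$ onto itself since $U\in\C_n^l\subseteq\C_n$ normalises $\P_n$. In each case $|\S|$ is preserved, hence so is $k$.

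The substance lies in showing that $d$ is preserved. First I would check that each operation preserves the weight of a Pauli operator. For (i) this is immediate from the definition. For (ii) only the scalar $c$ changes, not the tensor factors $X(a_i)Z(b_i)$. For (iii), writing $U=U_1\otimes\cdots\otimes U_n$ with $U_i\in\C_1$, the conjugate $UEU^\dagger$ has $i$-th tensor factor $U_iE_iU_i^\dagger$, which lies in $\P_1$ since $U_i$ normalises $\P_1$. Clearly $U_iIU_i^\dagger=I$; conversely, if $U_iE_iU_i^\dagger=cI$ then $E_i=cI$. One checks that $X(a)Z(b)$ is a scalar multiple of $I$ if and only if $(a,b)=(0,0)$: for $a\neq 0$ the operator $X(a)$ permutes the basis without fixed vectors, and a diagonal operator with entries $(-1)^{\tr(bx)}$ is scalar only when $\tr(bx)$ is constant in $x$, forcing $b=0$. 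Hence non-identity factors of $E$ correspond to non-identity factors of $UEU^\dagger$, and the weight is preserved.

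Finally I would check that the sets appearing in \rref{Definition}{def:parameters} transform compatibly. Under (iii), conjugation by $U$ sends $\N(\S)$ to $\N(U\S U^\dagger)$ and $[\S]$ to $[U\S U^\dagger]$, because conjugation respects scalar multiplication and commutation. Under (ii), both $[\S]$ and $\N(\S)$ depend only on $\S$ up to scalars (recall that the normaliser equals the centraliser, and commuting with $E$ is the same as commuting with $-E$), which is unchanged by rescaling elements by $\pm 1$. Under (i), the permutation is a group automorphism of $\P_n$ that sends normalisers to normalisers and scalar cosets to scalar cosets. Combined with weight preservation, this yields invariance of the minimum weight used in either clause of \rref{Definition}{def:parameters}, so $d$ is preserved. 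The only real subtlety is the converse direction of weight preservation under local Clifford conjugation — that no non-identity Pauli factor can collapse to a scalar — which is exactly what the calculation with $X(a)Z(b)$ pins down.
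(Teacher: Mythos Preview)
Your proof is correct and follows the same approach as the paper, which in a single sentence asserts that the three operations preserve the size of the stabiliser and the length and weight of Pauli operators. You simply expand on these claims with explicit verifications, including the compatibility of the operations with $\N(\S)$ and $[\S]$ needed for the $k>0$ case.
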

\begin{proof}
    The three operations in \rref{Definition}{def:equivalent} preserve the size of the stabiliser and the length and weight of Pauli operators.
\end{proof}

\begin{example}
    Note that conjugation with a \emph{nonlocal} Clifford operator does not necessarily preserve the weight. For example, if \(n=2\) and \(q=2\), then
    \[U=\frac{1}{2}\left(I\otimes I+X\otimes I+I\otimes Z-X\otimes Z\right)\]
    is a Clifford operator sending \(I\otimes X\) to \(X\otimes X\).
\end{example}

\subsection{Stabiliser codes as additive symplectic self-orthogonal codes}

Since \(\S\) is a subgroup of \(\P_n\) of size \(q^{n-k}\), it generated by a set \(\{E_1,\dots,E_r\}\) of \(r=h(n-k)\) Pauli operators.

\begin{definition}\label{def:tau}
Let \(\tau\) be the map
\begin{align*}
    \tau&:\P_n
    \to\mathbb{F}_{2^h}^{2n}:\\
    &cX(a_1)Z(b_1)\otimes\cdots\otimes X(a_n)Z(b_n)\mapsto(a_1,\dots,a_n,b_1,\dots,b_n).
\end{align*}
\end{definition}

Using this map, \(\S\) can be represented by its \emph{stabiliser matrix}, an \(r\times2n\)-matrix whose rows are the vectors \(\tau(E_j)\) for each \(E_j\) in the generating set:
\[\left(\begin{array}{@{}cccc|cccc@{}}
a_{11}&a_{12}&\cdots&a_{1n}&b_{11}&b_{12}&\cdots&b_{1n}\\
a_{21}&a_{22}&\cdots&a_{2n}&b_{21}&b_{22}&\cdots&b_{2n}\\
\vdots&\vdots&&\vdots&\vdots&\vdots&&\vdots\\
a_{r1}&a_{r2}&\cdots&a_{rn}&b_{r1}&b_{r2}&\cdots&b_{rn}
\end{array}\right)\]
where \(E_j=c_jX(a_{j1})Z(b_{j1})\otimes\cdots\otimes X(a_{jn})Z(b_{jn})\). We disregard the phase factors \(c_j\), since stabilisers that differ in only these constants, are equivalent.

\begin{lemma}\label{lemma:tau}
    \(\tau\) is a group morphism from \((\P_n,\cdot)\) to \((\mathbb{F}_{2^h}^{2n},+)\), i.e.\ \[\tau(E_1E_2)=\tau(E_1)+\tau(E_2).\]
\end{lemma}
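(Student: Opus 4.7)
The plan is to reduce the claim to a one-tensor-factor computation and then use the tensor product structure together with the fact that $\tau$ by definition throws away the scalar phase.

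First I would check the three elementary identities on a single copy of $\CC^q$. Directly from the definition, $X(a)X(a')\ket{x}=\ket{x+a+a'}=X(a+a')\ket{x}$, so $X(a)X(a')=X(a+a')$. Similarly, using $(-1)^{\tr(bx)}(-1)^{\tr(b'x)}=(-1)^{\tr((b+b')x)}$ (which in turn uses $\FF_2$-linearity of $\tr$), I get $Z(b)Z(b')=Z(b+b')$. Combined with the commutation relation $Z(b)X(a')=(-1)^{\tr(a'b)}X(a')Z(b)$ recorded in the preliminaries, this gives
\[
X(a)Z(b)\,X(a')Z(b')=(-1)^{\tr(a'b)}\,X(a+a')\,Z(b+b').
\]

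Now take $E_1=c_1\bigotimes_{j=1}^n X(a_j)Z(b_j)$ and $E_2=c_2\bigotimes_{j=1}^n X(a'_j)Z(b'_j)$. Because the tensor product multiplies factorwise, applying the single-factor identity on each slot yields
\[
E_1E_2=c_1c_2\Bigl(\prod_{j=1}^n(-1)^{\tr(a'_jb_j)}\Bigr)\bigotimes_{j=1}^n X(a_j+a'_j)\,Z(b_j+b'_j).
\]
This is again a Pauli operator in the normal form of the definition, with data $(a_j+a'_j)_j$ and $(b_j+b'_j)_j$ and an overall scalar in $\{\pm 1,\pm i\}$. Since $\tau$ forgets that scalar, we conclude
\[
\tau(E_1E_2)=(a_1+a'_1,\dots,a_n+a'_n,b_1+b'_1,\dots,b_n+b'_n)=\tau(E_1)+\tau(E_2),
\]
which is exactly the group morphism property.

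There is no real obstacle; the only point to be careful about is that the sign $(-1)^{\tr(a'_jb_j)}$ produced by moving each $Z(b_j)$ past $X(a'_j)$ is harmless because it is absorbed into the phase factor that $\tau$ ignores, and the additive structure on the $\FF_{2^h}$-data is inherited from the obvious facts $X(a)X(a')=X(a+a')$ and $Z(b)Z(b')=Z(b+b')$.
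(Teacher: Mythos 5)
Your proof is correct and follows essentially the same route as the paper's (which simply cites $X(a)X(b)=X(a+b)$, $Z(a)Z(b)=Z(a+b)$, and the fact that $\tau$ ignores the sign produced by commuting $Z$ past $X$); you have merely written out the factorwise computation in full detail. No gaps.
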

\begin{proof}
    Since \(X(a)X(b)=X(a+b)\) and \(Z(a)Z(b)=Z(a+b)\), together with the fact that Pauli operators either commute or anti-commute (and \(\tau\) ignores the factor \(\pm1\)).
\end{proof}

The fact that the elements of \(\S\) commute, translates into the property that the rows of this matrix are orthogonal with respect to the \emph{trace-symplectic inner product}
\[\langle(a|b),(a'|b')\rangle_s=\tr(a\cdot b'+a'\cdot b)=\sum_{i=1}^n\tr(a_ib'_i+a'_ib_i).\]

\begin{lemma}[\cite{CRSS}]\label{lemma:commute}
    Let \(E_1,E_2\in\P_n\). Then \(E_1E_2=E_2E_1\Longleftrightarrow\langle\tau(E_1),\tau(E_2)\rangle_s=0\).
\end{lemma}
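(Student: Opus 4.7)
The plan is to reduce the claim to a single-tensor-factor computation using the commutation relation $Z(b)X(a)=(-1)^{\tr(ab)}X(a)Z(b)$ stated just after the definition of the Pauli group. Since the tensor product multiplies componentwise, and the scalar phase $c\in\{\pm 1,\pm i\}$ is central in $\P_n$ and ignored by $\tau$, the global commutator of two Pauli operators splits as a product of the local commutators on each of the $n$ tensor positions.

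Concretely, I would write $E_1 = c\bigotimes_{i=1}^n X(a_i)Z(b_i)$ and $E_2 = c'\bigotimes_{i=1}^n X(a'_i)Z(b'_i)$, and then on the $i$-th slot compute the product $X(a_i)Z(b_i)X(a'_i)Z(b'_i)$ by sliding $Z(b_i)$ past $X(a'_i)$, which contributes a factor $(-1)^{\tr(a'_i b_i)}$, and then collapsing with $X(a)X(a')=X(a+a')$ and $Z(b)Z(b')=Z(b+b')$. The analogous computation for $E_2 E_1$ contributes $(-1)^{\tr(a_i b'_i)}$ on each slot. Taking the ratio and using additivity of $\tr$ yields
\[E_1 E_2 = (-1)^{\sum_{i=1}^n\bigl(\tr(a_i b'_i)+\tr(a'_i b_i)\bigr)}\, E_2 E_1 = (-1)^{\langle\tau(E_1),\tau(E_2)\rangle_s}\, E_2 E_1.\]

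The equivalence then follows: $E_1 E_2 = E_2 E_1$ iff the exponent vanishes, which is exactly the condition $\langle\tau(E_1),\tau(E_2)\rangle_s=0$ in $\FF_2$. There is no substantive obstacle; the one piece of care is to check that the $\FF_2$-valued exponent correctly controls the sign $\pm 1$ in $\CC$, which is automatic because $(-1)^x$ depends only on $x\bmod 2$. This approach mirrors the proof of \rref{Lemma}{lemma:tau}, which already handled the ``up to sign'' part of multiplying Pauli operators; here we simply track the sign itself.
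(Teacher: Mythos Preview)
Your proposal is correct and follows essentially the same approach as the paper: both reduce to the single-factor commutation rule \(Z(b)X(a)=(-1)^{\tr(ab)}X(a)Z(b)\), expand \(E_1E_2\) and \(E_2E_1\) tensor-wise, and read off the relation \(E_1E_2=(-1)^{\langle\tau(E_1),\tau(E_2)\rangle_s}E_2E_1\).
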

\begin{proof}
    Denote \(E_1=X(a_1)Z(b_2)\otimes\cdots\otimes X(a_n)Z(b_n)\) and \(E_2=X(a'_1)Z(b'_2)\otimes\cdots\otimes X(a'_n)Z(b'_n)\). Since \[Z(b)X(a)=(-1)^{\tr(ab)}X(a)Z(b),\] we have
    \[E_1E_2=(-1)^{\sum_{i=1}^n\tr(a'_ib_i)}X(a_1+a'_1)Z(b_1+b'_1)\otimes\cdots\otimes X(a_n+a'_n)Z(b_n+b'_n)\]
    and
    \[E_2E_1=(-1)^{\sum_{i=1}^n\tr(a_ib'_i)}X(a_1+a'_1)Z(b_1+b'_1)\otimes\cdots\otimes X(a_n+a'_n)Z(b_n+b'_n)\]
    so
    \[E_1E_2=(-1)^{\langle(a|b),(a'|b')\rangle_s}E_2E_1.\]
\end{proof}

In other words, the stabiliser matrix is a generator matrix of a code \(C\) that is contained in its \emph{symplectic dual}
\[C^{\perp_s}=\{(a|b)\in\mathbb{F}_q^{2n}\mid\langle(a|b),(a'|b')\rangle_s=0\text{ for all }(a'|b')\in C\}.\]

The \emph{symplectic weight} of a vector \((a|b)\in\mathbb{F}_q^{2n}\) is defined as
\[\mathrm{swt}(a|b)=\left|\left\{j\in\{1,\dots,n\}\mid(a_j,b_j)\neq(0,0)\right\}\right|.\]
So the weight of a Pauli operator \(E\in\P_n\) is equal to \(\mathrm{swt}(\tau(E))\).
The symplectic weight of a set of vectors is defined as the minimum among those weights, not including the zero weight.

\begin{theorem}[{\cite[Theorem~13]{ketkar}}]\label{thm:ketkar}
    Let \(\Q\) be an \([\![n,k,d]\!]_q\) code with stabiliser \(\S\). Then \(C=\tau(\S)\) is an additive code \(C\subseteq\FF_q^{2n}\) of size \(|C|=q^{n-k}\) such that \(C\subseteq C^{\perp_s}\) for which \(\mathrm{swt}(C^{\perp_s})=d\) if \(k=0\) and \(\mathrm{swt}(C^{\perp_s}\setminus C)=d\) if \(k>0\). Conversely, given such a code \(C\), there is an \([\![n,k,d]\!]_q\) code with stabiliser \(\S\) such that \(C=\tau(\S)\).
\end{theorem}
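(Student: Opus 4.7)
The plan is to prove the two implications separately, using $\tau$ as the dictionary and \rref{Lemma}{lemma:commute} (commutation $\leftrightarrow$ symplectic orthogonality) as the main technical input.

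For the forward direction, start from a stabiliser $\S$ of an $[\![n,k,d]\!]_q$ code $\Q$. Additivity of $C=\tau(\S)$ follows from \rref{Lemma}{lemma:tau}. The kernel of $\tau$ consists of the scalars $\{\pm I,\pm iI\}$, and $\S$ contains none of $-I$, $iI$, $-iI$: the first by \rref{Lemma}{lemma:abelian}, the others because they have order four whereas \rref{Lemma}{lemma:involutions} forces nonidentity elements of $\S$ to be involutions. Hence $\tau|_\S$ is injective, so $|C|=|\S|=q^{n-k}$ by \rref{Lemma}{lemma:dim}. Abelianness of $\S$ combined with \rref{Lemma}{lemma:commute} gives $C\subseteq C^{\perp_s}$.

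The weight condition will come from translating \rref{Definition}{def:parameters} under $\tau$. Since every element of $\FF_q^{2n}$ admits a canonical Pauli lift, \rref{Lemma}{lemma:commute} yields $\tau(\N(\S))=C^{\perp_s}$, and clearly $\tau([\S])=\tau(\S)=C$; a short check (any $E\in\N(\S)$ with $\tau(E)\in C$ differs from an element of $\S$ by a scalar, hence lies in $[\S]$) upgrades this to $\tau(\N(\S)\setminus[\S])=C^{\perp_s}\setminus C$. Since the weight of a Pauli operator equals the symplectic weight of its $\tau$-image, \rref{Definition}{def:parameters} translates verbatim into $\mathrm{swt}(C^{\perp_s}\setminus C)=d$ when $k>0$. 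When $k=0$, an $\FF_2$-dimension count on the trace-symplectic form gives $|C||C^{\perp_s}|=q^{2n}$, so $C=C^{\perp_s}$, and the definition of $d$ in this case matches $\mathrm{swt}(C^{\perp_s})$.

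The converse direction is the main obstacle, as one must lift the additive code $C$ to a concrete abelian subgroup of $\P_n$ satisfying the order-two and $-I$-avoidance constraints required by \rref{Lemma}{lemma:abelian}. The plan is to fix an $\FF_2$-basis $c_1,\dots,c_r$ of $C$ with $r=h(n-k)$, write $c_j=(a_j|b_j)$, and consider the canonical lift $\tilde E_j=X(a_{j1})Z(b_{j1})\otimes\cdots\otimes X(a_{jn})Z(b_{jn})$; a direct computation with the anti-commutation relation gives $\tilde E_j^2\in\{\pm I\}$, and multiplying by $i$ in the latter case produces a lift $E_j$ with $E_j^2=I$. The $E_j$ commute pairwise by \rref{Lemma}{lemma:commute} since $c_j,c_{j'}\in C\subseteq C^{\perp_s}$, so $\S:=\langle E_1,\dots,E_r\rangle=\{\prod_jE_j^{\epsilon_j}:\epsilon\in\FF_2^r\}$. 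Injectivity of $\epsilon\mapsto\prod_jE_j^{\epsilon_j}$ follows from $\tau(\prod_jE_j^{\epsilon_j})=\sum_j\epsilon_jc_j$ together with the linear independence of the $c_j$; in particular $-I\in\S$ would force a nontrivial null combination, which is impossible. \rref{Lemma}{lemma:abelian} then certifies $\S$ as a stabiliser with $\tau(\S)=C$, and the forward direction supplies the required parameters of the associated code.
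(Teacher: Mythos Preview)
The paper does not supply its own proof of this theorem; it is quoted from \cite{ketkar} without argument. Your proof is correct and is essentially the standard one, assembled cleanly from the lemmas the paper does prove (\rref{Lemma}{lemma:tau}, \rref{Lemma}{lemma:abelian}, \rref{Lemma}{lemma:involutions}, \rref{Lemma}{lemma:dim}, \rref{Lemma}{lemma:commute}); in particular the lift in the converse direction---adjusting phases on a basis so that each generator squares to $I$, then invoking pairwise commutation from $C\subseteq C^{\perp_s}$ and linear independence of the $c_j$ to exclude $-I$---is exactly the expected construction.
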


We should mention that the inverse operation is a bit trickier. Given a vector \((a|b)\in\FF_q^{2n}\), there are four Pauli operators \(E\) such that \(\tau(E)=(a|b)\). They differ in a phase factor. However, this phase factor cannot be chosen arbitrarily. For example, if \(n=1\) and \(q=2\), the vector \((1,1)\) should not be mapped to \(XZ\) but rather to \(iXZ\) or \(-iXZ\) since \(XZ\) has order \(4\) instead of the required order \(2\) (see \rref{Lemma}{lemma:involutions}). For each vector of the code, we have two options for the phase factor. However, up to equivalence, there is only one pre-image of \(\tau\). We will therefore use the notation \(\tau^{-1}(C)\) for the \emph{class} of all stabilisers whose image is \(C\). In other words,
\[\tau^{-1}(C)=[\S]\qquad\text{where }\tau(\S)=C.\]



\section{Stabiliser codes over fields of even order as binary stabiliser codes}\label{sec:main}

In this section, we work with multiple values of \(n\) and \(h\). We will therefore denote the Hilbert space \((\CC^{2^h})^{\otimes n}\) by \(\mathcal{H}_{n,h}\), the corresponding Pauli group by \(\P_{n,h}\), its Clifford group by \(\C_{n,h}\) and the local Clifford group by \(\C_{n,h}^l\). The map \(\tau\) becomes \(\tau_{n,h}\).

From now on, we also use \(i\) as an index, and not necessarily the complex unit.

\subsection{The trace-orthogonal trick}

Stabiliser codes over fields of even order are connected to binary stabiliser codes by Theorem~\ref{thm:main} below. It is a reformulation of \cite[Lemma~76]{ketkar}, a result that was already implicitly contained in \cite{ashikminknill}. For the sake of completeness, we include proofs of this theorem and its preparatory lemmas. To this end, we use a so-called trace-orthogonal basis, which was introduced in \cite{ashikminknill}. See for example also \cite{traceortho}.

A basis \(\{e_1,\dots,e_h\}\) of \(\mathbb{F}_{2^h}\cong\mathbb{F}_2^h\) over \(\mathbb{F}_2\) is called \emph{trace-orthogonal} if it has the property that \(\mathrm{tr}(e_ie_j)=\delta_{ij}\) for all \(i,j\in\{1,\dots,h\}\). Note that such a basis always exists \cite[Theorem~4]{selfdual}.

Recall that \(\{\ket{x}\mid x\in\FF_{2^h}\}\) is an orthonormal basis of \(\CC^{2^h}\), labelled by the elements of the finite field \(\FF_{2^h}\). This allows us to translate the isomorphism \(\FF_{2^h}\cong\FF_2^h\) into a bijective linear map between \(\mathcal{H}_{n,h}\) and \(\mathcal{H}_{hn,1}\).


\begin{definition}
    Let \(\{e_1,\dots,e_h\}\) be a trace-orthogonal basis of \(\mathbb{F}_{2^h}\) over \(\mathbb{F}_2\). Let \(\Psi_{n,h}\) be the linear transformation \(\mathcal{H}_{n,h}\to\mathcal{H}_{hn,1}\) that is defined on the basis \(\{\ket{x}\mid x\in\FF_{2^h}\}^{\otimes n}\) by
    \begin{multline*}\Psi_{n,h}\ket{x_{11}e_1+\cdots+x_{1h}e_h}\otimes\cdots\otimes \ket{x_{n1}e_1+\cdots+x_{nh}e_h}\\= \ket{x_{11}}\otimes\cdots\otimes \ket{x_{1h}}\otimes\cdots\otimes \ket{x_{n1}}\otimes\cdots\otimes \ket{x_{nh}}.\end{multline*}

    For an operator \(A\) on \(\mathcal{H}_{n,h}\), define \[\psi_{n,h}(A):=\Psi_{n,h}A\Psi_{n,h}^{-1}\] to be the corresponding operator on \(\mathcal{H}_{hn,1}\).
\end{definition}




\begin{lemma}\label{lemma:rhomultiplicative}
    \(\psi_{n,h}(cAB)=c\psi_{n,h}(A)\psi_{n,h}(B)\) and \(\psi_{n,h}(A^\dagger)=\psi_{n,h}(A)^\dagger\).
\end{lemma}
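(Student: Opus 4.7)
The plan is to unpack the definition $\psi_{n,h}(A)=\Psi_{n,h}A\Psi_{n,h}^{-1}$ and observe that both claims reduce to elementary facts about conjugation by an invertible (and in fact unitary) linear map.

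First I would handle multiplicativity by direct computation: by definition,
\[\psi_{n,h}(cAB)=\Psi_{n,h}(cAB)\Psi_{n,h}^{-1}=c\,\Psi_{n,h}A\Psi_{n,h}^{-1}\Psi_{n,h}B\Psi_{n,h}^{-1}=c\,\psi_{n,h}(A)\psi_{n,h}(B),\]
inserting $\Psi_{n,h}^{-1}\Psi_{n,h}=\id$ in the middle. This is routine and requires nothing beyond the definition.

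The dagger claim is where the trace-orthogonal basis has to do real work. The identity $\psi_{n,h}(A^\dagger)=\psi_{n,h}(A)^\dagger$ is equivalent to $\Psi_{n,h}A^\dagger\Psi_{n,h}^{-1}=(\Psi_{n,h}^{-1})^\dagger A^\dagger \Psi_{n,h}^\dagger$, i.e.\ to the unitarity condition $\Psi_{n,h}^{-1}=\Psi_{n,h}^\dagger$. So the key step is to show that $\Psi_{n,h}$ is unitary. For this, I would argue that $\Psi_{n,h}$ maps one orthonormal basis to another. Indeed, $\{\ket{x}:x\in\FF_{2^h}\}$ is orthonormal by hypothesis, so $\{\ket{y_1}\otimes\cdots\otimes\ket{y_n}:y_j\in\FF_{2^h}\}$ is an orthonormal basis of $\mathcal{H}_{n,h}$; similarly $\{\ket{z_{11}}\otimes\cdots\otimes\ket{z_{nh}}:z_{ji}\in\FF_2\}$ is an orthonormal basis of $\mathcal{H}_{hn,1}$. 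The definition of $\Psi_{n,h}$ is precisely the bijection between these two bases induced by the $\FF_2$-linear isomorphism $\FF_{2^h}\to\FF_2^h$, $x_{j1}e_1+\cdots+x_{jh}e_h\mapsto(x_{j1},\dots,x_{jh})$, applied coordinatewise. A linear map sending an orthonormal basis bijectively to an orthonormal basis is unitary, so $\Psi_{n,h}^{-1}=\Psi_{n,h}^\dagger$.

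With unitarity in hand, the dagger identity is immediate:
\[\psi_{n,h}(A)^\dagger=(\Psi_{n,h}A\Psi_{n,h}^{-1})^\dagger=(\Psi_{n,h}^{-1})^\dagger A^\dagger\Psi_{n,h}^\dagger=\Psi_{n,h}A^\dagger\Psi_{n,h}^{-1}=\psi_{n,h}(A^\dagger).\]
The only potential subtlety is conceptual rather than technical: one must recognise that the trace-orthogonality of $\{e_1,\dots,e_h\}$ is \emph{not} used at this stage (it plays its role only later, when one wants $\psi_{n,h}$ to send Pauli operators to Pauli operators). Here, only the $\FF_2$-linear independence of $\{e_1,\dots,e_h\}$ is needed, to ensure that $\Psi_{n,h}$ is well defined as a bijection on basis vectors. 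That is the whole argument.
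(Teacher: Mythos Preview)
Your proof is correct and follows exactly the paper's approach: the first identity is immediate from the definition of conjugation, and the second follows from the unitarity of $\Psi_{n,h}$, which holds because it sends an orthonormal basis to an orthonormal basis. Your write-up simply spells out these steps in more detail (and correctly notes that trace-orthogonality is not yet needed here).
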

\begin{proof}
    The first equality is true by definition. The second one follows from the fact that \(\Psi_{n,h}\) is a unitary transformation (it maps an orthonormal basis to an orthonormal basis). 
\end{proof}

\begin{lemma}\label{lemma:rhopauli}
    \(\psi_{n,h}(\P_{n,h})=\P_{hn,1}\). More precisely, \(\psi_{n,h}\) maps \begin{multline*}cX(a_{11}e_1+\cdots+a_{1h}e_h)Z(b_{11}e_1+\cdots+b_{1h}e_h)\otimes\cdots\\\cdots\otimes X(a_{n1}e_1+\cdots+a_{nh}e_h)Z(b_{n1}e_1+\cdots+b_{nh}e_h)\end{multline*} to \[cX(a_{11})Z(b_{11})\otimes\cdots\otimes X(a_{1h})Z(b_{1h})\otimes\cdots\otimes X(a_{n1})Z(b_{n1})\otimes\cdots\otimes X(a_{nh})Z(b_{nh}).\]
\end{lemma}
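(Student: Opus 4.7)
The plan is to reduce to the single-site case $n=1$, verify the claim there by a direct computation that crucially uses the trace-orthogonal property of $\{e_1,\dots,e_h\}$, and then bootstrap to general $n$ using the tensor structure built into $\Psi_{n,h}$.

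First I would treat $n=1$. Write $x=x_1e_1+\cdots+x_he_h$, $a=a_1e_1+\cdots+a_he_h$, $b=b_1e_1+\cdots+b_he_h$ with $x_i,a_i,b_i\in\FF_2$. For the $X$-part, $X(a)\ket{x}=\ket{x+a}$ and $x+a=(x_1+a_1)e_1+\cdots+(x_h+a_h)e_h$, so applying $\Psi_{1,h}$ to both sides shows $\Psi_{1,h}X(a)\Psi_{1,h}^{-1}$ agrees with $X(a_1)\otimes\cdots\otimes X(a_h)$ on every basis vector. For the $Z$-part, $Z(b)\ket{x}=(-1)^{\tr(bx)}\ket{x}$, and expanding the product gives
\[
\tr(bx)=\sum_{i,j=1}^h b_i x_j\tr(e_ie_j)=\sum_{i=1}^h b_ix_i,
\]
where the second equality is exactly the trace-orthogonality $\tr(e_ie_j)=\delta_{ij}$. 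Hence the scalar $(-1)^{\tr(bx)}$ coincides with $\prod_i(-1)^{b_ix_i}$, which is the phase produced by $Z(b_1)\otimes\cdots\otimes Z(b_h)$ on $\ket{x_1}\otimes\cdots\otimes\ket{x_h}$. Combining both computations (and using that $\Psi_{1,h}$ is linear, so multiplying operators just multiplies their conjugates, as noted in Lemma~\ref{lemma:rhomultiplicative}), the operator $X(a)Z(b)$ is sent to $X(a_1)Z(b_1)\otimes\cdots\otimes X(a_h)Z(b_h)$, and a scalar $c\in\{\pm1,\pm i\}$ is preserved.

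For general $n$, the definition of $\Psi_{n,h}$ on the product basis shows immediately that it factorises as $\Psi_{1,h}^{\otimes n}$ (modulo the ordering of the $hn$ qubits which is the one chosen in the definition). Therefore conjugation by $\Psi_{n,h}$ respects tensor products: if $E=E^{(1)}\otimes\cdots\otimes E^{(n)}$ with $E^{(j)}\in\P_{1,h}$, then $\psi_{n,h}(E)=\psi_{1,h}(E^{(1)})\otimes\cdots\otimes\psi_{1,h}(E^{(n)})$. Applying the $n=1$ case to each tensor factor yields the explicit formula claimed in the statement. Finally, since every element of $\P_{hn,1}$ is of the form on the right-hand side for some choice of $a_{ji},b_{ji}\in\FF_2$ and $c\in\{\pm1,\pm i\}$, the map $\psi_{n,h}$ is surjective onto $\P_{hn,1}$, giving $\psi_{n,h}(\P_{n,h})=\P_{hn,1}$.

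The only step requiring real content is the $Z$-computation at a single site, because it is there that the trace-orthogonality of the basis is used in an essential way; without it the phase $(-1)^{\tr(bx)}$ would not decouple as $(-1)^{\sum_i b_ix_i}$ and the image would fail to be a simple tensor product of binary Pauli operators. Everything else is either linearity of $\Psi_{n,h}$, unravelling the tensor structure, or reading off the formula.
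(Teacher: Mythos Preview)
Your proof is correct and follows essentially the same approach as the paper: reduce to $n=1$ via the tensor factorisation $\psi_{n,h}=\psi_{1,h}^{\otimes n}$, and then verify the single-site case by a direct computation in which the trace-orthogonality of $\{e_1,\dots,e_h\}$ is exactly what makes the $Z$-phase decouple. The only cosmetic difference is that the paper reduces one step further to the generators $X(e_i)$ and $Z(e_i)$ before computing, whereas you handle a general $X(a)$ and $Z(b)$ directly.
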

\begin{proof}
    It suffices to prove this for \(n=1\) because \(\psi_{n,h}=\psi_{1,h}^{\otimes n}\) and \(\P_{n,h}=\P_{1,h}^{\otimes n}\). 
    Moreover, since \(X(a+b)=X(a)X(b)\) and \(Z(a+b)=Z(a)Z(b)\), and since \(\psi_{n,h}\) is product-preserving (\rref{Lemma}{lemma:rhomultiplicative}), it is enough to prove that
    \(\psi_{1,h}(X(e_i))=X_i\) and \(\psi_{1,h}(Z(e_i))=Z_i\), where \(X_i\) resp.\ \(Z_i\) denotes the weight one Pauli operator with an \(X\) resp.\ \(Z\) in the \(i\)-th position. 
    Let \(x_1,\dots,x_h\in\FF_2\). Then \begin{align*}
        \psi_{1,h}(X(e_i))\ket{x_1}\otimes\cdots\otimes\ket{x_h}&=\Psi_{1,h} X(e_i)\ket{x_1e_1+\cdots+x_he_h}\\
        &=\Psi_{1,h}\ket{x_1e_1+\cdots+(x_i+1)e_i+\cdots+x_he_h}\\
        &=\ket{x_1}\otimes\cdots\otimes\ket{x_i+1}\otimes\cdots\otimes\ket{x_h}\\
        &=X_i\ket{x_1}\otimes\cdots\otimes\ket{x_h}
    \end{align*}
    and
    \begin{align*}
        \psi_{1,h}(Z(e_i))\ket{x_1}\otimes\cdots\otimes\ket{x_h}&=\Psi_{1,h}Z(e_i)\ket{x_1e_1+\cdots+x_he_h}\\
        &=\Psi_{1,h}(-1)^{\tr(e_i\cdot(x_1e_1+\cdots+x_he_h))}\ket{x_1e_1+\cdots+x_he_h}\\
        &=(-1)^{x_i}\ket{x_1}\otimes\cdots\otimes\ket{x_h}\\
        &=Z_i\ket{x_1}\otimes\cdots\otimes\ket{x_h}
    \end{align*}
    where we used the fact that \(\{e_1,\dots,e_h\}\) is a trace-orthogonal basis.
\end{proof}



The following theorem is a restatement of \cite[Lemma~76]{ketkar}, but was already implicitly contained in \cite{ashikminknill} and was extended to arbitrary characteristic in \cite{gross}, see also \cite{heinrich}. We include a short proof for the sake of completeness.

\begin{theorem}[{\cite[Lemma~76]{ketkar}}]\label{thm:main}
    \(\Psi_{n,h}\) induces a bijection between \([\![n,k,d]\!]_{2^h}\) codes and \([\![hn,hk,d']\!]_2\) codes and \(\psi_{n,h}\) induces a bijection between their respective stabilisers. Moreover, \(d\leq d'\leq hd\).
\end{theorem}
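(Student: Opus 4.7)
The plan is to assemble \rref{Lemma}{lemma:rhomultiplicative} and \rref{Lemma}{lemma:rhopauli} into three complementary observations: a bijection on stabilisers, an induced bijection on the corresponding quantum codes, and a two-sided weight bound.

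First, I would check that \(\psi_{n,h}\) maps stabilisers bijectively to stabilisers. By \rref{Lemma}{lemma:rhopauli}, \(\psi_{n,h}\) is a bijection \(\P_{n,h}\to\P_{hn,1}\), and by \rref{Lemma}{lemma:rhomultiplicative} it is a group isomorphism that fixes scalar factors; in particular \(\psi_{n,h}(\id)=\id\) and \(\psi_{n,h}(-\id)=-\id\). Hence \rref{Lemma}{lemma:abelian} gives that \(\S\subseteq\P_{n,h}\) is a stabiliser if and only if \(\psi_{n,h}(\S)\subseteq\P_{hn,1}\) is, and clearly \(|\S|=|\psi_{n,h}(\S)|\).

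Second, I would show that \(\Psi_{n,h}\) sends the code \(\Q\) stabilised by \(\S\) to the code stabilised by \(\psi_{n,h}(\S)\). For any \(\ket{x}\in\Q\) and any \(E\in\S\),
\[\psi_{n,h}(E)\,\Psi_{n,h}\ket{x}=\Psi_{n,h}E\Psi_{n,h}^{-1}\Psi_{n,h}\ket{x}=\Psi_{n,h}\ket{x},\]
and unitarity of \(\Psi_{n,h}\) gives the reverse inclusion. Comparing \(\dim\Psi_{n,h}(\Q)=\dim\Q=2^{hn}/|\S|\) with the formula from \rref{Lemma}{lemma:dim} applied to the image code then forces \(k'=hk\), so the image is an \([\![hn,hk,d']\!]_2\) code.

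Third, for the weight bound, the explicit expression in \rref{Lemma}{lemma:rhopauli} shows that a trivial tensor factor \(X(a_j)Z(b_j)=\id\) (i.e.\ \((a_j,b_j)=(0,0)\)) is expanded into \(h\) identity binary factors, whereas a nontrivial one is expanded into \(h\) binary Pauli factors, of which at least one (because some coordinate \(a_{ji}\) or \(b_{ji}\) in the trace-orthogonal basis is nonzero) and at most all \(h\) are nontrivial. Summing over positions,
\[\mathrm{wt}(E)\leq\mathrm{wt}(\psi_{n,h}(E))\leq h\cdot\mathrm{wt}(E).\]
Since \(\psi_{n,h}\) preserves the normaliser (conjugation in \(\P_{n,h}\) corresponds under \(\psi_{n,h}\) to conjugation in \(\P_{hn,1}\)) and the scalar-closure \([\S]\) (scalar factors are fixed), applying this inequality to a minimum-weight witness in \(\N(\S)\setminus[\S]\) (or in \(\S\) when \(k=0\)) gives \(d'\leq hd\), and applying it to the \(\psi_{n,h}\)-preimage of a minimum-weight witness on the binary side gives \(d\leq d'\).

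The main obstacle is bookkeeping rather than calculation: one must verify that the case distinction in \rref{Definition}{def:parameters} between \(k=0\) and \(k>0\) is preserved (it is, since \(hk=0\) iff \(k=0\)), and that a minimum-weight witness for \(d'\) in \(\P_{hn,1}\) is genuinely the image of some element on the \(\FF_{2^h}\) side, which is automatic because \(\psi_{n,h}\) is a bijection sending \(\N(\S)\setminus[\S]\) onto \(\N(\psi_{n,h}(\S))\setminus[\psi_{n,h}(\S)]\). The weight bound itself is then a one-line consequence of \rref{Lemma}{lemma:rhopauli}.
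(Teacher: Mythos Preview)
Your proposal is correct and follows essentially the same route as the paper: both arguments reduce the bijection of codes to the bijection of stabilisers via \rref{Lemma}{lemma:rhomultiplicative} and \rref{Lemma}{lemma:rhopauli}, and both extract the two-sided bound \(d\leq d'\leq hd\) by applying the coordinate expansion in \rref{Lemma}{lemma:rhopauli} to a minimum-weight witness in \(\N(\S)\setminus[\S]\) (resp.\ \(\S\)) and to its \(\psi_{n,h}\)-preimage. The paper is slightly more compact in that it verifies the two defining equalities for \(\Q\) and \(\S\) simultaneously under \(\Psi_{n,h}\), whereas you separate the stabiliser bijection (via \rref{Lemma}{lemma:abelian}) from the code bijection; you are also a bit more careful about the \([\S]\) versus \(\S\) distinction, but these are stylistic differences rather than a different strategy.
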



\begin{proof}
    \(\Q\) is an \([\![n,k,d]\!]_{2^h}\) code with stabiliser \(\S\) for some \(d\) if and only if
    \begin{align*}
        \Q&=\left\{\ket{x}\in\mathcal{H}_{n,h}\mid E\ket{x}=\ket{x}\text{ for all }E\in\S\right\}\\
        \S&=\left\{E\in\P_{n,h}\mid E\ket{x}=\ket{x}\text{ for all }\ket{x}\in\Q\right\}
    \end{align*}
    and \(\dim\Q=(2^h)^k\). Since \(\Psi_{n,h}\) is a bijection, and by \rref{Lemma}{lemma:rhomultiplicative} and \rref{Lemma}{lemma:rhopauli}, this is equivalent to saying that
    \begin{align*}
        \Psi_{n,h}(\Q)
        &=\left\{\ket{x}\in\mathcal{H}_{hn,1}\mid E\ket{x}=\ket{x}\text{ for all }E\in\psi_{n,h}(\S)\right\}\\
        \psi_{n,h}(\S)
        &=\left\{E\in\P_{hn,1}\mid E\ket{x}=\ket{x}\text{ for all }\ket{x}\in\Psi_{n,h}(\Q)\right\}
    \end{align*}
    where \(\dim\Psi_{n,h}(\Q)=\dim\Q=2^{hk}\). In other words, \(\Q':=\Psi_{n,h}(\Q)\) is an \([\![hn,hk,d']\!]_2\) code with stabiliser \(\S':=\psi_{n,h}(\S)\).

    \(d\leq d'\). There is an element of weight \(d'\) in \(\N(\S')\setminus\S'\) (or \(\S'\) if \(k=0\)). From \rref{Lemma}{lemma:rhopauli}, it follows that applying \(\Psi_{n,h}^{-1}\) yields an element of weight at most \(d'\) in \(\N(\S)\setminus\S\) (resp.\ \(\S\)).
    
    \(d'\leq hd\). There is an element of weight \(d\) in \(\N(\S)\setminus\S\) (or \(\S\) if \(k=0\)). \rref{Lemma}{lemma:rhopauli} tells us that applying \(\Psi_{n,h}\) results in an element of weight at most \(hd\) in \(\N(\S')\setminus\S'\) (resp.\ \(\S'\)).
\end{proof}

Though \(\Psi_{n,h}\) induces a bijection, it does not preserve equivalence. See \rref{Section}{sec:hequiv} below.

\begin{definition}
Let \(\{e_1,\dots,e_h\}\) be a trace-orthogonal basis of \(\mathbb{F}_{2^h}\cong\mathbb{F}_2^h\) over \(\mathbb{F}_2\). Define
\begin{align*}
    \phi_{n,h}&:(\mathbb{F}_{2^h})^{2n}\to(\mathbb{F}_2)^{2hn}:\\
    &    (x_{11}e_1+\cdots+x_{1h}e_h,\dots,x_{2n,1}e_1+\cdots+x_{2n,h}e_h)\\&\mapsto(x_{11},\dots,x_{1h},\dots,x_{2n,1},\dots,x_{2n,h}).
\end{align*}
\end{definition}

\begin{corollary}\label{cor:commutativediagram}
    The following diagram commutes:
    \begin{center}\begin{tikzcd}
    \P_{n,h} \arrow[d, "\tau_{n,h}"] \arrow[r, "\psi_{n,h}"] &[20mm] \P_{hn,1} \arrow[d, "\tau_{hn,1}"]\\
    \FF_{2^h}^{2n} \arrow[r, "\phi_{n,h}"] & \FF_2^{2hn}
\end{tikzcd}\end{center}
\end{corollary}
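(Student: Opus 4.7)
The plan is to chase an arbitrary Pauli operator around the square and check that the two compositions produce the same vector in $\FF_2^{2hn}$. Since both $\tau_{n,h}$ and $\tau_{hn,1}$ are defined by stripping off the phase factor $c$ and reading off the $X$- and $Z$-exponents in a fixed order, and since $\phi_{n,h}$ and $\psi_{n,h}$ were essentially designed to match each other under the trace-orthogonal basis $\{e_1,\dots,e_h\}$, the verification should be a direct computation.

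Concretely, I would start with a general element
\[E=cX(a_1)Z(b_1)\otimes\cdots\otimes X(a_n)Z(b_n)\in\P_{n,h},\]
and expand each coordinate in the trace-orthogonal basis as $a_j=\sum_{i=1}^h a_{ji}e_i$ and $b_j=\sum_{i=1}^h b_{ji}e_i$ with $a_{ji},b_{ji}\in\FF_2$. Going along the top-right path, \rref{Lemma}{lemma:rhopauli} gives
\[\psi_{n,h}(E)=cX(a_{11})Z(b_{11})\otimes\cdots\otimes X(a_{1h})Z(b_{1h})\otimes\cdots\otimes X(a_{n1})Z(b_{n1})\otimes\cdots\otimes X(a_{nh})Z(b_{nh}),\]
and then $\tau_{hn,1}$ reads off the coefficients as
\[(a_{11},\dots,a_{1h},\dots,a_{n1},\dots,a_{nh}\,\vert\,b_{11},\dots,b_{1h},\dots,b_{n1},\dots,b_{nh}).\]
Along the bottom-left path, $\tau_{n,h}(E)=(a_1,\dots,a_n\,\vert\,b_1,\dots,b_n)\in\FF_{2^h}^{2n}$ and applying $\phi_{n,h}$ expands each $\FF_{2^h}$-coordinate into its $h$ basis coefficients, giving exactly the same tuple. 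Hence $\tau_{hn,1}\circ\psi_{n,h}=\phi_{n,h}\circ\tau_{n,h}$.

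There is no real obstacle here beyond index bookkeeping: the content of the corollary is already packaged inside \rref{Lemma}{lemma:rhopauli}, which tells us that $\psi_{n,h}$ acts on the $X$- and $Z$-exponents precisely by decomposition in the trace-orthogonal basis, i.e.\ by $\phi_{n,h}$. The only mildly delicate point is making sure the ordering conventions for $\phi_{n,h}$ (iterate over $j$, then over $i$) agree with those used when writing down $\psi_{n,h}(E)$ in \rref{Lemma}{lemma:rhopauli} and when reading off $\tau_{hn,1}$; once that is matched, equality is visible on the nose.
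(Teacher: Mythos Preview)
Your argument is correct and is exactly the approach the paper takes: its proof simply says the result follows from \rref{Lemma}{lemma:rhopauli} and the definitions, and your diagram chase unpacks precisely that. The only difference is that you have written out explicitly the index bookkeeping that the paper leaves implicit.
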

\begin{proof}
    This follows from \rref{Lemma}{lemma:rhopauli} and from the definitions.
\end{proof}

\begin{corollary}
    \(\phi_{n,h}\) induces a bijection between symplectic self-orthogonal additive codes over \(\FF_{2^h}\) of length \(2n\) and size \(2^{h(n-k)}\) and binary symplectic self-orthogonal additive codes of length \(2hn\) and size \(2^{h(n-k)}\). In particular, \(\phi_{n,h}\) preserves the trace-symplectic inner product.
\end{corollary}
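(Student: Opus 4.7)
The plan is to leverage the commutative diagram of \rref{Corollary}{cor:commutativediagram} together with \rref{Lemma}{lemma:commute}, which translates commutativity in the Pauli group into orthogonality with respect to the trace-symplectic form. Once I have that $\phi_{n,h}$ is an isometry, the bijection of self-orthogonal codes falls out.

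First, I would note that $\phi_{n,h}$ is an $\FF_2$-linear bijection, being nothing but the coordinate-wise instance of the basis-induced isomorphism $\FF_{2^h}\cong\FF_2^h$. In particular it maps additive (i.e.\ $\FF_2$-linear) subcodes bijectively onto additive subcodes and preserves cardinalities, so the size condition $2^{h(n-k)}$ matches automatically on both sides.

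Second, for preservation of the trace-symplectic inner product, take $v,v'\in\FF_{2^h}^{2n}$ and choose Pauli operators $E,E'\in\P_{n,h}$ with $\tau_{n,h}(E)=v$ and $\tau_{n,h}(E')=v'$. By \rref{Lemma}{lemma:commute}, $\langle v,v'\rangle_s=0$ iff $EE'=E'E$. Since $\psi_{n,h}$ is multiplicative (\rref{Lemma}{lemma:rhomultiplicative}) and invertible, this is in turn equivalent to $\psi_{n,h}(E)$ and $\psi_{n,h}(E')$ commuting, which by \rref{Lemma}{lemma:commute} applied inside $\P_{hn,1}$ is the same as $\langle \tau_{hn,1}(\psi_{n,h}(E)),\tau_{hn,1}(\psi_{n,h}(E'))\rangle_s=0$. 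The commutative diagram of \rref{Corollary}{cor:commutativediagram} identifies this last quantity with $\langle \phi_{n,h}(v),\phi_{n,h}(v')\rangle_s$. As both inner products take values in $\FF_2$, preservation of the vanishing locus upgrades to preservation of the value, so $\phi_{n,h}$ is an isometry for the trace-symplectic form.

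Third, with the isometry in hand, the equivalence $C\subseteq C^{\perp_s}\Longleftrightarrow \phi_{n,h}(C)\subseteq \phi_{n,h}(C)^{\perp_s}$ is immediate, which yields the claimed bijection. I do not anticipate a serious obstacle, since the heavy lifting has already been done in \rref{Theorem}{thm:main} and \rref{Corollary}{cor:commutativediagram}; the only mildly delicate point is the $\FF_2$-valuedness observation that lets ``preserves zero'' upgrade to ``preserves the value''. As a sanity check, a direct expansion of $\tr(a\cdot b'+a'\cdot b)$ in the trace-orthogonal basis, using $\tr(e_ie_j)=\delta_{ij}$, recovers the binary symplectic inner product on the nose, so the two approaches agree.
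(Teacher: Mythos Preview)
Your proof is correct. The route differs from the paper's: the paper invokes \rref{Theorem}{thm:ketkar} twice to pass from self-orthogonal codes to stabilisers and back, with \rref{Theorem}{thm:main} furnishing the bijection at the level of stabilisers, and then reads off $C'=\phi_{n,h}(C)$ from \rref{Corollary}{cor:commutativediagram}. You instead establish the isometry property of $\phi_{n,h}$ first---lifting a pair of vectors to Pauli operators, using \rref{Lemma}{lemma:commute} on each side of the commutative diagram, and the $\FF_2$-valuedness to pass from ``preserves zero'' to ``preserves the value''---and then deduce the bijection of self-orthogonal codes as an immediate consequence. This is essentially the alternative approach the paper itself flags just after its proof (where it notes one could instead show directly that $\phi_{n,h}$ preserves the trace-symplectic form via trace-orthogonality); your version replaces that direct trace computation with a pass through the Pauli group. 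The paper's route packages the argument at the level of codes and stabilisers, while yours has the advantage of proving the ``in particular'' clause explicitly rather than leaving it implicit.
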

\begin{proof}
    \(C\) is a symplectic self-orthogonal additive code \(C\subseteq\FF_{2^h}^{2n}\) of size \(2^{h(n-k)}\) if and only if \(\S:=\tau_{n,h}^{-1}(C)\)
    is the stabiliser of an \([\![n,k,d]\!]_{2^h}\) code, by \rref{Theorem}{thm:ketkar}. By \rref{Theorem}{thm:main}, this is equivalent to \(\S':=\psi_{n,h}(\S)\) being the stabiliser of an \([\![hn,hk,d']\!]_2\) code. We can now apply \rref{Theorem}{thm:ketkar} again to see that, equivalently, \(C':=\tau_{hn,1}(\S')\) is a symplectic self-orthogonal additive code \(C'\subseteq\FF_2^{2hn}\) of size \(2^{h(n-k)}\). Finally, note that \(C'=\phi_{n,h}(C)\) by \rref{Corollary}{cor:commutativediagram}.
\end{proof}

Alternatively, we could have proved the above corollary by showing that \(\phi_{n,h}\) preserves the trace-symplectic inner product (which follows from the trace-orthogonality), and then deducing \rref{Theorem}{thm:main} from it by using \rref{Corollary}{cor:commutativediagram}.

The above theorem provides a construction for new quantum codes from old ones. Applying \(\Psi_{n,h}\) corresponds to the following operation. We start from an \([\![n,k,d]\!]_{2^h}\) code and split the columns of the corresponding symplectic self-orthogonal code according to a trace orthonormal basis to obtain an \([\![hn,hk,d']\!]_2\) code.

\begin{example}
    Let \(\alpha\in\FF_4\setminus\{0,1\}\). Then \(\alpha^2=\alpha+1\) and \(\{\alpha,\alpha^2\}\) is a trace-orthogonal basis  of \(\FF_4\) over \(\FF_2\). Consider the \([\![2,1,2]\!]_4\) code with stabiliser matrix
    \[\left(\begin{array}{@{}cc|cc@{}}
    1&1&\alpha&0\\
    0&1&1&1
    \end{array}\right).\]
    Applying \(\Psi_{2,2}\) on this code, or, in other words, applying \(\phi_{2,2}\) on the rows of this matrix, results in the \([\![4,2,2]\!]_2\) code with stabiliser matrix
    \[\left(\begin{array}{@{}cc;{2pt/1pt}cc|cc;{2pt/1pt}cc@{}}
    1&1&1&1&1&0&0&0\\
    0&0&1&1&1&1&1&1
    \end{array}\right).\]
\end{example}

We can also apply \(\Psi_{n,h}^{-1}\): we start from an \([\![hn,hk,d']\!]_2\) code and partition the columns into sets of \(h\) columns which we merge together to obtain an \([\![n,k,d]\!]_{2^h}\) code.

\begin{example}
    Consider the \([\![12,6,3]\!]_2\) code with stabiliser matrix
    \[\left(\begin{array}{@{}cccccccccccc|cccccccccccc@{}}
    1&0&0&0&0&0&0&1&0&1&0&1&0&0&1&0&1&1&1&0&0&0&0&0\\
    0&1&0&0&0&0&1&0&1&0&1&0&1&0&0&1&1&1&0&1&0&1&0&0\\
    0&0&1&0&0&0&0&1&0&1&1&0&0&1&0&0&1&0&0&0&1&0&0&1\\
    0&0&0&1&0&0&0&1&1&0&0&0&1&0&1&0&1&0&1&1&0&0&1&0\\
    0&0&0&0&1&0&1&1&0&0&0&1&1&1&1&1&0&1&0&0&0&0&1&1\\
    0&0&0&0&0&1&0&0&0&0&1&0&0&0&1&1&1&0&1&1&1&1&1&1
    \end{array}\right).\]
    
    If we merge every two columns with \(\phi_{6,2}^{-1}\) according to the basis \(\{\alpha,\alpha^2\}\) from before, we get the stabiliser matrix
    \[\left(\begin{array}{@{}cccccc|cccccc@{}}
    \alpha&0&0&\alpha^2&\alpha^2&\alpha^2&0&\alpha&1&\alpha&0&0\\
    \alpha^2&0&0&\alpha&\alpha&\alpha&\alpha&\alpha^2&1&\alpha^2&\alpha^2&0\\
    0&\alpha&0&\alpha^2&\alpha^2&\alpha&\alpha^2&0&\alpha&0&\alpha&\alpha^2\\
    0&\alpha^2&0&\alpha^2&\alpha&0&\alpha&\alpha&\alpha&1&0&\alpha\\
    0&0&\alpha&1&0&\alpha^2&1&1&\alpha^2&0&0&1\\
    0&0&\alpha^2&0&0&\alpha&0&1&\alpha&1&1&1
    \end{array}\right)\]
    of a \([\![6,3,2]\!]_4\) code.
    Let \(\beta\in\FF_8\) such that \(\beta^3=\beta^2+1\). Then \(\{\beta,\beta^2,\beta^4\}\) is a trace-orthogonal basis of \(\FF_8\) over \(\FF_2\). If we merge every three columns according to this basis, we get the stabiliser matrix
    \[\left(\begin{array}{@{}cccc|cccc@{}}
    \beta&0&\beta^2&\beta^3&\beta^4&\beta^5&\beta&0\\
    \beta^2&0&\beta^3&\beta^2&\beta&1&\beta^2&\beta\\
    \beta^4&0&\beta^2&\beta^6&\beta^2&\beta^2&\beta^4&\beta^4\\
    0&\beta&\beta^5&0&\beta^3&\beta^2&\beta^6&\beta^2\\
    0&\beta^2&\beta^6&\beta^4&1&\beta^3&0&\beta^5\\
    0&\beta^4&0&\beta^2&\beta^4&\beta^6&1&1
    \end{array}\right).\]
    of a \([\![4,2,2]\!]_8\) code.
\end{example}

In a similar fashion, we can construct a \([\![6,2,3]\!]_4\) code and a \([\![3,1,2]\!]_{16}\) code from a \([\![12,4,4]\!]_2\) code. One can also construct a \([\![7,1,3]\!]_4\) code from a \([\![14,2,5]\!]_2\) code and a \([\![8,0,4]\!]_4\) code from a \([\![16,0,6]\!]_2\) code.

Not all partitions of the columns into \(h\)-sets are interesting however. We are only interested in those \(h\)-sets that contain linearly independent columns. Otherwise, \(\N(\S)\) contains elements of weight one. See \rref{Lemma}{lemma:generalposition} below.

The minimum distance can be calculated exactly using \rref{Definition}{def:parameters} or via \rref{Theorem}{thm:ketkar}. \rref{Theorem}{thm:main} already indicates which values of the minimum distance we might expect.

We can also combine both maps \(\Psi_{n,h}\) and \(\Psi_{n',h'}^{-1}\) for possibly different \(n,h,n'\) and \(h'\).

\begin{corollary}
    Let \(\pi\) denote a permutation on \(hn\) elements. Then \(\Psi_{n',h'}^{-1}\circ\pi\circ\Psi_{n,h}\) maps an \([\![n,k,d]\!]_{2^h}\) code to an \([\![n',k',d']\!]_{2^{h'}}\) code, where $h'n'=hn$ and \(h'k'=hk\).
\end{corollary}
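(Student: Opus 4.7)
The plan is to decompose $\Psi_{n',h'}^{-1}\circ\pi\circ\Psi_{n,h}$ into three stages and track the code parameters through each, invoking Theorem~\ref{thm:main} once in each direction and absorbing the permutation as a trivial equivalence of binary stabiliser codes.

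First, starting from the $[\![n,k,d]\!]_{2^h}$ code $\Q$, apply $\Psi_{n,h}$. By the forward direction of Theorem~\ref{thm:main}, $\Psi_{n,h}(\Q)$ is an $[\![hn,hk,d'']\!]_2$ code for some $d''$ with $d\le d''\le hd$, and its stabiliser is $\psi_{n,h}(\S)\subseteq\P_{hn,1}$. Second, apply the permutation $\pi$ on the $hn$ tensor positions. This is precisely operation (i) of Definition~\ref{def:equivalent}, so by Lemma~\ref{lemma:equivalent} the image $\pi(\Psi_{n,h}(\Q))$ is still an $[\![hn,hk,d'']\!]_2$ code.

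Third, since $h'n'=hn$ the Hilbert space $\mathcal{H}_{hn,1}$ coincides with $\mathcal{H}_{h'n',1}$, so $\Psi_{n',h'}^{-1}$ is a well-defined linear bijection into $\mathcal{H}_{n',h'}$. Applying Theorem~\ref{thm:main} in the backward direction for the pair $(n',h')$ to the $[\![h'n',hk,d'']\!]_2$ code $\pi(\Psi_{n,h}(\Q))$ produces an $[\![n',k',d']\!]_{2^{h'}}$ code for some $d'$, where the identity $h'k'=hk$ is forced by matching dimensions $2^{hk}=(2^{h'})^{k'}$.

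The main (modest) obstacle is bookkeeping rather than content: one must keep in mind that $k'$ need not be an integer (only $h'k'$ must be), as emphasised after Definition~\ref{def:parameters}, and that the domain/codomain identification $\mathcal{H}_{hn,1}=\mathcal{H}_{h'n',1}$ is exactly what the hypothesis $h'n'=hn$ ensures — this is what makes the composition well-defined in the first place. No explicit distance bound is claimed in the statement, so one does not even need to combine the bounds $d\le d''\le hd$ and $d'\le d''\le h'd'$ coming from the two applications of Theorem~\ref{thm:main}.
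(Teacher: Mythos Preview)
Your proof is correct and follows exactly the same approach as the paper: apply \rref{Theorem}{thm:main} to pass to a binary code, observe that a permutation of tensor positions yields an equivalent binary code, and then apply \rref{Theorem}{thm:main} in the reverse direction for the pair $(n',h')$. The paper's proof is just a one-line summary of what you wrote out in detail.
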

\begin{proof}
    This follows from \rref{Theorem}{thm:main}, and the fact that a permutation of tensor positions gives an equivalent code.
\end{proof}


\subsection{Equivalence of stabiliser codes over fields of even order}\label{sec:hequiv}



Although \(\Psi_{n,h}\) is a bijection between \([\![n,k,d]\!]_{2^h}\) codes and \([\![hn,hk,d']\!]_2\) codes, it does not necessarily preserve the equivalence of codes. 
In this section, we establish what this equivalence translates to in the underlying binary code, and in the corresponding symplectic self-orthogonal code.


We start by showing that a local Clifford operator on \(\P_{h,n}\) is equivalent to the tensor product of \(n\) (global) Clifford operators on \(\P_{h,1}\).

\begin{lemma}\label{lemma:clifford}
    \(\psi_{n,h}\left(\C_{n,h}^l\right)=\C_{h,1}^{\otimes n}\).
\end{lemma}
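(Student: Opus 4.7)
The plan is to reduce to the case $n=1$ and then apply the two preceding lemmas almost mechanically. Note from the definition of $\Psi_{n,h}$ that it acts factor-by-factor on the tensor basis, so $\Psi_{n,h}=\Psi_{1,h}^{\otimes n}$. Consequently
\[\psi_{n,h}(A_1\otimes\cdots\otimes A_n)=\psi_{1,h}(A_1)\otimes\cdots\otimes\psi_{1,h}(A_n).\]
Since by definition $\C_{n,h}^l=(\C_{1,h})^{\otimes n}$, the claim reduces to the single-quqit statement $\psi_{1,h}(\C_{1,h})=\C_{h,1}$.

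For the inclusion $\psi_{1,h}(\C_{1,h})\subseteq\C_{h,1}$, take $U\in\C_{1,h}$ and verify the two defining properties of a Clifford operator. Unitarity follows from \rref{Lemma}{lemma:rhomultiplicative}:
\[\psi_{1,h}(U)\psi_{1,h}(U)^\dagger=\psi_{1,h}(U)\psi_{1,h}(U^\dagger)=\psi_{1,h}(UU^\dagger)=\psi_{1,h}(\id)=\id.\]
For the normalisation of the Pauli group, \rref{Lemma}{lemma:rhopauli} gives $\psi_{1,h}(\P_{1,h})=\P_{h,1}$, so
\[\psi_{1,h}(U)\,\P_{h,1}\,\psi_{1,h}(U)^\dagger=\psi_{1,h}(U)\,\psi_{1,h}(\P_{1,h})\,\psi_{1,h}(U^\dagger)=\psi_{1,h}(U\P_{1,h}U^\dagger)=\psi_{1,h}(\P_{1,h})=\P_{h,1}.\]

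For the reverse inclusion $\C_{h,1}\subseteq\psi_{1,h}(\C_{1,h})$, observe that $\psi_{1,h}$ is invertible with inverse $V\mapsto\Psi_{1,h}^{-1}V\Psi_{1,h}$, which satisfies the exact same multiplicativity property and also swaps $\P_{h,1}$ with $\P_{1,h}$ by \rref{Lemma}{lemma:rhopauli}. So the identical argument applied to any $V\in\C_{h,1}$ produces a pre-image $U:=\psi_{1,h}^{-1}(V)\in\C_{1,h}$.

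There is no real obstacle here; the only mildly delicate point is the tensor-factorisation $\psi_{n,h}=\psi_{1,h}^{\otimes n}$, which one needs to justify from the fact that $\Psi_{n,h}$ was defined tensor-factor-wise on the basis $\{\ket{x}\mid x\in\FF_{2^h}\}^{\otimes n}$. Everything else is a direct consequence of \rref{Lemma}{lemma:rhomultiplicative} and \rref{Lemma}{lemma:rhopauli}.
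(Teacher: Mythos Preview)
Your proof is correct and follows essentially the same approach as the paper: reduce to $n=1$ via the tensor factorisation $\psi_{n,h}=\psi_{1,h}^{\otimes n}$ and $\C_{n,h}^l=\C_{1,h}^{\otimes n}$, then use \rref{Lemma}{lemma:rhomultiplicative} and \rref{Lemma}{lemma:rhopauli} to identify $\C_{1,h}$ with $\C_{h,1}$. The paper compresses your two inclusions into a single chain of equivalences, but the content is identical.
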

\begin{proof}
    It suffices to show that \(\psi_{1,h}\left(\C_{1,h}\right)=\C_{h,1}\), since \(\psi_{n,h}=\psi_{1,h}^{\otimes n}\) and \(\C_{n,h}^l=\C_{1,h}^{\otimes n}\).
    Let \(U\) be an operator on \(\mathcal{H}_{1,h}\). Then
    \begin{align*}
        U\in\C_{1,h}&\Leftrightarrow UU^\dagger=\id\text{ and }U\P_{1,h}U^\dagger=\P_{1,h}\\
        &\Leftrightarrow \psi_{1,h}(U)\psi_{1,h}(U)^\dagger=\id\text{ and }\psi_{1,h}(U)\P_{h,1}\psi_{1,h}(U)^\dagger=\P_{h,1}\\
        &\Leftrightarrow\psi_{1,h}(U)\in\C_{h,1}
    \end{align*}
    by \rref{Lemma}{lemma:rhomultiplicative} and \rref{Lemma}{lemma:rhopauli}.
\end{proof}

\begin{theorem}\label{thm:hequiv}
    Two \([\![n,k,d]\!]_{2^h}\) codes are equivalent if and only if the stabilisers of the corresponding \([\![hn,hk,d']\!]_2\) codes, obtained by \(\psi_{n,h}\), can be converted into one another by a sequence of the following operations:
    \begin{enumerate}[(i)]
        \item A permutation on the \(n\) groups of \(h\)-fold tensors in the tensor product.
        \item Multiplying elements by \(\pm1\) while preserving the group structure.
        \item Conjugation with a Clifford operator of \(\left(\C_{h,1}\right)^{\otimes n}\).
    \end{enumerate}
\end{theorem}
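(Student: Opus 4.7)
The plan is to translate each of the three equivalence operations in \rref{Definition}{def:equivalent} directly through the bijection induced by $\psi_{n,h}$, showing that operation $(i)$--$(iii)$ on the $2^h$-side becomes the correspondingly numbered operation on the binary side, and then to invoke bijectivity to get both implications at once.

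First I would handle operation $(i)$. A permutation $\sigma$ of the $n$ tensor positions of $\mathcal{H}_{n,h}$ sends a Pauli operator $\bigotimes_j X(a_j)Z(b_j)$ to $\bigotimes_j X(a_{\sigma(j)})Z(b_{\sigma(j)})$. Combining this with the explicit description of $\psi_{n,h}$ in \rref{Lemma}{lemma:rhopauli}, which splits each factor $X(a_j)Z(b_j)$ into an $h$-fold tensor block via the trace-orthogonal basis $\{e_1,\dots,e_h\}$, shows that $\sigma$ corresponds exactly to the permutation of $hn$ positions on $\mathcal{H}_{hn,1}$ that permutes the $n$ consecutive blocks of length $h$ according to $\sigma$. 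This is precisely what the theorem calls a permutation of the $n$ groups of $h$-fold tensors. Operation $(ii)$ is immediate from \rref{Lemma}{lemma:rhomultiplicative}: since $\psi_{n,h}(-E)=-\psi_{n,h}(E)$, multiplying elements of $\S$ by $\pm1$ matches exactly the same operation on $\psi_{n,h}(\S)$, with the group structure preserved because $\psi_{n,h}$ is a group morphism.

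For operation $(iii)$, I would apply \rref{Lemma}{lemma:rhomultiplicative} once more to obtain $\psi_{n,h}(U\S U^\dagger)=\psi_{n,h}(U)\,\psi_{n,h}(\S)\,\psi_{n,h}(U)^\dagger$, and then invoke \rref{Lemma}{lemma:clifford} which identifies $\psi_{n,h}(\C_{n,h}^l)$ with $(\C_{h,1})^{\otimes n}$. Thus local Clifford conjugation on the $2^h$-side is in bijective correspondence with conjugation by elements of $(\C_{h,1})^{\otimes n}$ on the binary side. The ``only if'' direction is then obtained by translating any equivalence-witnessing sequence of operations step by step through $\psi_{n,h}$. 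The ``if'' direction follows by running the same translation in reverse, using that $\psi_{n,h}$ is bijective on stabilisers by \rref{Theorem}{thm:main} together with the surjectivity half of \rref{Lemma}{lemma:clifford}.

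The main point requiring care --- really the only non-routine one --- is the permutation case: a generic permutation of the $hn$ binary positions does not arise from a permutation of the $n$ quqit positions, only those permutations that respect the block structure induced by $\Psi_{n,h}$ do. The theorem is phrased to accommodate exactly this restriction, so once the block structure is made explicit from the definition of $\Psi_{n,h}$, the correspondence is clean and the remainder of the argument is a direct bookkeeping exercise using the lemmas already established.
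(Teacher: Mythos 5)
Your proposal is correct and follows essentially the same route as the paper: each of the three equivalence operations of \rref{Definition}{def:equivalent} is translated through \(\psi_{n,h}\) using \rref{Lemma}{lemma:rhopauli} for permutations, \rref{Lemma}{lemma:rhomultiplicative} for signs, and \rref{Lemma}{lemma:rhomultiplicative} together with \rref{Lemma}{lemma:clifford} for local Clifford conjugation, with bijectivity giving both directions. Your explicit remark that only block-respecting permutations of the \(hn\) binary positions arise is a correct and welcome clarification of a point the paper leaves implicit.
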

\begin{proof}
    We show that the operations (i)--(iii) in \rref{Definition}{def:equivalent} for equivalent \([\![n,k,d]\!]_{2^h}\) codes correspond to the operations stated above for \([\![hn,hk,d']\!]_2\) codes.
    \begin{enumerate}[(i)]
        \item This follows from \rref{Lemma}{lemma:rhopauli}.
        \item By linearity of \(\psi_{n,h}\) (\rref{Lemma}{lemma:rhomultiplicative}).
        \item Suppose that \(\S,\S'\leqslant\P_{n,h}\) are stabilisers. Then
        \begin{align*}
            &\S'=U\S U^\dagger\text{ for some }U\in\C_{n,h}^l\\
            &\Leftrightarrow\psi_{n,h}(\S')=\psi_{n,h}(U)\psi_{n,h}(\S)\psi_{n,h}(U)^\dagger\text{ for some }U\in\C_{n,h}^l\\
            &\Leftrightarrow\psi_{n,h}(\S')=U\psi_{n,h}(\S)U^\dagger\text{ for some }U\in\C_{h,1}^{\otimes n}
        \end{align*}
        by \rref{Lemma}{lemma:rhomultiplicative} and \rref{Lemma}{lemma:clifford}.
    \end{enumerate}
\end{proof}

The following theorem tells us what a conjugation with a Clifford operator looks like for the corresponding additive symplectic self-orthogonal code. Here, a \emph{symplectic transformation} on \(\FF_2^{2n}\) is a change of basis that preserves the symplectic inner product. 

\begin{theorem}[\cite{cliffordequiv}]\label{thm:cliffordequiv}
    Binary stabilisers are conjugated by a Clifford operator if and only if the corresponding symplectic self-orthogonal codes are the same up to a symplectic transformation.
\end{theorem}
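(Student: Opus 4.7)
The plan is to produce, for each Clifford operator $U \in \C_{n,1}$, a canonical symplectic transformation $\Phi(U)$ of $\FF_2^{2n}$, and then to show that every symplectic transformation arises in this way. For $U \in \C_{n,1}$ and $E \in \P_{n,1}$, set $\Phi(U)(\tau_{n,1}(E)) := \tau_{n,1}(UEU^\dagger)$. This is well-defined on $\FF_2^{2n}$ because the four preimages of a vector under $\tau_{n,1}$ differ only by a scalar phase, which is preserved under conjugation and then discarded by $\tau_{n,1}$; it is $\FF_2$-linear by \rref{Lemma}{lemma:tau}; it is bijective with inverse $\Phi(U^\dagger)$; and it preserves the trace-symplectic inner product because conjugation preserves the commutator $E_1E_2E_1^{-1}E_2^{-1} \in \{\pm \id\}$, which by \rref{Lemma}{lemma:commute} encodes $\langle \tau_{n,1}(E_1), \tau_{n,1}(E_2)\rangle_s$.

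The forward direction is then immediate: if $\S' = U\S U^\dagger$, then $C' = \tau_{n,1}(U\S U^\dagger) = \Phi(U)(\tau_{n,1}(\S)) = \Phi(U)(C)$, and $\Phi(U)$ is a symplectic transformation.

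For the converse, the key external ingredient is that $\Phi \colon \C_{n,1} \to \mathrm{Sp}(2n,\FF_2)$ is surjective; this is a classical fact, witnessed by the usual generators $H$, $S$ and $\mathrm{CNOT}$ of the Clifford group, whose induced transformations generate $\mathrm{Sp}(2n,\FF_2)$. Given $\phi \in \mathrm{Sp}(2n,\FF_2)$ with $C' = \phi(C)$, choose $U$ with $\Phi(U) = \phi$; then $\tau_{n,1}(U\S U^\dagger) = \phi(C) = C' = \tau_{n,1}(\S')$. It remains to adjust phases, since equality of $\tau$-images only forces the corresponding generators to agree up to signs (by \rref{Lemma}{lemma:involutions}, both stabilisers consist of involutions, so the only ambiguity is $\pm 1$). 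Writing the required sign discrepancies as $\epsilon_j \in \FF_2$ on a generating set $F_1,\dots,F_r$ of $U\S U^\dagger$, the linear independence of $\tau_{n,1}(F_1),\dots,\tau_{n,1}(F_r)$ lets us solve the linear system $\langle \tau_{n,1}(P), \tau_{n,1}(F_j)\rangle_s = \epsilon_j$ for some Pauli operator $P$, and then $PU$ satisfies $(PU)\S(PU)^\dagger = \S'$ on the nose.

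The main obstacle is the surjectivity of $\Phi$ onto $\mathrm{Sp}(2n,\FF_2)$: establishing it requires exhibiting concrete Clifford operators realising a generating set of symplectic transformations, which is the substantive content of the theorem and the reason it is attributed to an external reference. The remaining ingredients — well-definedness, linearity, preservation of the symplectic form, and the final phase adjustment — are formal consequences of \rref{Lemma}{lemma:tau}, \rref{Lemma}{lemma:commute} and \rref{Lemma}{lemma:involutions}.
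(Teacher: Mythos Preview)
Your proposal is correct, and the forward direction is essentially identical to the paper's argument. The converse, however, takes a genuinely different route. You invoke the surjectivity of $\Phi\colon\C_{n,1}\to\mathrm{Sp}(2n,\FF_2)$ as a black box (witnessed by $H$, $S$, $\mathrm{CNOT}$) and then perform an explicit Pauli correction to fix the sign discrepancies on generators. The paper's appendix instead \emph{proves} this surjectivity from scratch: given the symplectic transformation $\sigma$, it builds a group homomorphism $h$ on the generators $X_i,Z_i$ by choosing order-two representatives in $\tau^{-1}(\sigma(\tau(\cdot)))$, extends $h$ to an algebra automorphism of the full matrix algebra, and then invokes the Skolem--Noether theorem to conclude that $h$ is inner, i.e.\ $h(E)=UEU^{-1}$ for some $U$, which is then normalised to be unitary. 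Your approach is the standard quantum-information argument and is arguably more transparent; it also makes the final phase adjustment explicit, a step the paper's proof leaves implicit. The paper's approach buys a self-contained algebraic argument that does not rely on knowing a generating set for $\mathrm{Sp}(2n,\FF_2)$, at the cost of importing Skolem--Noether.
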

\begin{proof}
    This was proved in \cite{cliffordequiv} more generally for stabiliser codes over \(\ZZ/d\ZZ\). If \(d=2\), these coincide with binary stabiliser codes, see also \rref{Remark}{remark:pauli}.
    
    An alternative proof can be found in \ref{app:skolem}.
\end{proof}



\begin{corollary}\label{cor:stabequiv}
    Two \([\![n,k,d]\!]_{2^h}\) codes are equivalent if and only if the stabiliser matrices of the corresponding \([\![hn,hk,d']\!]_2\) codes, obtained by \(\phi_{n,h}\), are the same up to:
    \begin{enumerate}[(i)]
        \item Row operations (i.e.\ left multiplication with an invertible \(h(n-k)\times h(n-k)\) matrix).
        \item Permutations on the \(n\) blocks of \(2h\) columns.
        \item Symplectic transformations on the \(2h\) columns of one of these \(n\) blocks.
    \end{enumerate}
    Here, a ``block'' denotes a set of \(h\) consecutive columns in the \(X\) part (obtained from one column by applying \(\phi_{n,h}\)) and the corresponding \(h\) columns in the \(Z\) part.
\end{corollary}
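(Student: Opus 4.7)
The plan is to translate each of the three equivalence operations in Theorem \ref{thm:hequiv} into the statement of the corollary via the commutative diagram of Corollary \ref{cor:commutativediagram}. Since the map $\phi_{n,h}$ is precisely the row-wise action of $\tau_{hn,1}\circ\psi_{n,h}\circ\tau_{n,h}^{-1}$, the stabiliser matrix of the $[\![hn,hk,d']\!]_2$ code obtained via $\psi_{n,h}$ is nothing but the matrix whose rows are $\phi_{n,h}(\tau_{n,h}(E_j))$ for a generating set $\{E_1,\dots,E_r\}$ of $\S$.

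First, I would dispense with item (i) of the corollary. The stabiliser matrix depends on the choice of generating set for $\S'$, so any two stabiliser matrices representing the same stabiliser differ exactly by left multiplication with an invertible $h(n-k)\times h(n-k)$ matrix over $\FF_2$. Thus row operations correspond to no change in the stabiliser itself, and can be applied freely on either side of the equivalence. I would also use this to absorb operation (ii) of Theorem \ref{thm:hequiv}: since $\tau_{hn,1}$ (and hence $\phi_{n,h}$) ignores the phase factor $c\in\{\pm1,\pm i\}$, multiplying group elements by $\pm1$ leaves the stabiliser matrix unchanged.

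Next, I would translate the remaining two operations of Theorem \ref{thm:hequiv}. Operation (i), a permutation on the $n$ groups of $h$-fold tensors of $\P_{hn,1}$, by the explicit formula in Lemma \ref{lemma:rhopauli} and the definition of $\tau_{hn,1}$, permutes the columns of the stabiliser matrix blockwise: the $j$-th block of $h$ columns in the $X$-part and the $j$-th block of $h$ columns in the $Z$-part are moved together. This is precisely item (ii) of the corollary. For operation (iii), a conjugation by $U=U_1\otimes\cdots\otimes U_n$ with each $U_j\in\C_{h,1}$ decomposes as a sequence of conjugations by $\id\otimes\cdots\otimes U_j\otimes\cdots\otimes\id$, each acting nontrivially only on the $j$-th group of $h$ tensor positions. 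By Theorem \ref{thm:cliffordequiv} (applied to the binary stabiliser restricted to those $h$ positions, with the other positions untouched), such a conjugation induces a symplectic transformation on the $2h$ columns of the $j$-th block, and leaves all other columns fixed. This is precisely item (iii).

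The converse direction is immediate by the same dictionary: each of the three operations in the corollary is realised by an operation in Theorem \ref{thm:hequiv}. The main point to verify carefully is that the ``one block at a time'' symplectic transformations of item (iii) really cover all possible tensor products of local Cliffords; this follows because any $U\in(\C_{h,1})^{\otimes n}$ factors as a product of single-tensor-slot Cliffords, and by Theorem \ref{thm:cliffordequiv} each such factor corresponds to an arbitrary symplectic transformation on the associated $2h$-column block. Composing these recovers the full diagonal block action. The only mild subtlety is checking that Theorem \ref{thm:cliffordequiv} applies to the local factor in the presence of inactive tensor slots; this is immediate since a Clifford acting as the identity on a tensor factor induces the identity symplectic transformation on the corresponding columns.
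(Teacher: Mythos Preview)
Your proposal is correct and follows essentially the same approach as the paper: invoke \rref{Theorem}{thm:hequiv} and \rref{Theorem}{thm:cliffordequiv}, then observe that the choice of generating set accounts for row operations. You spell out considerably more detail than the paper does---in particular the explicit handling of phase factors via $\tau_{hn,1}$ and the block-by-block application of \rref{Theorem}{thm:cliffordequiv}---but the underlying argument is the same.
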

\begin{proof}
    This follows from \rref{Theorem}{thm:hequiv} and \rref{Theorem}{thm:cliffordequiv}. The generators are determined up to linear combinations, so the rows of the matrix are determined up to row operations.
\end{proof}

\begin{figure}[H]
    \centering
    \[
\begin{array}{c}
    \left(\begin{array}{@{}cccc@{}}
    &&&\\
    &\multicolumn{2}{c}{L}&\\
    &&&
    \end{array}\right)\\
    \\
    h(n-k)\times h(n-k)
\end{array}
\hspace{-10mm}
\begin{array}{c}
\cdot\\
\\
\\
\end{array}
\begin{array}{c}
    \circlearrowleft\qquad\qquad\qquad\circlearrowleft\\
    \left(\begin{array}{@{}ccc|ccc@{}}
    a_{11}&\cdots&a_{1,hn}&b_{11}&\cdots&b_{1,hn}\\
    \vdots&\ddots&\vdots&\vdots&\ddots&\vdots\\
    a_{r1}&\cdots&a_{r,hn}&b_{r1}&\cdots&b_{r,hn}
    \end{array}\right)\\
    \\
    h(n-k)\times2hn\\
    \\
\end{array}
\begin{array}{c}
\cdot\\
\\
\\
\end{array}
\begin{array}{c}
    \left(\begin{array}{@{}c@{}c@{}c|c@{}c@{}c@{}}
    R_1&&&S_1&&\\[-2mm]
    &\ddots&&&\ddots&\\
    &&R_n&&&S_n\\
    \hline
    T_1&&&U_1&&\\[-2mm]
    &\ddots&&&\ddots&\\[-2mm]
    &&T_n&&&U_n
    \end{array}\right)\\
    \\
    2hn\times2hn
\end{array}
\]
    \caption{Operations that give an equivalent stabiliser matrix, where \(L\) is invertible and each \(2h\times2h\) block matrix \(\begin{pmatrix}R_i&S_i\\T_i&U_i\end{pmatrix}\) is a symplectic matrix.}
\end{figure}


\section{The geometry of stabiliser codes over fields of even order}\label{sec:geometry}

Looking at stabiliser codes geometrically, provides a powerful tool to understand better what is going on. In fact, the previous results were found after investigating this link with finite geometry.

We refer to \cite{qeccandtheirgeom} for an overview of some links between quantum error-correcting codes and finite geometry.

For convenience, we will treat stabiliser codes with the same stabiliser matrix as the same code (though they differ in phase factors).

\subsection{Binary stabiliser codes as quantum sets of lines}

Let \(\Q\) be a binary stabiliser code with minimum distance \(d\geq2\). Recall that \(\Q\) can be represented by a stabiliser matrix, whose rows are the vectors \(\tau(S)\) for every \(S\) in a given generating set of the stabiliser (see \rref{Definition}{def:tau}). We can interpret the columns of this matrix as points of the projective space \(\PG(r-1,q)\). For each \(i=1,\dots,n\), the span of the \(i\)-th column and the \((n+i)\)-th column is a line (and not a point, because the columns are different as \(\N(\S)\) has no elements of weight one, see \cite[Lemma~3.6]{qeccandtheirgeom}). This set of lines defines a so-called \emph{quantum set of lines}, as defined below. This concept was introduced in \cite{GGMG} and refined in \cite{qeccandtheirgeom}. Vice versa, a quantum set of lines defines a stabiliser code by considering them as columns of a stabiliser matrix. This correspondence is made concrete in \rref{Theorem}{thm:quantumlines} below.

\begin{definition}\label{def:qsol}
    A \emph{quantum set of \(n\) lines} in \(\PG(r-1,2)\) is a set \(X\) of \(n\) lines spanning \(\PG(r-1,2)\) such that every codimension two subspace is skew to an even number of the lines.

    \begin{figure}[H]
    \centering
    \begin{tikzpicture}[scale=2]
\draw[rotate=120,ultra thick] (1.8,.3) ellipse (.5 and .9) {};
\draw[thick] (-2.1,.4) -- (-1.5,1.4);
\draw[thick] (-1.3,1.1) -- (-1,1.8);
\draw[thick] (-.8,1.5) -- (-.4,.9);
\draw[thick] (0,1) -- (.3,1.8);
\draw[thick] (.5,.7) -- (.7,1.6);
\draw[fill] (-1.5,1.4) {} circle (.03)
(-1.3,1.1) {} circle (.03)
(-1.15,1.45) {} circle (.03)
(-1,1.8) {} circle (.03)
(-.8,1.5) {} circle (.03);
\node[scale=.8] at (-1.7,2.04) {\small\(\PG(r-3,2)\)};
\node at (.6,.3) {\(\PG(r-1,2)\)};
\node[scale=3,rotate=-90] at (.45,2) {\(\{\)};
\node at (.5,2.3) {even \#};
\draw (-2.5,.1) rectangle (1.25,2.6);
\end{tikzpicture}
    \end{figure}

    If \(k=0\), we define the \emph{minimum distance} \(d\) of \(X\) as the minimum size of a set of dependent points on distinct lines of \(X\).
    If \(k>0\), we define \(d\) as the minimum size of a set of dependent points on distinct lines of \(X\) such that there is no hyperplane containing these points and all the lines of \(X\) that do not contain one of these points.
\end{definition}

\begin{theorem}[{\cite[Proposition~1.13]{GGMG}}]\label{thm:quantumlines}
    Let \(d\geq2\). There is a bijection between:

\begin{enumerate}[(i)]
    \item \([\![n,k,d]\!]_2\) stabiliser codes.
    \item Quantum sets of \(n\) lines in \(\PG(n-k-1,2)\) with minimum distance \(d\).
\end{enumerate}
    Moreover, binary stabiliser codes are equivalent if and only if their quantum sets of lines are projectively equivalent.
\end{theorem}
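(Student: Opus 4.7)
My plan is to construct the bijection explicitly, verify the defining properties of a quantum set of lines, and then match equivalence on both sides. Starting from an $[\![n,k,d]\!]_2$ code $\Q$ with stabiliser $\S$, I invoke \rref{Theorem}{thm:ketkar} to pass to the symplectic self-orthogonal additive code $C = \tau_{n,1}(\S) \subseteq \FF_2^{2n}$ of size $2^{n-k}$. Since $-I \notin \S$, the restriction of $\tau_{n,1}$ to $\S$ is injective, so any stabiliser matrix $M$ has full row rank $r = n-k$. I label its columns $c_1,\dots,c_{2n}\in\FF_2^r$ and set $L_i := \operatorname{span}(c_i,c_{n+i})$. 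The hypothesis $d \geq 2$ forbids weight-one elements in $\N(\S)$, which forces $c_i$ and $c_{n+i}$ to be nonzero and distinct, so each $L_i$ is a genuine projective line in $\PG(r-1,2)$; full row rank then forces the $n$ lines to span $\PG(r-1,2)$.

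Next I verify the parity condition by writing a codimension-two subspace as $W = H_{\lambda_1}\cap H_{\lambda_2}$ for independent linear forms $\lambda_1,\lambda_2\in\FF_2^r$. A line $L_i$ is skew to $W$ iff the pairs $(\lambda_1\cdot c_i,\lambda_1\cdot c_{n+i})$ and $(\lambda_2\cdot c_i,\lambda_2\cdot c_{n+i})$ are linearly independent in $\FF_2^2$, which over $\FF_2$ is the same as their symplectic pairing $(\lambda_1\cdot c_i)(\lambda_2\cdot c_{n+i})+(\lambda_1\cdot c_{n+i})(\lambda_2\cdot c_i)$ equalling $1$. Summing over $i$ yields exactly $\langle\lambda_1^{\top}M,\lambda_2^{\top}M\rangle_s$, which vanishes because $C \subseteq C^{\perp_s}$; hence the count is even. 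Reading this computation in reverse shows that any matrix whose columns assemble a quantum set of lines has symplectic self-orthogonal row span, so \rref{Theorem}{thm:ketkar} produces a stabiliser code in the opposite direction.

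For the minimum distance, the key identity is $(a|b) \in C^{\perp_s} \iff \sum_i (a_ic_{n+i}+b_ic_i) = 0$, which provides a bijection between nonzero codewords of $C^{\perp_s}$ of weight $w$ and tuples of $w$ points $p_i \in L_i$ on $w$ distinct lines summing to zero (using that the three nonzero pairs $(a_i,b_i) \in \FF_2^2$ biject with the three nonzero points of $L_i$). When $k=0$, $C = C^{\perp_s}$ and the definition of $d$ in \rref{Definition}{def:qsol} matches directly. For $k>0$, a direct analysis shows $(a|b)\in C$ iff there is a hyperplane $H_\lambda$ containing all the $p_i$ together with the remaining lines $L_j$; the crucial observation is that once $\lambda \cdot p_i = 0$ is imposed, the pair $(\lambda\cdot c_i,\lambda\cdot c_{n+i})$ is forced to be either $0$ (so $L_i \subseteq H_\lambda$) or exactly $(a_i,b_i)$, which after bookkeeping yields the hyperplane condition of \rref{Definition}{def:qsol}.

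For equivalence, I apply \rref{Corollary}{cor:stabequiv} in the case $h=1$: two binary stabiliser codes are equivalent iff their stabiliser matrices differ by row operations, permutations of the $n$ blocks of two columns, and per-block symplectic transformations in $\mathrm{Sp}_2(\FF_2)$. Row operations are invertible changes of basis of $\FF_2^r$, i.e.\ projective transformations of $\PG(r-1,2)$, sending the configuration of lines to a projectively equivalent one; since $\mathrm{Sp}_2(\FF_2)\cong S_3$ acts as all permutations of the three nonzero vectors of $\FF_2^2$, per-block transformations leave each $L_i$ invariant as a projective line; and block permutations permute the $n$ lines. The main technical obstacle I expect is making the $k>0$ minimum-distance correspondence precise, in particular handling the edge case where a non-selected line $L_j$ happens to pass through one of the selected points $p_i$, so that the geometric hyperplane condition aligns exactly with the algebraic condition $(a|b) \notin C$.
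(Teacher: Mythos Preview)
The paper does not give its own proof of this theorem; it is quoted from \cite{GGMG}. Your argument is correct and is, in effect, the $h=1$ instance of what the paper does prove later in \rref{Theorem}{thm:maingeom}: the parity computation you give (the number of lines skew to $H_{\lambda_1}\cap H_{\lambda_2}$ is $\langle\lambda_1^\top M,\lambda_2^\top M\rangle_s\bmod 2$) is exactly the mechanism the paper invokes when it refers to ``the proof of \rref{Theorem}{thm:quantumlines}'' inside \rref{Theorem}{thm:evenproperty}, and your minimum-distance correspondence is the $h=1$ specialisation of the argument in the proof of \rref{Theorem}{thm:maingeom}. For the equivalence clause, your appeal to \rref{Corollary}{cor:stabequiv} with $h=1$ is clean and non-circular; since $\mathrm{Sp}_2(\FF_2)=\mathrm{GL}_2(\FF_2)$, the per-block symplectic moves are arbitrary reparametrisations of each $L_i$, and the three operations collapse to projective equivalence of the unordered line set, as you say.

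On the $k>0$ subtlety you flag: it is resolved by minimality, and the paper's own proof of the generalisation is equally terse about this point. Suppose a hyperplane $H_\lambda$ contains the $p_i$ ($i\in W$) and all $L_j$ with $j\notin W$, and set $(a'|b'):=\lambda^\top M\in C$. Your ``crucial observation'' gives $(a'_i,b'_i)\in\{(0,0),(a_i,b_i)\}$ for $i\in W$, while $(a'_j,b'_j)=(0,0)=(a_j,b_j)$ for $j\notin W$; hence $(a|b)-(a'|b')\in C^{\perp_s}$ has support strictly contained in $W$ (equality would force $\lambda^\top M=0$ and thus $\lambda=0$ by full row rank). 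This difference is then either in $C$, whence $(a|b)\in C$, or outside $C$ with strictly smaller symplectic weight, contradicting minimality of $(a|b)$. The ``non-selected line through a selected point'' issue you anticipate only arises under a stricter reading of \rref{Definition}{def:qsol} than the paper actually uses: compare the proof of \rref{Theorem}{thm:maingeom}, which works with the spaces $\pi_i$ for $i\notin W$ without any intersection proviso.
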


The last sentence of the above theorem ensures that these lines really are the same objects as binary stabiliser codes.

\begin{example}
    Consider the matrix
    \[\left(\begin{array}{@{}c|c@{}}I_8&A\end{array}\right)=\left(\begin{array}{@{}cccccccc|cccccccc@{}}
    1&0&0&0&0&0&0&0& 0&1&0&0&0&0&0&1\\
    0&1&0&0&0&0&0&0& 1&0&1&0&0&0&0&0\\
    0&0&1&0&0&0&0&0& 0&1&0&1&0&0&0&0\\
    0&0&0&1&0&0&0&0& 0&0&1&0&1&0&0&0\\
    0&0&0&0&1&0&0&0& 0&0&0&1&0&1&0&0\\
    0&0&0&0&0&1&0&0& 0&0&0&0&1&0&1&0\\
    0&0&0&0&0&0&1&0& 0&0&0&0&0&1&0&1\\
    0&0&0&0&0&0&0&1& 1&0&0&0&0&0&1&0
    \end{array}\right)\]
    where \(A\) is the adjacency matrix of the \(8\)-cycle. It is the stabiliser matrix of an \([\![8,0,3]\!]_2\) code (more precisely, it is the \emph{graph state} of the \(8\)-cycle). The corresponding quantum set of lines looks like this:
    \begin{figure}[H]
        \centering
        \begin{tikzpicture}[scale=.9, remember picture]
    \path[every node/.append style={circle, fill=black, minimum size=5pt, label distance=-1pt, inner sep=0pt}]
    (157.5:2) node[label={157.5:\(e_1\)}] (1) {}
    (112.5:2) node[label={112.5:\(e_2\)}] (2) {}
    (67.5:2) node[label={67.5:\(e_3\)}] (3) {}
    (22.5:2) node[label={22.5:\(e_4\)}] (4) {}
    (337.5:2) node[label={337.5:\(e_5\)}] (5) {}
    (292.5:2) node[label={292.5:\(e_6\)}] (6) {}
    (247.5:2) node[label={247.5:\(e_7\)}] (7) {}
    (202.5:2) node[label={202.5:\(e_8\)}] (8) {};
    \draw[dotted] (1) -- coordinate (13) (3) -- coordinate (35) (5) -- coordinate (57) (7) -- coordinate (17) (1);
    \draw[dotted] (2) -- coordinate (24) (4) -- coordinate (46) (6) -- coordinate (68) (8) -- coordinate (28) (2);
    \path[every node/.append style={circle, fill=black, minimum size=5pt, label distance=-1pt, inner sep=0pt}]
    (28) node {} (13) node {} (24) node {} (35) node {} (46) node {} (57) node {} (68) node {} (17) node {};
    \draw[thick] (1) -- (28) (2) -- (13) (3) -- (24) (4) -- (35) (5) -- (46) (6) -- (57) (7) -- (68) (8) -- (17);
\end{tikzpicture}
    \end{figure}
    Its minimum distance is \(d=3\), since every two lines are in general position (\(d>2\)) and the three points \(e_1\), \(e_3\) and \(e_1+e_3\) lie on different lines and are linearly dependent (\(d\leq3\)).
\end{example}

\subsection{Stabiliser codes over fields of even order as quantum sets of sets of lines}

We introduce the following new concept, extending \rref{Definition}{def:qsol} to arbitrary \(q=2^h\).

\begin{definition}
    A \emph{quantum set of \(n\) sets of \(h\) lines} in \(\PG(r-1,2)\) is a partitioning \(\X\) of a quantum set of \(hn\) lines in \(\PG(r-1,2)\) into \(n\) subsets of \(h\) lines that span a projective \((2h-1)\)-space. Denote these projective \((2h-1)\)-spaces by \(\pi_1,\pi_2,\dots,\pi_n\).

    \begin{figure}[H]
    \centering
    \begin{tikzpicture}[scale=1.8]
\draw[rotate=120,ultra thick] (0,0)+(.8,1.4) ellipse (.5 and .7) node (A) {};
\draw[rotate=120,ultra thick] (0,0) ellipse (.5 and .7) {};
\draw[rotate=120,ultra thick] (0,0)+(-.8,-1.4) ellipse (.5 and .7) node (B) {};
\draw[thick] (-.4,-.3) -- (-.25,.3) (.1,-.35) -- (0,.35) (.3,-.2) -- (.35,.35);
\draw[thick] (A)+(-.4,-.3) -- +(-.25,.3) (A)+(.1,-.35) -- +(0,.35) (A)+(.3,-.2) -- +(.35,.35);
\draw[thick] (B)+(-.4,-.3) -- +(-.25,.3) (B)+(.1,-.35) -- +(0,.35) (B)+(.3,-.2) -- +(.35,.35);
\draw (A)+(.05,.7) node {\small\(\pi_1\)};
\draw (.05,.7) node {\small\(\pi_2\)};
\draw (B)+(.05,.7) node {\small\(\pi_3\)};
\end{tikzpicture}
    \end{figure}
    
    If \(k=0\), we define the \emph{minimum distance} \(d\) of \(\X\) as the minimum size of a set of dependent points in distinct \(\pi_i\)'s.
    If \(k>0\), we define \(d\) as the minimum size of a set of dependent points in distinct \(\pi_i\)'s such that there is no hyperplane containing these points and all \(\pi_i\)'s that do not contain one of these points.
\end{definition}

Recall that the stabiliser matrix of an \([\![n,k,d]\!]_{2^h}\) stabiliser code is an \(h(n-k)\times2n\) matrix \(G\) with entries in \(\mathbb{F}_{2^h}\). If we replace every column of this matrix by \(h\) columns according to a trace-orthogonal basis of \(\mathbb{F}_{2^h}\cong\mathbb{F}_2^h\) over \(\mathbb{F}_2\), we obtain an \(h(n-k)\times2hn\) matrix \(G'\) with entries in \(\mathbb{F}_2\). Let \(\pi_i\) be the subspace of \(\PG(h(n-k)-1,2)\) that is spanned by the \(h\) columns from the original \(i\)-th column and the \(h\) columns from the original \((n+i)\)-th column.

\begin{lemma}[{\cite[Lemma~5.8]{qeccandtheirgeom}}]\label{lemma:generalposition}
    The subspace \(\pi_i\) is a \((2h-1)\)-dimensional subspace for all \(i=1,\dots,n\) if and only if the minimum nonzero weight of \(\N(\S)\) is at least two.
\end{lemma}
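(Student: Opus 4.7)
The plan is to translate the dimension condition on $\pi_i$ into a statement about the trace-symplectic dual code $C^{\perp_s}$, where $C=\tau(\S)$, exploiting the defining property $\tr(e_\ell e_m)=\delta_{\ell m}$ of the trace-orthogonal basis. If $a\in\FF_{2^h}$ is written $a=\sum_\ell a^{(\ell)}e_\ell$ then $a^{(\ell)}=\tr(a\,e_\ell)$, so the $j$-th entry of the $\ell$-th ``expanded'' column coming from the $i$-th column of the $X$-part of $G$ equals $\tr(a_{ji}e_\ell)$, and likewise $\tr(b_{ji}e_\ell)$ for the $Z$-side. Since $\pi_i$ is spanned by exactly $2h$ columns, the condition $\dim\pi_i=2h-1$ is equivalent to those $2h$ columns being $\FF_2$-linearly independent.

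Next, I would rewrite an $\FF_2$-dependence $\sum_{\ell=1}^h w_\ell\,\mathrm{col}^X_\ell+\sum_{\ell=1}^h w'_\ell\,\mathrm{col}^Z_\ell=0$ as the system $\tr(a_{ji}w+b_{ji}w')=0$ for every row index $j$, where $w:=\sum_\ell w_\ell e_\ell$ and $w':=\sum_\ell w'_\ell e_\ell$ in $\FF_{2^h}$. By the formula $\langle(a|b),(a'|b')\rangle_s=\sum_{i'}\tr(a_{i'}b'_{i'}+a'_{i'}b_{i'})$, this is precisely the trace-symplectic orthogonality to every row of $G$ of the vector $v\in\FF_{2^h}^{2n}$ that has $w'$ in position $i$ of its left half and $w$ in position $i$ of its right half (and zero elsewhere). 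Hence a nontrivial dependence among the $2h$ columns exists iff such a vector $v\neq 0$ lies in $C^{\perp_s}$.

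To conclude, since $C^{\perp_s}=\tau(\N(\S))$ by \rref{Lemma}{lemma:commute} (an operator centralises $\S$ iff its $\tau$-image is symplectically orthogonal to every $\tau(S)$), and the vector $v$ above has symplectic weight exactly one by construction, its existence is equivalent to $\N(\S)$ containing a weight-one Pauli operator whose nontrivial factor sits at position $i$. Running $i$ over $\{1,\dots,n\}$ yields the claimed equivalence: every $\pi_i$ is $(2h-1)$-dimensional iff $\N(\S)$ has no weight-one element, iff its minimum nonzero weight is at least two. The argument is mostly bookkeeping; the only subtle point is keeping track of which side of the symplectic form $w$ and $w'$ end up on, but this is dictated by the definition of $\langle\cdot,\cdot\rangle_s$ and produces the weight-one witness directly.
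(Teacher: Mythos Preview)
Your proof is correct. The paper itself does not supply a proof of this lemma---it is quoted from \cite[Lemma~5.8]{qeccandtheirgeom}---so there is nothing in the present paper to compare against. The route you take is the natural one: using $\tr(e_\ell e_m)=\delta_{\ell m}$ to rewrite the $j$-th entry of the $\ell$-th expanded column as $\tr(a_{ji}e_\ell)$ (resp.\ $\tr(b_{ji}e_\ell)$), an $\FF_2$-dependence among the $2h$ columns becomes exactly $\tr(a_{ji}w+b_{ji}w')=0$ for all rows $j$, which is precisely $\langle(a_j|b_j),v\rangle_s=0$ for the weight-one vector $v$ with $(w',w)$ in the $i$-th slot. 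Since the normaliser equals the centraliser and hence $\tau(\N(\S))=C^{\perp_s}$ by \rref{Lemma}{lemma:commute}, you land on the claimed equivalence. The bookkeeping with which half of the symplectic pair receives $w$ versus $w'$ is handled correctly.
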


We can now state \rref{Theorem}{thm:main} geometrically, thereby generalising \rref{Theorem}{thm:quantumlines}.

\begin{theorem}\label{thm:maingeom}
    Let \(d\geq2\). There is a bijection between:
    \begin{enumerate}[(i)]
        \item \([\![n,k,d]\!]_{2^h}\) stabiliser codes.
        \item Quantum sets of \(n\) sets of \(h\) lines \(\X\) in PG\((h(n-k)-1,2)\) with minimum distance \(d\).
    \end{enumerate}
\end{theorem}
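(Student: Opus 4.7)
The plan is to chain the bijections of \rref{Theorem}{thm:main} and \rref{Theorem}{thm:quantumlines}. Starting from an \([\![n,k,d]\!]_{2^h}\) code with stabiliser \(\S\) and stabiliser matrix \(G\), \rref{Theorem}{thm:main} together with \rref{Corollary}{cor:commutativediagram} yields an \([\![hn,hk,d']\!]_2\) code whose stabiliser matrix \(G'\) is obtained by applying \(\phi_{n,h}\) row by row, i.e.\ by expanding each entry of \(G\) with respect to a fixed trace-orthogonal basis of \(\FF_{2^h}\) over \(\FF_2\). \rref{Theorem}{thm:quantumlines} then turns \(G'\) into a quantum set of \(hn\) lines in \(\PG(h(n-k)-1,2)\), where the \(j\)-th line is spanned by the \(j\)-th column of the \(X\)-half and the \(j\)-th column of the \(Z\)-half of \(G'\). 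The expansion built into \(\phi_{n,h}\) groups these \(hn\) columns (and hence the \(hn\) lines) into \(n\) natural blocks of size \(h\), one block per original quqit, giving subspaces \(\pi_1,\dots,\pi_n\). Since \(d\geq 2\) forces the minimum nonzero weight of \(\N(\S)\) to be at least \(2\), \rref{Lemma}{lemma:generalposition} ensures each \(\pi_i\) has projective dimension exactly \(2h-1\), as required by the definition of a quantum set of sets of lines. The inverse map is immediate: take the lines of \(\X\) as pairs of columns of a binary stabiliser matrix and apply \(\phi_{n,h}^{-1}\) to merge blocks of \(h\) columns back into \(\FF_{2^h}\)-entries.

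The real content of the statement is that the minimum distance \(d\) of the quantum set of sets of lines agrees with the \(\FF_{2^h}\)-distance of the code, and not with the possibly larger binary distance \(d'\). By \rref{Corollary}{cor:commutativediagram}, an element \((a|b)\in C^{\perp_s}\) with \(C=\tau_{n,h}(\S)\) is sent to \(\phi_{n,h}(a|b)\) in the binary symplectic dual, which, exactly as in the proof of \rref{Theorem}{thm:quantumlines}, corresponds to a linear dependency \(\sum_{j}(a'_j\beta_j+b'_j\alpha_j)=0\) among chosen points on the binary lines. The number of distinct \(\pi_i\) touched by this dependency equals the number of \(i\) with \((a_i,b_i)\neq(0,0)\), i.e.\ the symplectic weight over \(\FF_{2^h}\), which is the weight of the corresponding Pauli operator in \(\N(\S)\). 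For \(k>0\) one further has to match the condition ``\((a|b)\notin C\)'' with the geometric condition ``no hyperplane contains these dependent points together with all \(\pi_i\) that do not contain one of them''; a direct check in characteristic two gives \((a|b)=\mu G'\) iff the hyperplane \(\mu^{\perp}\) contains each chosen point on a contributing line (because \(\mu\cdot(a'_j\beta_j+b'_j\alpha_j)=a'_jb'_j+b'_ja'_j=0\)) and each line spanning a non-contributing block, hence each non-contributing \(\pi_i\) itself.

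I expect the hyperplane characterisation of \(C\) inside \(C^{\perp_s}\) in the \(k>0\) case to be the only genuinely non-formal step; the rest is just reading the binary theory of \rref{Theorem}{thm:quantumlines} through the block partition induced by \(\phi_{n,h}\), promoted to \((2h-1)\)-spaces via \rref{Lemma}{lemma:generalposition}. The novelty of the proof is the identification of these \((2h-1)\)-dimensional subspaces, spanned by trace-orthogonal blocks of columns, as the correct higher-rank analogue of the lines in the binary theory of quantum sets of lines.
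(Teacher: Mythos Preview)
Your proposal is correct and follows essentially the same route as the paper's own proof: both chain \rref{Theorem}{thm:main} with \rref{Theorem}{thm:quantumlines}, invoke \rref{Lemma}{lemma:generalposition} via \(d\geq2\) to ensure each block spans a \((2h-1)\)-space, and then verify that the geometric minimum distance matches the \(\FF_{2^h}\)-symplectic weight by expanding in the trace-orthogonal basis, treating the \(k>0\) case with the same hyperplane characterisation of \(C\) inside \(C^{\perp_s}\). The only cosmetic slip is that in your line ``\((a|b)=\mu G'\)'' you should be comparing \(\phi_{n,h}(a|b)\) (or equivalently working with the \(\FF_2\)-row span of \(G\)), but the computation you sketch is exactly the one the paper carries out.
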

\begin{proof}
    Let \(G\) be the stabiliser matrix of an \([\![n,k,d]\!]_{2^h}\) code. If we apply \(\phi_{n,h}\) on each of its rows, we obtain the stabiliser matrix \(G'\) of an \([\![hn,hk,d']\!]_2\) code by \rref{Theorem}{thm:main}.
    By \rref{Theorem}{thm:quantumlines}, the rows of \(G'\) correspond to a quantum set of lines \(X\) in \(\PG(h(n-k)-1)\) with \(|X|=hn\). The lines of \(X\) are spanned by pairs of points whose coordinates are the \(i\)-th and \((hn+i)\)-th column of \(G'\).
    If we partition the quantum set of lines \(X\) into subsets \(X_i\) of the \(h\) lines that result from the \(i\)-th and \((n+i)\)-th column in \(G\), we get a quantum set of \(n\) sets of \(h\) lines. Indeed, by \rref{Lemma}{lemma:generalposition}, and since \(d\geq2\), the lines of every such \(h\)-set span a \((2h-1)\)-space.
    
    Conversely, every quantum set of \(n\) sets of \(h\) lines \(\X\) in PG\((h(n-k)-1,2)\) with \(|\X|=n\), can be mapped to the stabiliser matrix of an \([\![hn,hk,d']\!]_2\) code, after which one can apply \(\phi_{n,h}^{-1}\) (i.e.\ group every \(h\) columns together) to obtain a stabiliser matrix of an \([\![n,k,d]\!]_{2^h}\) code.

    We now prove that the minimum distances are the same. (For this part, we follow the proof of \cite[Theorem~3.8]{qeccandtheirgeom} with some adjustments.) Let \(C\) be the code with generator matrix \(G\). By \rref{Theorem}{thm:ketkar}, \(\mathrm{swt}(C^{\perp_s})=d\) if \(k=0\) and \(\mathrm{swt}(C^{\perp_s}\setminus C)=d\) if \(k>0\).

    If \(k=0\), then \(d=\mathrm{swt}(C^{\perp_s})\). Choose \((a|b)\in C^{\perp_s}\) with \(\mathrm{swt}(a|b)=d\) and write its entries as \(a_i=\sum_{j=1}^ha_{ij}e_{j}\) and \(b_i=\sum_{j=1}^hb_{ij}e_{j}\). Let \(W\) be the set of indices that contribute to the symplectic weight, i.e.\
    \[W=\left\{i\in\{1,\dots,n\}\mid(a_i,b_i)\neq(0,0)\right\}.\]
    Let \(v_i\) denote the \(i\)-th column of \(G\), and write it as a span of the corresponding \(h\) columns in \(G'\), \(v_i=\sum_{j=1}^hv_{ij}e_{j}\).
    Since \((a|b)\) is in \(C^{\perp_s}\), we have
    \begin{align*}
        \Vec{0}&=\sum_{i\in W}\tr(a_iv_{n+i}+v_ib_i)\\
        &=\sum_{i\in W}\sum_{j,l=1}^h(a_{ij}v_{n+i,l}+v_{il}b_{ij})\mathrm{tr}(e_je_l)\\
        &=\sum_{i\in W}\sum_{j=1}^h(a_{ij}v_{n+i,j}+v_{ij}b_{ij}).
    \end{align*}
    Each summand \(\sum_{j=1}^h(a_{ij}v_{n+i,j}+v_{ij}b_{ij})\) is a point in the space \(\pi_i\) spanned by the \(i\)-th set of \(h\) lines of \(\X\). Thus, there are \(|W|=\mathrm{swt}(a|b)=d\) points on distinct \((2h-1)\)-spaces spanned by an \(h\)-set that are dependent. Conversely, every point in \(\pi_i\) can be written as \(\sum_{j=1}^h(a_{ij}v_{n+i,j}+v_{ij}b_{ij})\) for some \(a_i\) and \(b_i\). Hence, the existence of a set of \(d\) dependent points in distinct \(\pi_i\)'s also implies the existence of a vector \((a|b)\in C^{\perp_s}\) with \(\mathrm{swt}(a|b)=d\).
    
    If \(k>0\), then \(d=\mathrm{swt}(C^{\perp_s}\setminus C)\). Choose \((a|b)\in C^{\perp_s}\setminus C\) with \(\mathrm{swt}(a|b)=d\). Since \((a|b)\in C^{\perp_s}\), the above reasoning implies that there are \(d\) points on distinct \((2h-1)\)-spaces spanned by an \(h\)-set that are dependent. However, since \((a|b)\notin C\), we should disregard the case where \((a|b)=x\cdot G\) for some \(x\in\mathbb{F}_2^{h(n-k)}\). Consider the hyperplane \(\pi:x\cdot X=0\) in \(\PG(h(n-k)-1,2)\). If \(i\in W\), then \(x\cdot\sum_{j=1}^h(a_{ij}v_{n+i,j}+v_{ij}b_{ij})=\sum_{j=1}^h(a_{ij}b_{ij}+a_{ij}b_{ij})=0\), so the dependent points are contained in \(\pi\). If \(i\notin W\), then \(a_i=x\cdot v_i=0\) and \(b_i=x\cdot v_{n+i}=0\) are both zero, which is the case if and only if the \(i\)-th \((2h-1)\)-space is contained in \(\pi\).
\end{proof}

\begin{example}
    Here is an example of a pairing of lines to make a \([\![4,0,3]\!]_4\) code.
    \begin{figure}[H]
        \centering
        \begin{tikzpicture}[scale=.9, remember picture]
    \draw[black!40,rounded corners=5pt]
    (157.5:2.5)+(0,-.6) rectangle (22.5:2.5)
    (202.5:2.5)+(0,.6) rectangle (337.5:2.5)
    (112.5:2.5)+(.6,0) rectangle (247.5:2.5)
    (67.5:2.5)+(-.6,0) rectangle (292.5:2.5);
    \path[every node/.append style={circle, fill=black, minimum size=5pt, label distance=-1pt, inner sep=0pt}]
    (157.5:2) node[label={157.5:\(e_1\)}] (1) {}
    (112.5:2) node[label={112.5:\(e_2\)}] (2) {}
    (67.5:2) node[label={67.5:\(e_3\)}] (3) {}
    (22.5:2) node[label={22.5:\(e_4\)}] (4) {}
    (337.5:2) node[label={337.5:\(e_5\)}] (5) {}
    (292.5:2) node[label={292.5:\(e_6\)}] (6) {}
    (247.5:2) node[label={247.5:\(e_7\)}] (7) {}
    (202.5:2) node[label={202.5:\(e_8\)}] (8) {};
    \draw[dotted] (1) -- coordinate (13) (3) -- coordinate (35) (5) -- coordinate (57) (7) -- coordinate (17) (1);
    \draw[dotted] (2) -- coordinate (24) (4) -- coordinate (46) (6) -- coordinate (68) (8) -- coordinate (28) (2);
    \path[every node/.append style={circle, fill=black, minimum size=5pt, label distance=-1pt, inner sep=0pt}]
    (28) node {} (13) node {} (24) node {} (35) node {} (46) node {} (57) node {} (68) node {} (17) node {};
    \draw[thick] (1) -- (28) (2) -- (13) (3) -- (24) (4) -- (35) (5) -- (46) (6) -- (57) (7) -- (68) (8) -- (17);
\end{tikzpicture}
    \end{figure}
    The minimum distance is \(d=3\), since every two solids (projective \(3\)-spaces) span the entire space (\(d>2\)) and the three points \(e_1\), \(e_3\) and \(e_1+e_3\) lie in different solids and are linearly dependent (\(d\leq3\)). We will prove in \rref{Section}{sec:applications} that this code is unique.
\end{example}

This provides a geometrical interpretation of the map \(\Psi_{n,h}\) defined earlier. It sends a quantum set of sets of lines to its underlying quantum set of lines. The inverse map \(\Psi_{n,h}^{-1}\) groups a quantum set of lines into sets of \(h\) lines, but only if the lines in one group are in general position.




\subsection{Equivalent quantum sets of sets of lines}

Although stabiliser codes over finite fields of characteristic two can be seen as quantum sets of sets of lines, the equivalence of codes is not necessarily translated into projective equivalence. We can do more. Conjugation with a Clifford operator now corresponds to a symplectic transformation of each of the \((2h-1)\)-spaces. To understand this better, we introduce the notion of a symplectic polar space.

\begin{definition}
    Let \(V\) be a vector space. A \emph{bilinear form} on \(V\) is a map \(f:V^2\to V\) that is linear in both arguments. A \emph{totally isotropic subspace} of \(f\) is a subspace \(W\leqslant V\) such that \(f(v,w)=0\) for any \(v,w\in W\).
    
    An \emph{symplectic form} (or \emph{alternating form}) on \(V\) is a bilinear form \(f:V^2\to V\) such that \(f(v,v)=0\) for all \(v\in V\). It is \emph{degenerate} if there exists a \(v\in V\setminus\{0\}\) such that \(f(v,\cdot)\equiv0\). A \emph{symplectic transformation} is a change of basis of \(V\) that preserves \(f\).
\end{definition}

If \(V=\FF_2^n\), one can show that \(f\) is symplectic if and only if \(f(v,w)=v^TAw\) for some symmetric \((n\times n)\)-matrix \(A\) with zeroes on the diagonal. The matrix \(A\) is singular if and only if \(f\) is degenerate. 

\begin{definition}
    A \emph{symplectic polar space of rank \(h\)}, denoted by \(W(2h-1,2)\), is the set of totally isotropic subspaces of a nondegenerate symplectic form \(f\) on \(\FF_2^{2h}\).
    We associate with it a \emph{symplectic polarity} on \(\PG(2h-1,2)\), defined by
    \[\pi\mapsto\pi^\perp:=\{v\in\FF_2^{2h}\mid f(v,w)=0\text{ for all }w\in\pi\}.\]

\end{definition}



\begin{lemma}\label{lemma:lines}
    Let \(\ell_1,\dots,\ell_h\) be \(h\) lines that span a \((2h-1)\)-space. There is a unique symplectic polarity on this space such that \(\ell_i^\perp=\langle\ell_j\mid i\neq j\rangle\) for all \(i=1,\dots,h\).
\end{lemma}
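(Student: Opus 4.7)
The plan is to exploit the fact that the $h$ lines, each corresponding to a $2$-dimensional subspace of the ambient $2h$-dimensional $\FF_2$-vector space $V$, span $V$ and therefore, by dimension count ($h \cdot 2 = 2h$), form a direct sum decomposition $V = \ell_1 \oplus \cdots \oplus \ell_h$. I will use this decomposition to first establish uniqueness of a symplectic form satisfying the stated conditions, and then verify existence by constructing one explicitly.

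For uniqueness, suppose $f$ is a nondegenerate alternating form on $V$ with $\ell_i^\perp = \langle \ell_j \mid j\neq i\rangle$ for each $i$. The containment $\langle \ell_j \mid j \neq i\rangle \subseteq \ell_i^\perp$ forces $f(x,y)=0$ whenever $x \in \ell_i$ and $y \in \ell_j$ for $j \neq i$, so $f$ is block-diagonal with respect to the decomposition $V = \bigoplus \ell_i$. On the other hand, equality $\ell_i^\perp = \langle \ell_j \mid j\neq i\rangle$ combined with $\dim \langle \ell_j\mid j\neq i\rangle = 2h-2$ forces $\ell_i \not\subseteq \ell_i^\perp$, i.e.\ $f|_{\ell_i}$ is nonzero. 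But a nonzero alternating bilinear form on a $2$-dimensional space over $\FF_2$ is unique (with respect to a chosen basis $\{v_i,w_i\}$ of $\ell_i$ it sends $(v_i,w_i) \mapsto 1$; invariantly, it is the determinant form). Thus $f|_{\ell_i}$ is determined, and since $f$ vanishes on pairs from distinct $\ell_i,\ell_j$, bilinearity pins down $f$ on all of $V$.

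For existence, I define $f$ via the decomposition: choose bases $\{v_i,w_i\}$ of each $\ell_i$ and set
\[
f(v_i,w_i) = 1, \quad f(v_i,v_i) = f(w_i,w_i) = 0, \quad f(x,y)=0 \text{ for } x\in\ell_i,\ y\in\ell_j,\ i\neq j,
\]
extended bilinearly. This is alternating because $f(v,v)=0$ on each basis vector and over $\FF_2$ alternating is equivalent to symmetric with zero diagonal, which passes to linear combinations. Nondegeneracy follows block-wise: if $v=\sum v_i$ with $v_i \in \ell_i$ satisfies $f(v,\cdot)\equiv 0$, then testing against $\ell_i$ shows $f(v_i,\cdot)|_{\ell_i}\equiv 0$, forcing $v_i=0$ by nondegeneracy of the $2$-dimensional block. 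Finally, by construction $\langle \ell_j \mid j\neq i\rangle \subseteq \ell_i^\perp$, and the two spaces have the same dimension $2h-2$ (the latter by nondegeneracy of $f$), so they coincide.

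The only mild subtlety is verifying that the construction in the existence part does not depend on the choice of bases $\{v_i,w_i\}$ — but this follows immediately from the uniqueness paragraph, since any two choices of such $f$ satisfy the hypotheses of the lemma and therefore must agree. No step appears to be a genuine obstacle; the heart of the argument is simply the elementary observation that a nondegenerate alternating form on a $2$-dimensional $\FF_2$-space is unique, combined with the direct sum decomposition forced by dimension.
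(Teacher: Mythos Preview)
Your proof is correct and follows essentially the same approach as the paper: both choose a basis adapted to the direct sum decomposition $V=\ell_1\oplus\cdots\oplus\ell_h$ and observe that the standard symplectic form with respect to this basis does the job. You supply considerably more detail than the paper does---in particular, the block-diagonal reduction and the observation that a nonzero alternating form on a $2$-dimensional $\FF_2$-space is unique---whereas the paper simply writes down the form and asserts uniqueness without further justification.
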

\begin{proof}
    Choose a basis \(\{e_1,\dots,e_{2h}\}\) of \(\FF_2^{2h}\) such that \(\ell_i=\langle e_i,e_{h+i}\rangle\) for all \(i\). The symplectic inner product \[f(v,w)=\langle v,w\rangle_s=v_1w_{h+1}+v_{h+1}w_1+\cdots+v_hw_{2h}+v_{2h}w_h\] is the unique nondegenerate symplectic form for which the associated symplectic polarity fulfils the condition in the statement.
\end{proof}

With regard to the previous lemma, we use the notation \(f_{\{\ell_1,\dots,\ell_h\}}\) for the unique symplectic form determined by the lines \(\ell_1,\dots,\ell_h\).





\begin{theorem}\label{thm:qsohlequiv}
    Stabiliser codes over fields of even order are equivalent if and only if their quantum sets of sets of lines are the same up to a projective equivalence and, for each set of lines \(\{\ell_1,\dots,\ell_h\}\), a symplectic transformation on \(\langle\ell_1,\dots,\ell_h\rangle\) with respect to \(f_{\{\ell_1,\dots,\ell_h\}}\).
\end{theorem}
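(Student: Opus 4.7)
The plan is to combine \rref{Corollary}{cor:stabequiv} with the geometric dictionary from \rref{Theorem}{thm:maingeom}. By \rref{Corollary}{cor:stabequiv}, equivalence of two \([\![n,k,d]\!]_{2^h}\) codes is captured on the binary stabiliser matrix by three operations: (i) left multiplication by an invertible matrix \(L\), (ii) a permutation of the \(n\) blocks of \(2h\) columns, and (iii) right multiplication on a single block by a \(2h\times 2h\) symplectic matrix \(M\). I will show that (i) and (ii) together correspond to a projective equivalence of quantum sets of sets of lines, while (iii) corresponds to a symplectic transformation of the associated \(\pi_i\) with respect to \(f_{\{\ell_1,\dots,\ell_h\}}\). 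The theorem then follows by stringing these identifications together.

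For operations (i) and (ii), recall from the proof of \rref{Theorem}{thm:maingeom} that the columns of the binary stabiliser matrix are the points of \(\PG(h(n-k)-1,2)\) used to span the lines of \(\X\), and the \(n\) blocks partition them into sets of size \(2h\). Left multiplication by \(L\) is a change of basis of the ambient vector space, inducing a projective transformation of \(\PG(h(n-k)-1,2)\); it sends each line to a line and each \(\pi_i\) to \(L\pi_i\), so the \(h\)-set structure is preserved. A block permutation relabels the \(\pi_i\)'s. Together, these realise an arbitrary projective equivalence of quantum sets of sets of lines.

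For operation (iii), fix the \(i\)-th block \(B_i=(a_1\mid\cdots\mid a_h\mid b_1\mid\cdots\mid b_h)\). Since \(d\geq 2\), \rref{Lemma}{lemma:generalposition} guarantees that these \(2h\) columns are linearly independent, hence form a basis for the \(2h\)-dimensional vector space underlying \(\pi_i\). Right multiplication \(B_i\mapsto B_iM\) replaces the columns by new linear combinations whose coordinates, in the basis \(\{a_1,\dots,a_h,b_1,\dots,b_h\}\), are exactly the columns of \(M\). By \rref{Lemma}{lemma:lines}, the form \(f_{\{\ell_1,\dots,\ell_h\}}\) on \(\pi_i\) is, in this basis, the standard symplectic form on \(\FF_2^{2h}\). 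Therefore \(M\) being symplectic is equivalent to the induced linear map on \(\pi_i\) being a symplectic transformation with respect to \(f_{\{\ell_1,\dots,\ell_h\}}\); the new block encodes a set of \(h\) lines \(\{M\ell_1,\dots,M\ell_h\}\) in the same \(\pi_i\), which remains a hyperbolic pairing for the same form.

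Combining the three identifications with \rref{Corollary}{cor:stabequiv} yields both directions of the theorem. The only mildly technical point is the step for (iii): one must recognise that the matrix operation ``right multiplication by a symplectic matrix on the columns of a block'' coincides with the geometric operation ``symplectic transformation of \(\pi_i\) with respect to \(f_{\{\ell_1,\dots,\ell_h\}}\).'' This is precisely what \rref{Lemma}{lemma:lines} supplies, by providing the canonical basis of \(\pi_i\) in which \(f_{\{\ell_1,\dots,\ell_h\}}\) has the standard Gram matrix \(\bigl(\begin{smallmatrix}0&I_h\\I_h&0\end{smallmatrix}\bigr)\).
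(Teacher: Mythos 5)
Your proposal is correct and follows essentially the same route as the paper: both reduce the statement to \rref{Corollary}{cor:stabequiv} and then identify row operations with projective transformations, block permutations with relabellings of the \(\pi_i\), and the per-block symplectic column operation with a symplectic transformation for \(f_{\{\ell_1,\dots,\ell_h\}}\) via \rref{Lemma}{lemma:lines}. Your treatment of step (iii) is slightly more explicit than the paper's (spelling out the basis given by the block columns and the Gram matrix), but the argument is the same.
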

\begin{proof}
    Recall that every two consecutive columns in the stabiliser matrix correspond to a line, and every block of \(2h\) columns determines a set of \(h\) lines.
    The statement then follows from \rref{Corollary}{cor:stabequiv}:
    \begin{enumerate}[(i)]
        \item Row operations on the stabiliser matrix are equivalent to projective transformations.
        \item Permutations of the blocks are exactly the permutations on the \((2h-1)\)-spaces.
        \item A symplectic transformation on the \(2h\) columns of one block is a change of basis that preserves the symplectic inner product. But the lines are ordered in such a way that \(\ell_i=\langle e_i,e_{h+i}\rangle\), where \(\{e_1,\dots,e_h\}\) is the standard basis of \(\langle\ell_1,\dots,\ell_h\rangle\). Following the proof of \rref{Lemma}{lemma:lines}, \(f_{\{\ell_1,\dots,\ell_h\}}\) is therefore also the symplectic inner product. So a symplectic transformation in the sense of \rref{Corollary}{cor:stabequiv} is the same as a symplectic transformation with respect to \(f_{\{\ell_1,\dots,\ell_h\}}\).
    \end{enumerate}
\end{proof}

\subsection{Stabiliser codes over fields of even order as quantum sets of symplectic polar spaces}

The following lemma is a special case of Witt's theorem. A \emph{hyperbolic} line is a line that is not totally isotropic.

\begin{lemma}[{\cite[Theorem~3.4]{witt}}]\label{lemma:witt}
    The group of symplectic transformations 
    acts transitively on the totally isotropic (resp.\ hyperbolic) lines.
\end{lemma}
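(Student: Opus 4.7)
The plan is to reduce both transitivity statements to the standard extension lemma for symplectic bases: every partial symplectic basis of the nondegenerate symplectic space $(\FF_2^{2h}, f)$ extends to a full symplectic basis $\{a_1,\dots,a_h,b_1,\dots,b_h\}$ satisfying $f(a_i,b_j) = \delta_{ij}$ and $f(a_i,a_j) = f(b_i,b_j) = 0$. Granting this, the lemma follows at once. Two linearly independent vectors $u,v$ with $f(u,v) = 0$ span a totally isotropic line and form a partial symplectic basis of type $(a_1,a_2)$; two linearly independent vectors with $f(u,v) = 1$ (the only nonzero value available over $\FF_2$) span a hyperbolic line and form a partial symplectic basis of type $(a_1,b_1)$. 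In either case, given two lines of the same type, I extend each to a full symplectic basis; the unique linear map identifying one symplectic basis with the other preserves $f$ by construction and carries one line to the other.

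I would prove the extension lemma in two stages. First, I pair up any already-placed vector $a_i$ that lacks its partner $b_i$: letting $W$ denote the span of the other placed vectors, since $a_i \notin W$ by linear independence, the functional $f(a_i, \cdot)$ does not vanish identically on $W^\perp$ (because $(W^\perp)^\perp = W$ by nondegeneracy), so there exists $b \in W^\perp$ with $f(a_i, b) = 1$, and I take this as $b_i$. This $b$ automatically pairs to $0$ with every other placed vector, so it enlarges the partial basis correctly. After finitely many such steps, every placed vector has its partner. Second, the span $U$ of the placed vectors is now a nondegenerate symplectic subspace, its $f$-orthogonal complement $U^\perp$ is a nondegenerate symplectic subspace of strictly smaller even dimension, and by induction on $h$ it admits a symplectic basis, which together with the placed vectors completes the extension.

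The main point to verify is the existence of the partner $b$ in the first stage: this is the only place where the nondegeneracy of $f$ enters the argument, and it is the key observation that prevents the construction from collapsing. Everything else is automatic in characteristic two since all vectors are already isotropic ($f(v,v) = 0$ for all $v$), so there are no quadratic obstructions as there would be over fields of odd characteristic. With the extension lemma in hand, the basis-transport argument above yields the two transitivity statements with no further work.
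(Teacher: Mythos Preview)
Your proof is correct. The paper does not actually supply a proof of this lemma; it simply cites it as a special case of Witt's theorem from the reference \cite{witt}. Your argument via extension of partial symplectic bases is the standard self-contained route and is carried out correctly: the key nondegeneracy step (finding a partner $b\in W^\perp$ with $f(a_i,b)=1$) is exactly right, and over $\FF_2$ the automatic isotropy $f(v,v)=0$ removes any quadratic obstruction, so the basis-transport argument goes through without further caveats. One small point you left implicit is the linear independence of the newly found partner $b$ from the already placed vectors; this follows because $f(a_i,b)=1$ while $a_i$ is $f$-orthogonal to the span of the placed vectors (since $b_i$ was not yet among them), so $b$ cannot lie in that span. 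With that observation made explicit, the argument is complete and in fact provides more detail than the paper, which defers entirely to the cited source.
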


Note that a recursive application of this lemma allows one to prove that the group of symplectic transformations on \(W(2h-1,2)\) acts transitively on the sets of \(h\) lines \(\{\ell_1,\dots,\ell_h\}\) for which \(\ell_i^\perp=\langle\ell_j\mid i\neq j\rangle\).



\begin{lemma}\label{lemma:hyperbolic}
    A line \(\ell\) is hyperbolic if and only if \(\ell\cap\ell^\perp=\{0\}\).
\end{lemma}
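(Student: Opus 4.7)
The plan is to prove both directions separately, using the fact that $\ell \cap \ell^\perp$ is exactly the radical of the restriction of the symplectic form $f$ to the line $\ell$, and then exploiting the dimension of $\ell$.

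For the (easy) reverse direction, I would argue contrapositively. If $\ell$ is not hyperbolic, it is totally isotropic by definition, which means $f(v,w)=0$ for all $v,w\in\ell$; equivalently $\ell\subseteq\ell^\perp$, so $\ell\cap\ell^\perp = \ell \neq\{0\}$ (as $\ell$ is a $2$-dimensional, hence nonzero, subspace of $\FF_2^{2h}$).

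For the forward direction, suppose $\ell$ is hyperbolic. Then there exist $v,w\in\ell$ with $f(v,w)=:c\neq 0$, and $\{v,w\}$ is a basis of $\ell$ (they cannot be proportional since $f(v,\lambda v)=\lambda f(v,v)=0$ by alternation). Now take any $u=\alpha v+\beta w\in\ell\cap\ell^\perp$. Evaluating $f(u,v)=\beta f(w,v)=\beta c$ and $f(u,w)=\alpha f(v,w)=\alpha c$, both must vanish; since $c\neq 0$, this forces $\alpha=\beta=0$, hence $u=0$. So $\ell\cap\ell^\perp=\{0\}$.

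There is no real obstacle here; the only subtlety worth emphasising is the general fact underlying the computation, namely that any nonzero alternating form on a $2$-dimensional space is automatically nondegenerate (its Gram matrix in any basis has the shape $\left(\begin{smallmatrix}0&c\\c&0\end{smallmatrix}\right)$ with $c\neq 0$, which is invertible). This is what makes the dichotomy between "totally isotropic" and "$\ell\cap\ell^\perp=\{0\}$" exhaustive on a line.
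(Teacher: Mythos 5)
Your proof is correct, but it takes a different route from the paper's. You work inside the line itself: you identify $\ell\cap\ell^\perp$ with the radical of the restriction of $f$ to $\ell$ and observe that a nonzero alternating form on a $2$-dimensional space is automatically nondegenerate (Gram matrix $\left(\begin{smallmatrix}0&c\\c&0\end{smallmatrix}\right)$ with $c\neq0$), so the only possibilities are $\ell\cap\ell^\perp=\ell$ (totally isotropic) or $\ell\cap\ell^\perp=\{0\}$ (hyperbolic). The paper instead argues synthetically with the polarity: it notes that the only case to exclude is $\ell\cap\ell^\perp$ being a single point, and derives a contradiction from the inclusion-reversing property of $\perp$ together with the fact that every point is absolute ($p\in p^\perp$) for a symplectic polarity — if $p\in\ell\setminus\ell^\perp$, then $\ell^\perp\subseteq p^\perp$ and $p\in p^\perp$ force $\ell\subseteq p^\perp$, hence $p\in\ell^\perp$, a contradiction. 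Your version is more explicitly computational and makes the underlying linear-algebra fact (nondegeneracy of the restricted form) visible, which generalises verbatim to any field; the paper's version is coordinate-free and stays entirely in the projective/polarity language used throughout Section 4. Both are complete and equally short.
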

\begin{proof}
    It suffices to prove that \(\ell\cap\ell^\perp\) is never a point. Suppose it were. Choose any \(p\in\ell\setminus\ell^\perp\). Then \(\ell^\perp\subseteq p^\perp\), but also \(p\in p^\perp\), so \(\ell\subseteq p^\perp\) and thus \(p\in\ell^\perp\), a contradiction.
\end{proof}

\begin{theorem}\label{thm:evenproperty}
    Let \(\ell_1,\dots,\ell_h\) be \(h\) lines such that \(\ell_i^\perp=\langle\ell_j\mid i\neq j\rangle\) and let \(\pi\) be a codimension two space. The property ``\(\pi\) is skew to an even number of the lines \(\ell_1,\dots,\ell_h\)'' is preserved under symplectic transformations with respect to \(f_{\{\ell_1,\dots,\ell_h\}}\).
\end{theorem}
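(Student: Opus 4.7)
The plan is to reduce the parity count to an invariant of the symplectic polar dual. Set \(\Sigma = \langle \ell_1, \dots, \ell_h \rangle\) and \(\rho = \pi \cap \Sigma\), so that \(\ell_i \cap \pi = \ell_i \cap \rho\) and only \(\rho\) matters. The subspace \(\rho\) has codimension \(0\), \(1\), or \(2\) in \(\Sigma\). If \(\Sigma \subseteq \pi\), every \(\ell_i\) lies in \(\rho\); if \(\rho\) is a hyperplane of \(\Sigma\), then every line of \(\Sigma\) meets \(\rho\) by the projective dimension formula. In both of these cases no \(\ell_i\) is skew to \(\pi\), and the count \(0\) is trivially invariant. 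The only nontrivial case is \(\rho\) of codimension \(2\) in \(\Sigma\).

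In that case, \(L := \rho^\perp\), taken with respect to \(f_{\{\ell_1, \dots, \ell_h\}}\) on \(\Sigma\), is a line in \(\Sigma\). Pick \(p, q \in \Sigma\) spanning \(L\) and decompose them along \(\Sigma = \bigoplus_j \ell_j\) as \(p = \sum_j p_j\) and \(q = \sum_j q_j\) with \(p_j, q_j \in \ell_j\). The condition \(\ell_i \cap \rho = 0\) is equivalent to \(\ell_i + \rho = \Sigma\), hence by taking perps to \(\ell_i^\perp \cap L = 0\); that in turn says the projection \(L \to \ell_i\) along \(\ell_i^\perp = \bigoplus_{j \neq i} \ell_j\) is an isomorphism, i.e.\ that \(p_i\) and \(q_i\) are linearly independent in \(\ell_i \cong \FF_2^2\). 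Meanwhile, bilinearity gives \(f(p, q) = \sum_{j,k} f(p_j, q_k)\), and every cross-term with \(j \neq k\) vanishes because \(\ell_j \subseteq \ell_k^\perp\); hence \(f(p, q) = \sum_i f(p_i, q_i)\).

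The crucial step is the \(\FF_2\)-specific observation that in a hyperbolic plane over \(\FF_2\) the three nonzero vectors are pairwise linearly independent and pairwise paired to \(1\) under the restricted form. Consequently \(f(p_i, q_i) = 1\) iff \(p_i\) and \(q_i\) are linearly independent. Combining the three steps, the number of \(\ell_i\) skew to \(\rho\) equals \(f(p, q)\) modulo \(2\), which is \(0\) if \(L\) is totally isotropic and \(1\) if \(L\) is hyperbolic. A symplectic transformation of \(\Sigma\) preserves \(f\) and fixes \(\Sigma\) and \(\pi\) (and therefore \(\rho\) and \(L\)), so the scalar \(f(p, q)\) does not depend on which hyperbolic decomposition \(\{\ell_i\}\) of \(\Sigma\) is used; hence the parity is invariant. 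The main obstacle is precisely this \(\FF_2\)-specific identification: over larger fields, linearly independent vectors in a hyperbolic plane need not pair to a nonzero element, so this argument does not extend beyond characteristic \(2\) as stated.
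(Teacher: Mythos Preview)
Your proof is correct and takes a genuinely different route from the paper. The paper's argument is indirect: it invokes the equivalence between the ``even number of skew lines'' condition and symplectic orthogonality of a pair of rows of an associated stabiliser matrix (via \cite[Proposition~1.13]{GGMG}), and then quotes \rref{Corollary}{cor:stabequiv} and \rref{Theorem}{thm:qsohlequiv} to conclude that this is preserved under symplectic transformations. You instead give a self-contained geometric computation: decomposing two generators \(p,q\) of \(L=\rho^\perp\) along the hyperbolic splitting \(\Sigma=\bigoplus_i\ell_i\), you identify the parity of the skew count with \(f(p,q)\in\FF_2\), an invariant that visibly depends only on \(\pi\) and \(f\) and not on the choice of lines. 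This is more elementary and more informative, since it simultaneously yields the statement of \rref{Theorem}{thm:qsosps} (the parity is even precisely when \(\pi^\perp\) is totally isotropic), collapsing two results into one argument. The paper's approach has the compensating virtue of being a one-line deduction from machinery already in place. One phrasing nit: saying the symplectic transformation ``fixes \(\pi\)'' is slightly misleading; what you mean, and what makes the argument work, is that the scalar \(f(p,q)\) is computed from \(\rho\) and \(f\) alone, hence is independent of which hyperbolic decomposition \(\{\ell_i\}\) one uses.
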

\begin{proof}
    From the proof of \rref{Theorem}{thm:quantumlines} (see \cite[Proposition~1.13]{GGMG}), this property is equivalent to saying that a certain pair of rows of the associated stabiliser, is symplectic orthogonal. By \rref{Corollary}{cor:stabequiv}
    , this property is preserved under symplectic transformations. Part (iii) of the proof of \rref{Theorem}{thm:qsohlequiv} tells us that these are exactly the symplectic transformations with respect to \(f_{\{\ell_1,\dots,\ell_h\}}\).
\end{proof}

\begin{theorem}\label{thm:qsosps}
    Let \(\pi\) be a subspace of \(\PG(2h-1,2)\) of codimension 0,1 or 2. Let \(\ell_1,\dots,\ell_h\) be \(h\) lines such that \(\ell_i^\perp=\langle\ell_j\mid i\neq j\rangle\). The following are equivalent.
    \begin{enumerate}[(i)]
        \item \(\pi\) is skew to an even number of the lines \(\ell_1,\dots,\ell_h\).
        \item \(\pi^\perp\) is totally isotropic.
    \end{enumerate}
\end{theorem}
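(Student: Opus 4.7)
The plan is to split by the projective codimension of $\pi$. The codimension $0$ and $1$ cases will be handled as degenerate, with the real content residing in codimension~$2$.

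If $\pi$ is the whole space, it is skew to no line and $\pi^\perp=\{0\}$ is vacuously totally isotropic. If $\pi$ is a hyperplane, a dimension count shows every projective line meets $\pi$, so $\pi$ is skew to $0$ of the $\ell_i$; and $\pi^\perp$ is a single projective point $\langle v\rangle$, which is automatically totally isotropic because $f(v,v)=0$ for any alternating form. Hence (i) and (ii) both hold in these two cases, and I may focus on codimension~$2$.

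For codimension~$2$, set $m:=\pi^\perp$, a projective line. Using nondegeneracy together with the identity $(U+W)^\perp = U^\perp\cap W^\perp$, I obtain $\pi\cap\ell_i = m^\perp\cap\ell_i = (m+\ell_i^\perp)^\perp$, so $\pi$ is skew to $\ell_i$ if and only if $m+\ell_i^\perp$ is the whole space, if and only if $m\cap\ell_i^\perp=\{0\}$ (by a dimension count, since $\ell_i^\perp$ has vector dimension $2h-2$ and $m$ has vector dimension $2$). To extract the parity, I will use the canonical coordinates from the proof of \rref{Lemma}{lemma:lines}: a basis $\{e_1,\dots,e_{2h}\}$ with $\ell_j=\langle e_j,e_{h+j}\rangle$ and $f(x,y)=\sum_{j=1}^{h}(x_jy_{h+j}+x_{h+j}y_j)$. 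Writing $m=\langle u,w\rangle$, the condition $m\cap\ell_i^\perp=\{0\}$ unpacks to the statement that the $2\times 2$ minor with rows $(u_i,u_{h+i})$ and $(w_i,w_{h+i})$ is nonsingular, i.e., $u_iw_{h+i}+u_{h+i}w_i=1$. Summing over $i$, the parity of the number of lines skew to $\pi$ equals $f(u,w)\pmod 2$, which vanishes iff $f(u,w)=0$; since $f(u,u)=f(w,w)=0$ automatically, that is exactly the condition that $m=\pi^\perp$ be totally isotropic. The main thing to get straight will be the bookkeeping between projective and vector dimensions; the underlying perpendicularity identities are otherwise routine.
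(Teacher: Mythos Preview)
Your proof is correct, and it takes a genuinely different route from the paper's. After disposing of codimension $0$ and $1$ in the same way, the paper argues the codimension~$2$ case structurally: it invokes \rref{Theorem}{thm:evenproperty} (which in turn rests on the stabiliser-code machinery of \rref{Corollary}{cor:stabequiv}) to know that property~(i) is invariant under symplectic transformations, and then uses Witt's lemma (\rref{Lemma}{lemma:witt}) to move $\pi^\perp$ to a convenient position---either perpendicular to one of the $\ell_i$, or equal to one of the $\ell_i$---where the parity can be read off directly. Your argument bypasses both of these ingredients: you stay in the fixed coordinate system of \rref{Lemma}{lemma:lines}, reduce ``$\pi$ skew to $\ell_i$'' to the nonvanishing of a $2\times2$ minor, and observe that the mod~$2$ sum of these minors is literally $f(u,w)$. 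This is more elementary and entirely self-contained---in particular it does not depend on \rref{Theorem}{thm:evenproperty}, so one could even reverse the logical order and deduce that theorem as a corollary of yours. The paper's approach, on the other hand, makes the role of the symplectic group more visible and fits the narrative in which \rref{Theorem}{thm:evenproperty} has already been established.
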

\begin{proof}
    If the codimension of \(\pi\) is 0 or 1, both properties are always true, since \(\pi\) is skew to no line, and every point is totally isotropic. So assume that \(\pi^\perp\) is a line.

    If \(\pi^\perp\) is a totally isotropic line, choose two points \(p,p'\in\pi^\perp\subseteq\pi\) and a point \(q\notin p^\perp\). Then \(pq\) is a hyperbolic line and there exists a symplectic transformation \(\sigma\) such that \(\sigma(\ell_1)=pq\) by \rref{Lemma}{lemma:witt}. Moreover, \((pq)^\perp=\sigma(\ell_1)^\perp=\langle\sigma(\ell_2),\dots,\sigma(\ell_h)\rangle\). Now, \((pq)^\perp\cap\pi=(pq)^\perp\cap p'^\perp\) is a hyperplane of \((pq)^\perp\), so all lines \(\sigma(\ell_1),\dots,\sigma(\ell_h)\) intersect \(\pi\). Thus, \(\pi\) is skew to none of the lines \(\sigma(\ell_1),\dots,\sigma(\ell_h)\). By \rref{Theorem}{thm:evenproperty}, \(\pi\) is skew to an even number of the lines \(\ell_1,\dots,\ell_h\).

    If \(\pi^\perp\) is a hyperbolic line, then \(\pi\cap\pi^\perp=\{0\}\) by \rref{Lemma}{lemma:hyperbolic}. There exists a symplectic transformation \(\sigma\) such that \(\sigma(\ell_1)=\pi^\perp\) by \rref{Lemma}{lemma:witt}. Then \(\pi=\langle \sigma(\ell_2),\dots,\sigma(\ell_h)\rangle\) and hence, \(\pi\) is skew to exactly one of the lines \(\sigma(\ell_1),\dots,\sigma(\ell_h)\), namely \(\sigma(\ell_1)\). By \rref{Theorem}{thm:evenproperty}, \(\pi\) is skew to an odd number of the lines \(\ell_1,\dots,\ell_h\).

\end{proof}

\begin{definition}\label{def:qsosps}
    A \emph{quantum set of \(n\) symplectic polar spaces of rank \(h\)} in \(\PG(r-1,2)\) is a set \(X\) of \(n\) projective \((2h-1)\)-spaces spanning \(\PG(r-1,2)\), each equipped with a symplectic polarity with the following property: every codimension two subspace intersects an even number of the elements of \(X\) in a subspace \(\pi\) 
    for which \(\pi^\perp\) is totally isotropic.

    If \(k=0\), we define the \emph{minimum distance} \(d\) of \(X\) as the minimum size of a set of dependent points in distinct \((2h-1)\)-spaces of \(X\).
    If \(k>0\), we define \(d\) as the minimum size of a set of dependent points on distinct \((2h-1)\)-spaces of \(X\) such that there is no hyperplane containing these points and all \((2h-1)\)-spaces of \(X\) that do not contain one of these points.
\end{definition}

\begin{theorem}\label{thm:final}
    Let \(d\geq2\). There is a bijection between:
    \begin{enumerate}[(i)]
        \item \([\![n,k,d]\!]_{2^h}\) stabiliser codes.
        \item Quantum sets of \(n\) symplectic polar spaces of rank \(h\) in PG\((h(n-k)-1,2)\) with minimum distance \(d\).
    \end{enumerate}
    Moreover, stabiliser codes over fields of even order are equivalent if and only if their quantum sets of symplectic polar spaces are projectively equivalent.
\end{theorem}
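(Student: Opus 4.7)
The plan is to combine \rref{Theorem}{thm:maingeom} with \rref{Theorem}{thm:qsosps} and \rref{Lemma}{lemma:lines} to pass from quantum sets of sets of $h$ lines to quantum sets of symplectic polar spaces, and to combine \rref{Theorem}{thm:qsohlequiv} with the transitivity content of \rref{Lemma}{lemma:witt} to promote the equivalence.

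For the forward direction, starting from an $[\![n,k,d]\!]_{2^h}$ code I would apply \rref{Theorem}{thm:maingeom} to obtain a quantum set of $n$ sets of $h$ lines in $\PG(h(n-k)-1,2)$ with minimum distance $d$. In each $(2h-1)$-space $\pi_i$ spanned by the $i$-th set $\{\ell_1,\dots,\ell_h\}$, I would use \rref{Lemma}{lemma:lines} to attach the unique symplectic polarity $f_{\{\ell_1,\dots,\ell_h\}}$ satisfying $\ell_j^\perp=\langle\ell_m\mid m\neq j\rangle$. An arbitrary codimension-two subspace of $\PG(h(n-k)-1,2)$ meets $\pi_i$ in a subspace of codimension at most two inside $\pi_i$, so \rref{Theorem}{thm:qsosps} applies to each $\pi_i$ and converts the ``skew to an even number of lines'' condition of \rref{Definition}{def:qsol} into the ``intersects in a subspace with totally isotropic perpendicular an even number of times'' condition of \rref{Definition}{def:qsosps}. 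The minimum distance carries over verbatim, since both definitions count dependent points in distinct $\pi_i$ with the same hyperplane condition when $k>0$.

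For the reverse direction, given a quantum set of $n$ symplectic polar spaces with minimum distance $d$, I would choose within each $\pi_i$ a collection of $h$ lines $\{\ell_1,\dots,\ell_h\}$ with $\ell_j^\perp=\langle\ell_m\mid m\neq j\rangle$; existence follows from iterated application of \rref{Lemma}{lemma:witt}, as explicitly observed in the remark immediately after its statement. \rref{Theorem}{thm:qsosps} applied in the other direction shows the resulting object is a quantum set of $n$ sets of $h$ lines with minimum distance $d$, and \rref{Theorem}{thm:maingeom} returns an $[\![n,k,d]\!]_{2^h}$ code. For the ``moreover'' part, \rref{Theorem}{thm:qsohlequiv} characterises code equivalence as projective equivalence together with a symplectic transformation of each $\pi_i$ with respect to $f_{\{\ell_1,\dots,\ell_h\}}$; but symplectic transformations preserve the associated polarity by definition, and by the extended Witt statement they act transitively on the admissible $h$-frames of a given polar space. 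Consequently, passing to the level of symplectic polar spaces absorbs exactly the residual symplectic-transformation freedom in \rref{Theorem}{thm:qsohlequiv}, so code equivalence becomes pure projective equivalence.

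The main obstacle is that the forward map forgets the specific unordered set $\{\ell_1,\dots,\ell_h\}$ in each $\pi_i$ and only remembers the polarity it determines, while the reverse map requires a choice of such an $h$-frame; so naively the two procedures are not inverse to each other on the nose. The transitivity content of \rref{Lemma}{lemma:witt}, combined with \rref{Theorem}{thm:qsohlequiv}, resolves this: different admissible choices of $h$-frames in the reverse direction yield equivalent codes, and conversely equivalent codes yield the same quantum set of symplectic polar spaces. In other words, the bijection and the equivalence statement must be proved simultaneously, with Witt transitivity doing the bookkeeping that matches them.
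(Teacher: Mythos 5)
Your proposal is correct and follows essentially the same route as the paper, whose proof of this theorem is literally the one-line citation of \rref{Theorem}{thm:maingeom}, \rref{Theorem}{thm:qsohlequiv} and \rref{Theorem}{thm:qsosps}. Your additional observation --- that the forward map forgets the $h$-frame while the reverse map must choose one, and that Witt transitivity (the remark after \rref{Lemma}{lemma:witt}) together with \rref{Theorem}{thm:qsohlequiv} is exactly what makes this well defined and forces the bijection and the equivalence statement to be established together --- is a genuine detail the paper leaves implicit, but it does not change the approach.
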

\begin{proof}
    This follows from \rref{Theorem}{thm:maingeom}, \rref{Theorem}{thm:qsohlequiv} and \rref{Theorem}{thm:qsosps}.
\end{proof}

If \(h=1\), a symplectic polar space of rank \(h\) is a line and we get \rref{Theorem}{thm:quantumlines} (a symplectic transformation of a line is a projective transformation of the line).

Note that, if a code is pure, then the minimum distance of the corresponding set of symplectic polar spaces is minimum size of a set of dependent points in distinct \((2h-1)\)-spaces of \(X\).



\section{Applications}\label{sec:applications}

In this section, we 
prove the uniqueness of a \([\![4,0,3]\!]_4\) code and the nonexistence of a \([\![7,1,4]\!]_4\) code and an \([\![8,0,5]\!]_4\) code.

Note that, as stabiliser codes over \(\ZZ/4\ZZ\) (see \rref{Remark}{remark:pauli}), codes with these parameters do not exist, since they would imply the existence of a \([\![4,0,3]\!]_2\), a \([\![7,1,4]\!]_2\) and an \([\![8,0,5]\!]_2\) code, which do not exist \cite{codetables}.

Our approach is the following. By \rref{Theorem}{thm:final}, an \([\![n,n-2(d-1),d]\!]_4\) code is equivalent to a set of \(n\) solids (projective \(3\)-spaces) in \(\PG(4d-5,2)\), each equipped with a symplectic polarity, such that every \(d-1\) solids span the entire space, and such that every codimension two space intersects an even number of solids in a hyperbolic line. Note that the minimum distance being \(d\), is equivalent to every \(d-1\) solids spanning the whole space. We do not have the more elaborate definition of minimum distance because the code is pure, as it meets the quantum Singleton bound in \rref{Theorem}{thm:mds}.

For \(d=3\), we classify sets of \(n\) solids in \(\PG(7,2)\), for \(n\in\{4,5,6\}\). We then look for symplectic polarities with the property that every codimension two subspace intersects an even number of these solids in a hyperbolic line. This allows us to prove that a \([\![4,0,3]\!]_4\) is unique and to determine all \([\![6,2,3]\!]_4\) codes. We observe that it is not feasible computationally for \(d=4\) and \(n=7\). However, we use a projection argument to rule out the \([\![7,1,4]\!]_4\) code, see \rref{Section}{sec:nonexist}.

\subsection{There is a unique \texorpdfstring{\([\![4,0,3]\!]_4\)}{[[4,0,3]]4} code}

It was proved in \cite{led} that the \([\![4,0,3]\!]_4\) stabiliser code is unique. Here, we reprove this, and demonstrate how our method applies to these parameters.

By \rref{Theorem}{thm:final}, a \([\![4,0,3]\!]_4\) code is equivalent to a set of four solids (projective \(3\)-spaces) in \(\PG(7,2)\), each equipped with a symplectic polarity, such that every two solids span the entire space, and such that every codimension two space intersects an even number of solids in a hyperbolic line.

We first show that there are only three configurations of four solids in \(\PG(7,2)\) such that every two solids span the entire space.
Without loss of generality, the four solids are spanned by the four column blocks of a block matrix of the form
\[\left(\begin{array}{@{}c|c|c|c@{}}
    I&O&I&I\\
    O&I&I&A
\end{array}\right).\]
Indeed, we can first perform row operations (i.e.\ projective transformations) to bring it into the form
\[\left(\begin{array}{@{}c|c|c|c@{}}
    *&*&*&*\\
    *&*&*&*
\end{array}\right)\mapsto\left(\begin{array}{@{}c|c|c|c@{}}
    I&O&*&*\\
    O&I&*&*
\end{array}\right)\]
and then perform column operations within each block (which do not change the span of the columns) to get
\[\mapsto\left(\begin{array}{@{}c|c|c|c@{}}
    I&O&I&I\\
    O&I&*&*
\end{array}\right)\mapsto\left(\begin{array}{@{}c|c|c|c@{}}
    I&O&I&I\\
    O&*&I&*
\end{array}\right)\mapsto\left(\begin{array}{@{}c|c|c|c@{}}
    I&O&I&I\\
    O&I&I&*
\end{array}\right).\]
Now, \(A\) is still determined up to a conjugation \(A\mapsto XAX^{-1}\), as we can do
\[\left(\begin{array}{@{}c|c|c|c@{}}
    I&O&I&I\\
    O&I&I&A
\end{array}\right)\mapsto\left(\begin{array}{@{}c|c|c|c@{}}
    X&O&X&X\\
    O&X&X&XA
\end{array}\right)\mapsto\left(\begin{array}{@{}c|c|c|c@{}}
    I&O&I&I\\
    O&I&I&XAX^{-1}
\end{array}\right)\]
and up to a permutation of the four solids. Together with the condition that every two column blocks form an invertible matrix, one can check by computer that there are only three different configurations (five up to conjugation and three up to conjugation and permutation):
\[A_1=\begin{pmatrix}1&1&0&0\\0&0&1&0\\0&0&0&1\\1&0&0&0\end{pmatrix},\quad A=\begin{pmatrix}0&1&1&0\\1&0&0&0\\0&0&0&1\\0&1&0&0\end{pmatrix}\text{\quad and\quad}A_3=\begin{pmatrix}1&1&0&0\\1&0&0&0\\0&0&1&1\\0&0&1&0\end{pmatrix}.\]

For each of them, we check whether they can be equipped with a symplectic polarity such that every codimension two space intersects an even number of solids in a hyperbolic line. Suppose that this is the case. For each line \(\ell\) in each of the four solids (so \(4\cdot35=140\) lines), let \[x_\ell=\begin{cases}0\text{ if \(\ell\) is totally isotropic}\\1\text{ if \(\ell\) is hyperbolic}\end{cases}.\]
Then for each codimension two space \(\pi\), the sum of the \(x_\ell\)'s for which \(\ell\) is the intersection of \(\pi\) with one of the four solids, equals zero modulo \(2\). So it suffices to check a system of homogeneous equations over \(\FF_2\). This can be done by computer. Given such a solution, one still has to check whether the lines \(\ell\) for which \(x_\ell=1\), really are hyperbolic lines of a symplectic polarity. Now, \(W(3,2)\) contains \(15\) totally isotropic lines and \(20\) hyperbolic lines, so we first of all check that \(|\{\ell\in\pi\mid x_\ell=1\}|=20\). Then, we choose a (potential) hyperbolic line \(\ell_1\) and loop over all (potential) hyperbolic lines \(\ell_2\) to check whether \(\ell_2=\ell_1^\perp\) (we can fix \(\ell_1\) by \rref{Lemma}{lemma:witt}). This is true if and only if every other (potential) hyperbolic line intersects exactly one of them, by \rref{Theorem}{thm:qsosps}. If we find such a line \(\ell_2\), then these lines are indeed the hyperbolic lines of a symplectic polarity (and it is unique by \rref{Lemma}{lemma:lines}).

Implementing these constraints on a computer results in three solutions for \(A_3\) and no solutions for \(A_1\) and \(A_2\). However, the three solutions can be checked to be equivalent up to a projective transformation that fixes the solids.

The code for this can be found on \url{https://github.com/robinsimoens/stabiliser-codes-over-fields-of-even-order}. We used the mathematics software system Sage \cite{sage}.


\subsection{A \texorpdfstring{\([\![7,1,4]\!]_4\)}{[[7,1,4]]4} code does not exist}\label{sec:nonexist}

By \rref{Theorem}{thm:final}, a \([\![7,1,4]\!]_4\) code is equivalent to a set of seven solids (projective \(3\)-spaces) in \(\PG(11,2)\), each equipped with a symplectic polarity, such that every three solids span the entire space, and such that every codimension two space intersects an even number of solids in a hyperbolic line.
(Recall that the minimum distance being four, is equivalent to every three solids spanning the whole space. We do not have the more elaborate definition of minimum distance because the code is pure, as a \([\![7,1,4]\!]_4\) code meets the quantum Singleton bound in \rref{Theorem}{thm:mds}.)

\begin{lemma}\label{lemma:project}
    Let \(\X\) be a quantum set of \(n\) symplectic polar spaces of rank \(h\) in \(\PG(r-1,2)\) with minimum distance \(d\). If we project from one of its spaces, we get a quantum set of \(n-1\) symplectic polar spaces of rank \(h\) in \(\PG(r-2h-1,2)\) with minimum distance \(d-1\) or \(d\).
\end{lemma}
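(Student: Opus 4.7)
I would project from one of the polar spaces, say $\pi_n\in\X$, and verify each clause of the definition of a quantum set of $n-1$ symplectic polar spaces of rank $h$ for the image. Write $\overline{A}$ for the image of a subspace $A\subseteq\PG(r-1,2)$ under the projection with centre $\pi_n$, living in $\PG(r-1,2)/\pi_n\cong\PG(r-2h-1,2)$. First I check that each $\overline{\pi_i}$ (for $i<n$) is a projective $(2h-1)$-space, which amounts to $\pi_i\cap\pi_n=\{0\}$: this holds provided $d\geq 3$, since any common nonzero point would give a dependent pair of points in distinct elements of $\X$, forcing $d\leq 2$ (the stated conclusion $d'\geq d-1$ tacitly assumes $d\geq 3$, otherwise the result is not a quantum set with minimum distance at least $2$). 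Spanning of $\PG(r-2h-1,2)$ by $\overline{\pi_1},\dots,\overline{\pi_{n-1}}$ is immediate from the original spanning, since $\overline{\pi_n}$ is trivial. Each $\overline{\pi_i}$ inherits a symplectic polarity by transport through the projective isomorphism $\pi_i\to\overline{\pi_i}$ induced by the projection.

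Next I verify the quantum property. A codimension two subspace $\overline{\tau}$ of the quotient lifts uniquely to a codimension two subspace $\tau$ of $\PG(r-1,2)$ that contains $\pi_n$. For each $i<n$ the projection restricts to an isomorphism $\tau\cap\pi_i\to\overline{\tau}\cap\overline{\pi_i}$ that intertwines the transported symplectic polarities, so the property ``perpendicular is totally isotropic'' for the intersection transfers from $\pi_i$ to $\overline{\pi_i}$. Applying the quantum property of $\X$ to $\tau$ and accounting for the fixed contribution of $\pi_n$ itself (where $\tau\cap\pi_n=\pi_n$ has trivial and hence totally isotropic perpendicular) then yields the required parity statement for $\overline{\X}$.

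For the minimum distance I lift a dependent set $\{\overline{p_1},\dots,\overline{p_{d'}}\}$ in distinct $\overline{\pi_i}$'s to nonzero $p_i\in\pi_i$; dependence in the quotient becomes $\sum p_i\in\pi_n$. If the sum is zero then $\{p_1,\dots,p_{d'}\}$ is dependent in $\X$ of size $d'$, so $d'\geq d$; otherwise $\sum p_i=q\in\pi_n\setminus\{0\}$ and $\{p_1,\dots,p_{d'},q\}$ is dependent in $\X$ of size $d'+1$, so $d'\geq d-1$. Conversely, a minimum-size dependent set in $\X$ either uses $\pi_n$ (projecting to a dependent set of size $d-1$ in $\overline{\X}$) or does not (projecting to one of size $d$), giving $d'\in\{d-1,d\}$; the case $k>0$ follows the same pattern, with an additional check that the ``no hyperplane'' clause in \rref{Definition}{def:qsosps} transfers through the projection. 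The main obstacle is the quantum-property step, where the parity bookkeeping needs careful handling of how $\pi_n$ itself contributes to the original count whenever $\tau\supseteq\pi_n$.
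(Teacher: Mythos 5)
Your proposal takes the same route as the paper, whose entire proof is ``By definition'': you simply unpack that definition. Most of your verification is sound, and two of your observations are genuinely valuable: the conclusion tacitly requires \(d\geq3\) (so that \(\pi_i\cap\pi_n=\emptyset\) and each image is again a \((2h-1)\)-space), which the paper glosses over; and your lifting argument for the minimum distance (minimal dependent sets sum to zero, so the sum of the lifts lies in \(\pi_n\), giving \(d-1\leq d'\leq d\)) is correct.

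However, the step you flag as ``the main obstacle'' and leave unfinished is the only nontrivial content of the lemma, and it does not resolve the way your sketch suggests. Taking \rref{Definition}{def:qsosps} literally, the bookkeeping fails: for a codimension-two \(\tau\supseteq\pi_n\) one has \(\tau\cap\pi_n=\pi_n\), whose perpendicular \(\{0\}\) is totally isotropic, so \(\pi_n\) \emph{is} one of the elements counted; deleting it from an even total leaves an \emph{odd} count over \(i<n\), the wrong parity. The way to close the gap is to check the condition on the underlying quantum set of \(hn\) lines, which is the formulation \rref{Theorem}{thm:qsosps} proves equivalent and the one actually used in \rref{Section}{sec:applications} (``intersects an even number of solids in a \emph{hyperbolic} line'', i.e.\ in a subspace whose perpendicular is \emph{not} totally isotropic; note that for odd \(n\) this differs from the literal wording of \rref{Definition}{def:qsosps} by a global parity flip). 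With that convention the argument goes through: the lift \(\tau\) contains all \(h\) lines of \(\pi_n\), hence is skew to none of them, so the number of lines among the remaining \(n-1\) blocks skew to \(\tau\) equals the total number of skew lines, which is even; this transfers to the quotient via the isomorphisms \(\tau\cap\pi_i\to\overline{\tau}\cap\overline{\pi_i}\) you describe (which hold by the modular law since \(\pi_n\subseteq\tau\) and \(\pi_i\cap\pi_n=\emptyset\)). Equivalently, \(\pi_n\) contributes zero to the count of elements met in a subspace with non-totally-isotropic perpendicular, so deleting it preserves the parity. Without making this choice of convention explicit, the parity claim in your second paragraph is not established.
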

\begin{proof}
    By definition.
\end{proof}

In particular, if there exists an \([\![n,k,d]\!]_{2^h}\) code, then there also exists an \([\![n-1,k+1,d-1]\!]_{2^h}\) code. For this reason, we first generate all \([\![6,2,3]\!]_4\) codes. We do this in a similar way as in the previous section. We may assume that the six solids are spanned by the column blocks of a block matrix of the form
\[\left(\begin{array}{@{}c|c|c|c|c|c@{}}
    I&O&I&I&I&I\\
    \hline
    O&I&I&A&B&C
\end{array}\right).\]
A computer program tells us that, up to conjugation and permutation, there are \(341\) configurations of six solids in \(\PG(7,2)\) such that every two solids span the entire space. This was done by first determining all \(A\)-blocks up to conjugation and permutation, then all \(B\)-blocks given these \(A\)-blocks, and then the \(C\)-blocks.

Then, we equip each solid with a symplectic polarity such that every codimension two space is skew to an even number of hyperbolic lines. In a similar way as for the previous example, we find 1311 solutions for this. So there are at most 1311 different \([\![6,2,3]\!]_4\) codes.

Now, to rule out the existence of a \([\![7,1,4]\!]_4\) code, we may assume that its solids are spanned by a matrix of the form
\[\left(\begin{array}{@{}c|c|c|c|c|c|c@{}}
    X_0&O&O&Y&A_0&B_0&C_0\\
    \hline
    O&X_1&O&Y&A_1&B_1&C_1\\
    \hline
    O&O&X_2&Y&A_2&B_2&C_2
\end{array}\right)\]
where \(A_0,A_1,B_0,B_1,C_0\) and \(C_1\) are solutions to the \([\![6,2,3]\!]_4\) codes found before, and such that every two columns span a hyperbolic line and its perpendicular line. In particular, every two rows of
\[\left(\begin{array}{@{}c|c|c|c@{}}
    Y&A_2&B_2&C_2
\end{array}\right)\]
have to be symplectic orthogonal to the rows of the matrix
\[\left(\begin{array}{@{}c|c|c|c@{}}
    Y&A_0&B_0&C_0\\
    \hline
    Y&A_1&B_1&C_1
\end{array}\right)\]
with respect to the symplectic form
\[f(v,w)=v_1w_2+v_2w_1+\cdots+v_{15}w_{16}+v_{16}w_{15}.\]

Moreover, every submatrix of \(3\times3\) blocks is invertible. This implies extra conditions on \(A_2,B_2\) and \(C_2\). 
For every solution for \(A_2,B_2\) and \(C_2\) obtained by computer, we observe that the rows of \(\left(\begin{array}{@{}c|c|c|c@{}}
    Y&A_2&B_2&C_2
\end{array}\right)\) are pairwise symplectic orthogonal. This implies that every two rows of \(X_2\) must be symplectic orthogonal. This is a contradiction, since \(X_2\) has full rank. So there does not exist a \([\![7,1,4]\!]_4\) code.

The code for this can be found on \url{https://github.com/robinsimoens/stabiliser-codes-over-fields-of-even-order}. We used the mathematics software system Sage \cite{sage}. We also wrote the same program in GAP \cite{gap}, using in particular the FinInG package \cite{fining}, to check the validity of the program. It gave the same results.

By \rref{Lemma}{lemma:project}, the nonexistence of a \([\![7,1,4]\!]_4\) code implies the nonexistence of an \([\![8,0,5]\!]_4\) code.

\section{Conclusion}

We showed that every stabiliser code over a field of even order can be seen as a binary stabiliser code. Geometrically, they correspond to quantum sets of symplectic polar spaces, up to projective equivalence. This allowed us to prove the uniqueness of a \([\![4,0,3]\!]_4\) code and the nonexistence of a \([\![7,1,4]\!]_4\) code and an \([\![8,0,5]\!]_4\) code.

We present some open problems and directions for future work:

\begin{enumerate}[(i)]
    \item It is known that one-dimensional binary stabiliser codes can be seen as graphs (hence they are also called \emph{graph states}). Moreover, equivalent codes correspond to graphs up to \emph{local complementation}. The notion of local complementation extends to stabiliser codes over fields of prime order, see \cite{beigi}. What about stabiliser codes over fields of even order? They can be seen as graphs where we group the vertices into \(n\) sets of \(h\) vertices. The question remains what a conjugation with a Clifford operator translates to on these graphs.
    \item We solved \cite[Research Problem~4]{qeccandtheirgeom} in the case when \(q\) is even. Can we do something similar for codes over finite fields of odd characteristic? There are two drawbacks. First, there does not always exist a trace-orthogonal basis in odd characteristic. However, one can circumvent this using a so-called polarisation of the phase space, see \cite{gross,heinrich}. Second, it has not been established what these codes correspond to geometrically. Multiplying a column with \(-1\) does not affect the projective geometry, but it does not come from a conjugation with a Clifford operator. This suggests that, instead of looking at the projective points determined by the columns, we should rather look at them as affine points.
    \item We can do something similar for stabiliser codes over modular rings, reducing them to binary codes with certain properties. 
\end{enumerate}

\subsection*{Acknowledgements}
We would like to thank the referees, Markus Grassl and Markus Heinrich for their useful comments.




\newpage

\appendix
\renewcommand{\thesection}{Appendix \Alph{section}}

\section{An algebraic proof of \texorpdfstring{\rref{Theorem}{thm:cliffordequiv}}{Theorem 30}}\label{app:skolem}

\rref{Theorem}{thm:cliffordequiv} was proved in \cite{cliffordequiv} more generally for stabiliser codes over \(\ZZ/d\ZZ\).
In this appendix, we provide an alternative, more algebraic proof instead of a constructive proof.

Let \(\tau\) be the map
\begin{align*}
    \tau&:\P_n
    \to\mathbb{F}_2^{2n}:\\
    &cX(a_1)Z(b_1)\otimes\cdots\otimes X(a_n)Z(b_n)\mapsto(a_1,\dots,a_n,b_1,\dots,b_n).
\end{align*}
as defined in \rref{Definition}{def:tau} (with \(q=2\)).
Recall that a symplectic transformation on \(\FF_2^{2n}\) is a change of basis that preserves the symplectic inner product.

\begin{theorem}[\cite{cliffordequiv}]
    Binary stabilisers are conjugated by a Clifford operator if and only if the corresponding symplectic self-orthogonal codes are the same up to a symplectic transformation.
\end{theorem}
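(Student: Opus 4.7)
The plan is to prove the two directions separately, using the Skolem--Noether theorem for the reverse direction (as the appendix title suggests).

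\emph{Forward direction.} If $\S' = U\S U^\dagger$ for some $U \in \C_n$, then conjugation by $U$ is an automorphism of $\P_n$ that fixes the center $Z(\P_n) = \ker\tau = \{\pm I, \pm iI\}$ setwise, and so descends to a linear automorphism $\sigma$ of $\P_n / Z(\P_n) \cong \FF_2^{2n}$. By \rref{Lemma}{lemma:commute}, the commutation relation on $\P_n$ corresponds exactly to symplectic orthogonality on $\FF_2^{2n}$. Since conjugation preserves commutation, $\sigma$ is a symplectic transformation, and by construction it sends $\tau(\S)$ to $\tau(\S')$.

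\emph{Reverse direction.} Given a symplectic transformation $\sigma$, I would construct a Clifford $U$ inducing $\sigma$ in three steps. First, I would lift $\sigma$ to a group automorphism $\alpha$ of $\P_n$ by picking, for each $i$, Pauli operators $P_i, Q_i$ with $\tau(P_i) = \sigma(e_i)$, $\tau(Q_i) = \sigma(e_{n+i})$, and $P_i^2 = Q_i^2 = I$, and setting $\alpha(X_i) = P_i$, $\alpha(Z_i) = Q_i$, $\alpha(iI) = iI$. Order-$2$ lifts always exist: the operator $E_0 = X(a_1)Z(b_1) \otimes \cdots \otimes X(a_n)Z(b_n)$ with $\tau(E_0) = (a|b)$ satisfies $E_0^2 = (-1)^{a \cdot b} I$, so either $E_0$ or $iE_0$ squares to $I$. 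The defining relations of $\P_n$ (squares, pairwise commutation or anti-commutation) are preserved on the images because $\sigma$ preserves the symplectic form (again by \rref{Lemma}{lemma:commute}), so $\alpha$ extends to a group automorphism.

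Second, since the Pauli operators $\{X(a)Z(b)\}$ form a $\CC$-basis of $M_{2^n}(\CC)$ and $\alpha$ is a group homomorphism fixing central scalars, $\alpha$ extends $\CC$-linearly to an algebra automorphism of $M_{2^n}(\CC)$. Third, Skolem--Noether yields $U \in \mathrm{GL}(\CC^{2^n})$ with $\alpha(\cdot) = U \cdot U^{-1}$; moreover, the identity $\alpha(E^\dagger) = \alpha(E)^\dagger$ on $\P_n$ (both sides equal $\alpha(E^{-1})$) forces $U^\dagger U$ to commute with every Pauli, hence to be a positive scalar, so rescaling yields a unitary $U$. Then $U \in \C_n$ by construction and $U\S U^\dagger$ has code $\sigma(\tau(\S))$.

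The main obstacle is the first step, where one must choose the phases of the lifts $P_i, Q_i$ so that they are genuinely of order $2$; this is what upgrades a mere projective lift of $\sigma$ to a true group automorphism of $\P_n$, and it is handled by the ``multiply by $i$'' trick above. After this, the extension to an algebra automorphism and the invocation of Skolem--Noether together with the unitarization argument are essentially formal.
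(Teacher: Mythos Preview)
Your proof is correct and follows essentially the same route as the paper's appendix: the forward direction via the induced map on $\P_n/Z(\P_n)$ together with \rref{Lemma}{lemma:commute}, and the reverse direction by choosing order-two lifts of $\sigma(e_i)$, $\sigma(e_{n+i})$ to obtain a group automorphism of $\P_n$, extending $\CC$-linearly to an algebra automorphism, applying Skolem--Noether, and rescaling to make $U$ unitary. Your explicit computation $E_0^2=(-1)^{a\cdot b}I$ justifying the ``multiply by $i$'' trick is a nice touch that the paper leaves implicit.
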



\begin{proof}
    Let \(\S_1\) and \(\S_2\) be two binary stabilisers and let \(C_1=\tau(\S_1)\) and \(C_2=\tau(\S_2)\) be the corresponding symplectic self-orthogonal codes.
    
    First suppose that \(\S_1\) and \(\S_2\) are conjugated by any linear operator \(U\), i.e.\ \(\S_2=U\S_1U^{-1}\).
    Define the bijection \(\sigma:C_1\to C_2\) that sends \((a|b)\) to \(\tau(U\tau^{-1}(a|b)U^{-1})\). It is well-defined because conjugation preserves phase factors. It is linear 
    by \rref{Lemma}{lemma:tau}.
    Choose arbitrary \((a|b),(a'|b')\in\mathbb{F}_2^{2n}\). Let \(E_1,E_2\in\mathcal{P}_n\) with \(\tau(E_1)=(a|b)\) and \(\tau(E_2)=(a'|b')\). Then by the proof of \rref{Lemma}{lemma:commute},
    \[E_1E_2=(-1)^{\langle (a|b),(a'|b')\rangle_s}E_2E_1.\]
    On the other hand, \(\sigma(a|b)=\tau(UE_1U^{-1})\) and \(\sigma(a'|b')=\tau(UE_2U^{-1})\), so
    \begin{align*}
        E_1E_2&=U^{-1}\left(UE_1U^{-1}\right)\left(UE_2U^{-1}\right)U\\
        &=U^{-1}(-1)^{\langle\sigma(a|b),\sigma(a'|b')\rangle_s}\left(UE_2U^{-1}\right)\left(UE_1U^{-1}\right)U\\
        &=(-1)^{\langle \sigma(a|b),\sigma(a'|b')\rangle_s}E_2E_1,
    \end{align*}
    so \(\langle (a|b),(a'|b')\rangle_s=\langle \sigma(a|b),\sigma(a'|b')\rangle_s\) and thus, \(\sigma\) is a symplectic transformation.
    
    Conversely, assume that \(C_1\) and \(C_2\) are the same up to a symplectic transformation \(\sigma\). Consider the map \(g:\S_1\to\S_2\) defined by \(g=\tau^{-1}\circ\sigma\circ\tau\). Its image is a class of four stabilisers that are the same up to a phase factor. We therefore define a new map \(h\) for which \(g(E)=[h(E)]\) as follows.
    Denote \(X_i\) resp.\ \(Z_i\) for the weight one Pauli operator with an \(X\) resp.\ \(Z\) in the \(i\)-th position 
    and let \(W=\{ X_1,\dots,X_n,Z_1,\dots,Z_n\}\).
    Note that it spans about half the Pauli group. For every \(X_i\), choose and fix its image \(h(X_i)\) in \(g(X_i)\) such that it has order two again (and similar for the \(Z_i\)'s).
    There is a unique way to extend this map to a homomorphism of the group generated by \(W\), i.e.\ such that \(h(E_1E_2)=h(E_1)h(E_2)\). Indeed, the product relations between the \(X_i\)'s and \(Z_i\)'s are preserved because
    \[h(X_i)^2=h(Z_i)^2=I\]
    and
    \[h(E_1)h(E_2)=(-1)^{\langle \sigma(\tau(E_1)),\sigma(\tau(E_2))\rangle_s}h(E_2)h(E_1)=(-1)^{\langle\tau(E_1),\tau(E_2)\rangle_s}h(E_2)h(E_1)\]
    for all \(E_1,E_2\in W\), where we used that \(\sigma\) preserves the symplectic inner product.
    In particular, \(g(E)=[h(E)]\) for every such operator, since \(g(E_1E_2)=g(E_1)g(E_2)\) by \rref{Lemma}{lemma:tau}. Since \(W\) forms a basis for \(\mathcal{H}\), we can extend \(h\) uniquely to a linear map on \(\mathcal{H}\) that fulfils \(h(E_1+E_2)=h(E_1)+h(E_2)\), which makes \(h\) an algebra morphism. We can now apply the Skolem-Noether theorem to conclude that \(h(E)=UEU^{-1}\) for some \(U\).
    Choose an arbitrary Pauli operator \(E\in\mathcal{P}_n\). Then \(h(E)\) is again a Pauli operator, so \(h(E)h(E)^\dagger=I\). Therefore, \(UU^\dagger\) commutes with \(E\), and because \(E\) was chosen arbitrarily, it commutes with all Pauli operators. Hence, \(UU^\dagger=cI\) for some constant \(c\). Now, \(UU^\dagger\) is positive semidefinite, so \(c\) is real positive and we can replace \(U\) by \(\frac{1}{\sqrt{c}}U\) to obtain the desired Clifford operator.
\end{proof}


Note that the above proof can be generalised easily to stabiliser codes over any prime field \(\FF_p\) or, equivalently, stabiliser codes over \(\ZZ/p\ZZ\) (we refer to \cite{ketkar} and \cite{cliffordequiv} for their respective definitions). 


\vfill
\textsc{\small Simeon Ball}\\[-1mm]
\textsc{\small Department of Mathematics}\\[-1mm]
\textsc{\small Universitat Politècnica de Catalunya}\\[-1mm]
\textsc{\small C. Pau Gargallo 14, 08028 Barcelona, Spain}\\[-1mm]
{\it E-mail address:} {\tt simeon.michael.ball@upc.edu}\\[-1mm]

\textsc{\small Edgar Moreno}\\[-1mm]
\textsc{\small Higher Interdisciplinary Training Center (CFIS)}\\[-1mm]
\textsc{\small Universitat Politècnica de Catalunya}\\[-1mm]
\textsc{\small C. Pau Gargallo 14, 08028 Barcelona, Spain}\\[-1mm]
{\it E-mail address:} {\tt edgar.moreno.martinez@estudiantat.upc.edu}\\[-1mm]

\textsc{\small Robin Simoens}\\[-1mm]
\textsc{\small Department of Mathematics: Analysis, Logic and Discrete Mathematics}\\[-1mm]
\textsc{\small Ghent University}\\[-1mm]
\textsc{\small Krijgslaan 281, 9000 Gent, Belgium}\\[-1mm]
\textsc{\small Department of Mathematics}\\[-1mm]
\textsc{\small Universitat Politècnica de Catalunya}\\[-1mm]
\textsc{\small C. Pau Gargallo 14, 08028 Barcelona, Spain}\\[-1mm]
{\it E-mail address:} {\tt Robin.Simoens@UGent.be}


\begin{thebibliography}{}

\bibitem{ashikminknill}A. Ashikhmin and E. Knill, {\it Nonbinary quantum stabilizer codes}, IEEE Trans. Inf. Theory {\bf 47} (2001) no. 7, 3065--3072.

\bibitem{beigi}
M. Bahramgiri and S. Beigi, {\it Graph states under the action of local Clifford group in non-binary case}, arXiv preprint quant-ph/0610267 (2006).

\bibitem{qeccandtheirgeom}
S. Ball, A. Centelles and F. Huber, {\it Quantum error-correcting codes and their geometries}, Ann. Henri Poincaré D \textbf{10} (2023) no. 2, 337--405.

\bibitem{fining}
J. Bamberg, A. Betten, P. Cara, J. De Beule, M. Lavrauw, M. Neunhoeffer and M. Horn, {\it FinInG, Finite Incidence Geometry, Version 1.5.6} (2023)
(Refereed GAP package), \url{https://gap-packages.github.io/FinInG}.

\bibitem{CRSS97} A.R. Calderbank, E.M. Rains, P.W. Shor and N.J.A. Sloane, {\it Quantum Error Correction and Orthogonal Geometry}, Phys. Rev. Lett. {\bf78} (1997) 405-–409.

\bibitem{CRSS} A.R. Calderbank, E.M. Rains, P.W. Shor, and N. Sloane, {\it Quantum
error correction via codes over \(\mathrm{GF}(4)\)}, IEEE Trans. Inf. Theory {\bf 44} (1998) 1369--1387.

\bibitem{led}
L.E. Danielsen, {\it Graph-based classification of self-dual additive codes over finite fields}, Adv. Math. Commun. {\bf 3} (2009) no. 4, 329--348.

\bibitem{traceortho}
C. Galindo, F. Hernando, R. Matsumoto and D. Ruano, {\it Entanglement-assisted quantum error-correcting codes over arbitrary finite fields}, Quantum Inf. Process. {\bf 18} (2019) no. 116.

\bibitem{gap}
The GAP Group, {\it GAP -- Groups, Algorithms, and Programming, Version 4.12.2} (2022) \url{https://www.gap-system.org}.

\bibitem{GGMG}
D.G. Glynn, T.A. Gulliver, J.G. Maks and M.K. Gupta,
{\it The Geometry of Additive Quantum Codes}, unpublished manuscript (2014) available online at 
\url{https://www.academia.edu/17980449/}.

\bibitem{gottesman96}
D. Gottesman, {\it Class of quantum error-correcting codes saturating the quantum Hamming bound}, Phys. Rev. A {\bf54} (1996), 1862--1868.

\bibitem{gottesman}
D. Gottesman, {\it Stabilizer codes and quantum error correction}, Caltech
PhD Thesis (1997).

\bibitem{codetables}
M. Grassl,
{\it Bounds on the minimum distance of linear codes and quantum codes}, unpublished tables, available online at \url{http://www.codetables.de}.

\bibitem{gross}
D. Gross, K. Audenaert and J. Eisert, {\it Evenly distributed unitaries: On the structure of unitary designs}, J. Math. Phys. {\bf 48} (2007) no. 5.

\bibitem{heinrich}
M. Heinrich, {\it On stabiliser techniques and their application to simulation and certification of quantum devices}, University of Cologne PhD Thesis (2021).

\bibitem{cliffordequiv}
E. Hostens, J. Dehaene and B. De Moor, {\it Stabilizer states and Clifford operations for systems of arbitrary dimensions and modular arithmetic}, Phys. Rev. A {\bf 71} (2005) 042315.

\bibitem{Hubertables}
F. Huber and N. Wyderka, {\it Table of AME states}, unpublished tables, available online at \url{https://www.tp.nt.uni-siegen.de/+fhuber/ame.html}.

\bibitem{LU-LC}
Z. Ji, J. Chen, Z. Wei and M. Ying, {\it The LU-LC conjecture is false}, Quantum Inf. Comput. {\bf 10} (2010) no. 1, 97--108.

\bibitem{ketkar}
A. Ketkar, A. Klappenecker, S. Kumar and P.K. Sarvepalli, {\it Nonbinary Stabilizer Codes Over Finite Fields}, IEEE Trans. Inf. Theory {\bf 52} (2006) no. 11, 4892--4914.


\bibitem{nice error bases}
A Klappenecker and M. Rotteler, {\it Beyond stabilizer codes .I. Nice error bases}, IEEE Trans. Inf. Theory {\bf48} (2002) no. 8, 2392--2395. 

\bibitem{mds}
E.M. Rains, {\it Nonbinary quantum codes}, IEEE
Trans. Inf. Theory {\bf45} (1999) 1827.

\bibitem{sage} The Sage Developers, \emph{SageMath, the Sage Mathematics Software System, Version 10.1} (2023) \url{https://www.sagemath.org}.

\bibitem{selfdual}
G. Seroussi and A. Lempel, {\it Factorization of symmetric matrices and trace-orthogonal bases in finite fields}, SIAM J. Comput. {\bf 9} (1980) no. 4, 758--10.

\bibitem{witt}
D. Sprehn and N. Wahl, {\it Forms over fields and Witt’s lemma}, Math. Scand. {\bf126} (2020) no. 3, 401--423.

\end{thebibliography}
\end{document}